\tikzset{
  norm/.style     = {shape=circle, draw},
  blue/.style     = {shape=circle, draw, fill=blue!25},
  high/.style     = {shape=circle, draw, color=red},
  bluehigh/.style = {shape=circle, draw, color=red, fill=blue!25},
  red/.style      = {shape=circle, draw, fill=red!25},
  both/.style     = {shape=circle, draw, fill=violet!35},
  root/.style     = {node, bottom color=red!30},
  env/.style      = {treenode, font=\ttfamily\normalsize},
  dummy/.style    = {circle}
}
\tikzstyle{standard}=[circle, draw=black, fill=white, very thick, minimum size=7mm]
\tikzstyle{standard2}=[circle, draw=black, fill=white, very thick]
\tikzstyle{blue2}=[circle, draw=black, fill=blue!25, very thick]
\tikzstyle{small}=[circle, draw=black, fill=black, very thick, minimum size=4mm]
\tikzstyle{special}=[circle, draw=red!60, fill=red!5, very thick, minimum size=5mm]
\newtheorem{theorem}{Theorem}[section]
\newtheorem{lemma}[theorem]{Lemma}
\newtheorem{cor}[theorem]{Corollary}
\newtheorem{prop}[theorem]{Proposition}
\theoremstyle{definition}
\newtheorem{df}[theorem]{Definition}
\newtheorem{ex}[theorem]{Example}
\newtheorem{conj}[theorem]{Conjecture}
\DeclareMathOperator{\susp}{susp}
\DeclareMathOperator{\lk}{lk}
\DeclareMathOperator{\st}{st}
\DeclareMathOperator{\del}{del}
\DeclareMathOperator{\bbS}{\mathbb{S}} %Sheila for spheres
\author{Margaret Bayer \\ University of Kansas\\
\and Mark Denker \\ University of Kansas\\
\and Marija Jeli\'c Milutinovi\'c \\ University of Belgrade\\
\and Rowan Rowlands \\ University of Washington\\
\and Sheila Sundaram \\ Pierrepont School\\
\and Lei Xue \\ University of Washington}
\title{Total Cut Complexes of Graphs}
\author[M. Bayer]{Margaret Bayer}
\address{University of Kansas, Lawrence, Kansas, USA, bayer@ku.edu}
\author[M. Denker]{Mark Denker}
\address{University of Kansas, Lawrence, Kansas, USA, mark.denker@ku.edu}
\author[M. Jeli\'c Milutinovi\'c]{Marija Jeli\'c Milutinovi\'c}
\address{University of Belgrade, Serbia, marijaj@matf.bg.ac.rs}
\author[R. Rowlands]{Rowan Rowlands}
\address{University of Washington, Seattle, Washington, USA, rowanr@uw.edu}
\author[S. Sundaram]{Sheila Sundaram}
\address{University of Minnesota, Minneapolis, Minnesota,  USA, shsund@umn.edu}
\author[L. Xue]{Lei Xue}
\address{University of Michigan, Ann Arbor, Michigan, USA, leixue@umich.edu}
\begin{document}    
\subjclass{{57M15, 57Q70, 05C69, 05E45}}

\keywords{Complexes of graphs, chordal graphs, independent sets, homotopy, Morse matching, simplicial vertex, vertex decomposability}

%\maketitle

\begin{comment}
\begin{tabular}{|l|l|}\hline
Statement in results & Statement in results-total \\ \hline
Definition 6.1 & Definition 1.2 \\
Definition 6.2 & Definition 1.1 \\
Theorem 6.3 & Theorem 2.1 \\
Theorem 6.4 & Theorem 3.7 \\
Conjecture 6.5 & Conjecture 3.9 \\
Lemma 6.6 & Lemma 2.3 \\
Lemma 6.7 & Corollary 2.4 \\
Definition 6.8 & Definition 1.6 \\
Lemma 6.9 & Lemma 2.5 \\
Theorem 6.10 & Theorem 3.5 \\
Theorem 6.11 & absorbed into Theorem 2.6 (incomplete) \\
Proposition 6.12 & absorbed into Proposition 2.7 \\
Proposition 6.13 & absorbed into Proposition 2.8 \\
Theorem 6.14 & Theorem 2.6 \\
Proposition 6.15 & Proposition 2.7 \\
Proposition 6.16 & Proposition 2.8 \\
Proposition 6.17 & Proposition 3.1 \\
Corollary 6.18 & absorbed into Proposition 3.1 \\
Theorem 6.19 & Theorem 3.6 \\
Corollary 6.20 & skipped (about $\Delta_2$)\\
\hline
\end{tabular}
\end{comment}

\begin{abstract}
Inspired by work of Fr\"oberg (1990), and Eagon and Reiner (1998), we define the \emph{total $k$-cut complex} of a graph $G$ to be the simplicial complex whose facets are the complements of independent sets of size $k$ in $G$. We study the homotopy types and combinatorial properties of total cut complexes for various families of graphs, including chordal graphs, cycles, bipartite graphs, the prism $K_n \times K_2$, and grid graphs, using techniques from algebraic topology and discrete Morse theory.
\end{abstract}

\maketitle

\section{Introduction}
In recent years, there has been much interest in the topology of simplicial
complexes associated with graphs.  A comprehensive reference is Jonsson's book  \cite{JonssonBook2008}.

  A \emph{graph complex} is a simplicial complex associated to a finite graph $G$. In this paper we introduce a new family of graph complexes which we call \emph{total cut complexes}. Our work is motivated by a famous theorem of Ralf Fr\"oberg connecting commutative algebra and graph theory through topology.  We investigate our new complexes in the spirit of Fr\"oberg's theorem, relating topological properties to structural properties of the graph. 

For a field $\mathbb{K}$ and a finite simplicial complex $\Delta$ with vertex set $[n]$, the Stanley--Reisner ideal of $\Delta$ is the ideal $I_\Delta$  of the polynomial ring $\mathbb{K}[x_1,\ldots , x_n]$ generated by monomials $x_{i_1}\cdots x_{i_k}$, where 
$\{i_1, \ldots , i_k\}$ runs over the inclusion-minimal subsets of $[n]$ which are NOT faces of $\Delta$.  The Stanley--Reisner ring $\mathbb{K}[\Delta]$ is the quotient of the polynomial ring $\mathbb{K}[x_1,\ldots , x_n]$ by the ideal $I_\Delta$. 

For a graph $G$, 
the \emph{clique complex} $\Delta(G)$  is defined to be the simplicial complex whose simplices are subsets of vertices of $G$, in which every pair of vertices is connected by an edge of $G$.  Fr\"oberg \cite{Froberg1990} characterized ideals generated by monomials which have a \emph{$2$-linear resolution}, by first reducing to the case of square-free monomial ideals.    The ideal $I_\Delta$ is generated by quadratic square-free monomials precisely when the simplicial complex $\Delta$ is the clique complex $\Delta(G)$ for some graph $G$ (see \cite[Proposition~8]{EagonReiner1998}).  Hence Fr\"oberg's theorem can be stated as follows:

\begin{theorem}\label{thm:Fr}(Fr\"oberg) (\cite{Froberg1990}, \cite[p. 274]{EagonReiner1998}) A Stanley--Reisner ideal $I_\Delta$ generated by quadratic square-free monomials has a $2$-linear resolution if and only if $\Delta$ is the clique complex $\Delta(G)$ of a chordal graph $G$.
\end{theorem}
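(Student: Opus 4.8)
The plan is to reduce everything to clique complexes and then argue homologically. By the cited result of Eagon and Reiner, the hypothesis that $I_\Delta$ is generated by quadratic square-free monomials already forces $\Delta=\Delta(G)$ for a (unique) graph $G$, namely the $1$-skeleton of $\Delta$; so the minimal non-faces of $\Delta$ are exactly the non-edges of $G$, and $I_\Delta$ is the edge ideal of the complementary graph $\overline G$. Thus it suffices to prove that $\mathbb{K}[\Delta(G)]$ has a $2$-linear resolution if and only if $G$ is chordal. I would make the passage to topology via Hochster's formula, $\beta_{i,j}(\mathbb{K}[\Delta])=\sum_{|W|=j}\dim_{\mathbb{K}}\tilde H_{j-i-1}(\Delta|_W;\mathbb{K})$, where $\Delta|_W$ is the subcomplex induced on $W\subseteq[n]$. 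Since $I_\Delta$ has no linear generators, every vertex is a face, so each nonempty $\Delta|_W$ has vanishing reduced $(-1)$-homology; unwinding the degree bookkeeping, a $2$-linear resolution is equivalent to $\tilde H_m(\Delta|_W;\mathbb{K})=0$ for all $W$ and all $m\ge 1$. As $\Delta(G)|_W=\Delta(G[W])$, what remains is the purely combinatorial--topological statement: every induced subgraph $H$ of $G$ has $\tilde H_m(\Delta(H);\mathbb{K})=0$ for $m\ge 1$ $\iff$ $G$ is chordal.

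For the forward (``chordal'') direction, every induced subgraph of a chordal graph is chordal, so I would just show that the clique complex of any chordal graph is homologically trivial in positive degrees, by induction on the number of vertices using a perfect elimination ordering. If $v$ is a simplicial vertex of $H$ then $N(v)$ is a clique, so $\st_{\Delta(H)}(v)$ is a simplex, $\Delta(H)=\st_{\Delta(H)}(v)\cup\Delta(H\setminus v)$, and the intersection of the two pieces is $\Delta(H[N(v)])$, again a simplex. Mayer--Vietoris then yields $\tilde H_m(\Delta(H);\mathbb{K})\cong\tilde H_m(\Delta(H\setminus v);\mathbb{K})$ for every $m$, and $H\setminus v$ is still chordal, so the induction closes with the one-vertex graph as base case. (Equivalently, one may observe that the clique complex of a connected chordal graph is collapsible, by repeatedly coning off simplicial vertices; either way the conclusion is visibly independent of $\mathbb{K}$.)

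For the reverse (``non-chordal'') direction, a non-chordal $G$ contains an induced cycle $C_\ell$ with $\ell\ge 4$. Taking $W$ to be its vertex set, $\Delta(G)|_W=\Delta(C_\ell)$ contains no triangle, hence equals the $1$-dimensional cycle $C_\ell$, which is homotopy equivalent to $S^1$; so $\tilde H_1(\Delta(G)|_W;\mathbb{K})=\mathbb{K}\ne 0$, and Hochster's formula gives $\beta_{\ell-2,\,\ell}(\mathbb{K}[\Delta])\ne 0$, with $\ell-(\ell-2)=2$. Hence the resolution is not $2$-linear, again over every field.

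The step I expect to be the real content is the ``chordal'' direction: one must make sure the induction is airtight --- that deleting a simplicial vertex leaves a graph that is still chordal (so still has a simplicial vertex), and that both the star and the intersection subcomplex used in Mayer--Vietoris are genuine contractible simplices. The reduction via Eagon--Reiner and Hochster's formula, and the ``non-chordal'' direction with its explicit induced cycle, are essentially bookkeeping once that core argument is in place.
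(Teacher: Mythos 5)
The paper does not prove this theorem: it is stated purely as a cited classical result (Fr\"oberg 1990; Eagon--Reiner), so there is no internal argument to compare yours against. Judged on its own, your proposal is the standard modern proof via Hochster's formula (essentially the one in Herzog--Hibi's treatment of Fr\"oberg's theorem), and it is correct: the reduction to clique complexes, the translation of $2$-linearity into $\tilde H_m(\Delta|_W;\mathbb{K})=0$ for $m\ge 1$, the Mayer--Vietoris induction along a perfect elimination ordering, and the induced-$C_\ell$ obstruction with $\beta_{\ell-2,\ell}\neq 0$ all check out, and the argument is visibly field-independent, matching the Eagon--Reiner equivalences quoted in the paper.

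Two small points to tighten, since you flagged the chordal direction as the place needing care. First, if the simplicial vertex $v$ is isolated, $N(v)=\emptyset$ and the intersection $\st_{\Delta(H)}(v)\cap\Delta(H\setminus v)$ is the empty complex, not a contractible simplex; Mayer--Vietoris then gives $\tilde H_m(\Delta(H))\cong\tilde H_m(\Delta(H\setminus v))$ only for $m\ge 1$, with $\tilde H_0$ jumping. That is all you need, but the claim ``for every $m$'' as written is false in that case, so state the isomorphism only in degrees $m\ge 1$. Second, you are implicitly invoking Dirac's lemma that every (induced subgraph of a) chordal graph has a simplicial vertex, i.e.\ admits a perfect elimination ordering; this is exactly the fact the paper cites from Golumbic before Theorem~\ref{thm:Lei-chordal-totk-vertdecomposable}, so cite it rather than leaving it tacit.
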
 

Define the  \emph{combinatorial Alexander dual} of a simplicial complex $\Delta$ \cite[p.188]{BrunsHerzog1997} on $n$ vertices to be 
\[\Delta^{\vee} \coloneqq \{F\subset [n]: [n]\setminus F\notin \Delta\}.\]  
The $i$th homology of $\Delta$ and the $(n-i-3)$th cohomology of $\Delta^{\vee}$ are isomorphic by Alexander duality in the sphere $\bbS^{n-2}$.

For a graph $G$, write $\Delta_2(G)$ for 
 the  Alexander dual $\Delta(G)^\vee$ of the clique complex $\Delta(G)$.
Observe that the facets of  $\Delta_2(G)$  are the complements of independent sets of size 2 in $G$.

% SheilaRev 2024Jan2: I put the paragraph to precede the theorem, which I modified so that the first 4 items match Eagon-Reiner Prop 8 exactly.  

Eagon and Reiner's reformulation   of Fr\"oberg's theorem 
establishes the following equivalences.

\begin{theorem}\cite[Proposition~8]{EagonReiner1998} The following are equivalent for a graph $G$:
\begin{enumerate}
\item $\Delta_2(G)$ is vertex decomposable.
\item $\Delta_2(G)$ is Cohen-Macaulay over any field $k$.
\item $\Delta_2(G)$ is Cohen-Macaulay over some field $k$.
\item $G$ is chordal.
\end{enumerate}
\end{theorem}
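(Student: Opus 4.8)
The plan is to prove the cycle of implications $(1)\Rightarrow(2)\Rightarrow(3)\Rightarrow(4)\Rightarrow(1)$, with the bulk of the work in $(4)\Rightarrow(1)$, which is purely combinatorial. For $(1)\Rightarrow(2)$ I would cite the standard chain: a vertex decomposable complex is shellable (Provan--Billera), and a shellable complex is Cohen--Macaulay over every field; and $(2)\Rightarrow(3)$ is immediate.

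For $(3)\Rightarrow(4)$ I would invoke the Alexander duality theorem of Eagon and Reiner: for any field $k$, the ideal $I_\Delta$ has a linear resolution over $k$ if and only if $\Delta^{\vee}$ is Cohen--Macaulay over $k$. Since Alexander duality is an involution, $(\Delta_2(G))^{\vee}=\Delta(G)$, so the relevant ideal is $I_{\Delta(G)}$, and since $I_{\Delta(G)}$ is generated in degree $2$ a linear resolution is automatically $2$-linear. Thus if $\Delta_2(G)$ is Cohen--Macaulay over some field $k$, then $I_{\Delta(G)}$ has a $2$-linear resolution over $k$, and Fr\"oberg's Theorem~\ref{thm:Fr}, applied over $k$, forces $G$ to be chordal.

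For $(4)\Rightarrow(1)$ I would induct on $n=|V(G)|$, using that a chordal graph has a simplicial vertex and that induced subgraphs of chordal graphs are chordal. If $G$ is complete then $\Delta_2(G)$ is void and there is nothing to prove, so assume $G$ has a non-edge and fix a simplicial vertex $v$. The computational heart is the pair of standard Alexander-dual identities $\del_{\Delta^{\vee}}(v)=(\lk_{\Delta}(v))^{\vee}$ and $\lk_{\Delta^{\vee}}(v)=(\del_{\Delta}(v))^{\vee}$, with duals taken on $[n]\setminus\{v\}$, applied to $\Delta=\Delta(G)$. Since $\del_{\Delta(G)}(v)=\Delta(G-v)$, the second identity yields $\lk_{\Delta_2(G)}(v)=\Delta_2(G-v)$, which is vertex decomposable by the inductive hypothesis. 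Since $v$ is simplicial, $\lk_{\Delta(G)}(v)$ is a full simplex on $N(v)$, so the first identity exhibits $\del_{\Delta_2(G)}(v)$ as the Alexander dual of a simplex, namely a join of the boundary of a simplex with a simplex (in degenerate cases just a simplex); this is vertex decomposable, and it is nondegenerate because a simplicial vertex adjacent to all of $V(G)\setminus\{v\}$ would make $G$ complete. Finally the shedding condition holds for dimension reasons: $\Delta_2(G)$ is pure of dimension $n-3$, while every facet of $\del_{\Delta_2(G)}(v)$ has $n-2$ vertices, so adjoining $v$ produces an $(n-1)$-element set, which cannot be a face of $\Delta_2(G)$; hence no facet of $\del_{\Delta_2(G)}(v)$ lies in $\lk_{\Delta_2(G)}(v)$, and $v$ is a shedding vertex. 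The one configuration needing separate handling is when $v$ is not even a vertex of $\Delta_2(G)$ --- equivalently, every non-edge of $G$ is incident to $v$ --- in which case $G-v$ is complete and one checks directly that $\Delta_2(G)$ is a simplex or a join of a simplex boundary with a simplex, hence vertex decomposable.

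I expect the main obstacle to be bookkeeping rather than a genuinely hard idea: pinning down the Alexander-dual link and deletion formulas with the correct ambient vertex set and the correct conventions for the void and irrelevant complexes, recording that joins preserve vertex decomposability and that simplices and boundaries of simplices are vertex decomposable, and cleanly isolating the degenerate case in which the chosen simplicial vertex does not appear in $\Delta_2(G)$. Once those pieces are in place the induction closes with essentially no topological computation; the only external input is the one direction of Fr\"oberg's theorem used in $(3)\Rightarrow(4)$, together with the Eagon--Reiner duality theorem.
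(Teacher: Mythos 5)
The paper does not prove this statement; it is quoted verbatim from Eagon--Reiner \cite[Proposition~8]{EagonReiner1998}, so there is no internal proof to compare against. Your proposal is correct and is essentially a reconstruction of the original Eagon--Reiner argument: $(3)\Rightarrow(4)$ via their Alexander-duality theorem (linear resolution of $I_\Delta$ $\iff$ $\Delta^\vee$ Cohen--Macaulay) combined with Fr\"oberg's theorem, and $(4)\Rightarrow(1)$ by induction on a simplicial vertex using the dual link/deletion identities $\lk_{\Delta^{\vee}}(v)=(\del_{\Delta}(v))^{\vee}$ and $\del_{\Delta^{\vee}}(v)=(\lk_{\Delta}(v))^{\vee}$; your handling of the degenerate cases (complete $G$, and $v\notin\Delta_2(G)$) is the right bookkeeping.
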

To these equivalences one can add a fifth item, namely that  
$\Delta_2(G)$ is shellable, since shellability lies between  vertex decomposability and Cohen-Macaulayness.
(See, for example, 
\cite[Section 11]{BjTopMeth1995}.)

Inspired by this theorem, we introduce the following two  generalisations of the simplicial complex $\Delta_2(G)$.  Let $k\ge 1$.  Define 
\begin{enumerate}
\item a complex whose facets are complements of independent sets of size $k$ in $G$;  we call this the total $k$-cut complex of $G$, denoted $\Delta^t_k(G)$ (this paper);
\item  a complex whose facets are complements of sets  $F$ of size $k$ in $G$ such that the induced subgraph of $G$ on  the vertex set $F$ is disconnected; we call this the $k$-cut complex of $G$, and denote it by 
$\Delta_k(G)$ \cite{BDJRSX}.
\end{enumerate}

The subject of this paper is the first generalisation, the total cut complex $\Delta^t_k(G)$.  We examine its topology   and consider how it is affected by properties of the graph $G$.  In particular we add to the Eagon-Reiner equivalences of Fr\"oberg's theorem above by showing (see Theorem~\ref{thm:Lei-chordal-totk-vertdecomposable}):
\begin{theorem}
$\Delta_k^t(G)$ is vertex decomposable for \emph{all} $k$  $\iff$ $G$ is a chordal graph.
\end{theorem}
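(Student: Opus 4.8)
The plan is to handle the two directions separately, with essentially all the work in one of them. The backward implication is almost free: if $\Delta_k^t(G)$ is vertex decomposable for every $k$, then in particular $\Delta_2^t(G)$ is, and since $\Delta_2^t(G)=\Delta_2(G)$ — both have as facets the complements of the size-$2$ independent sets of $G$ — the Eagon--Reiner reformulation of Fr\"oberg's theorem quoted above forces $G$ to be chordal. So the substance is the forward implication: if $G$ is chordal, then $\Delta_k^t(G)$ is vertex decomposable for all $k\ge 1$. I would prove this by induction on $n=|V(G)|$, treating all $k$ at once so that the inductive hypothesis is available for $\Delta_{k-1}^t$ of smaller graphs. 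The degenerate cases are immediate: $n\le 1$; the case $k=1$, where $\Delta_1^t(G)$ is the boundary of a simplex and hence vertex decomposable; and the case $k>\alpha(G)$, where $\Delta_k^t(G)$ is void. So assume $2\le k\le\alpha(G)$ and $n\ge 2$.

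The engine is a structural lemma. Since $G$ is chordal it has a simplicial vertex $v$; since $N(v)$ is a clique, any independent set of $G$ meets $N(v)$ in at most one vertex, and from this one computes by a direct double inclusion that, writing $\langle S\rangle$ for the full simplex on a vertex set $S$,
\[
  \lk_{\Delta_k^t(G)}(v)=\Delta_k^t(G-v)
  \qquad\text{and}\qquad
  \del_{\Delta_k^t(G)}(v)=\langle N(v)\rangle \ast \Delta_{k-1}^t(G-N[v]).
\]
The link identity is routine; the deletion identity is the delicate one. If $\{v\}$ is not a vertex of $\Delta_k^t(G)$ at all — which happens exactly when $\alpha(G-v)<k$ — then $\Delta_k^t(G)$ equals this deletion, and since $G-N[v]$ is an induced, hence chordal, subgraph with fewer than $n$ vertices, the inductive hypothesis together with the standard facts that a simplex is vertex decomposable and a join of vertex decomposable complexes is vertex decomposable completes this case.

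Otherwise $\{v\}$ is a vertex of $\Delta_k^t(G)$, and I claim $v$ is a shedding vertex. Its link $\Delta_k^t(G-v)$ and its deletion $\langle N(v)\rangle\ast\Delta_{k-1}^t(G-N[v])$ are vertex decomposable for exactly the reasons above ($G-v$ and $G-N[v]$ are chordal with fewer vertices; invoke the join and simplex facts). For the shedding condition I would check that every facet of the deletion is a facet of $\Delta_k^t(G)$: by the lemma such a facet is $N(v)\cup\bigl((V(G)\setminus N[v])\setminus J\bigr)=V(G)\setminus(\{v\}\cup J)$ where $J$ is an independent set of size $k-1$ in $G-N[v]$; because $J$ avoids $N[v]$, the set $\{v\}\cup J$ is an independent set of size $k$ in $G$, so its complement is a facet of $\Delta_k^t(G)$. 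By the recursive definition of vertex decomposability, the induction is complete.

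The step I expect to be the main obstacle is the deletion identity $\del_{\Delta_k^t(G)}(v)=\langle N(v)\rangle\ast\Delta_{k-1}^t(G-N[v])$. The inclusion $\supseteq$ is straightforward, but for $\subseteq$ one must rule out that the deletion picks up faces arising from independent sets of size $k$ that avoid $v$ and are not already captured by the join; this is exactly where simpliciality is essential, since such an independent set meets the clique $N(v)$ in at most one vertex, which can be traded for $v$. A secondary point to treat carefully is the bookkeeping for the case in which $v$ fails to be a vertex of the complex, so that it cannot serve as a shedding vertex but the deletion identity still exhibits $\Delta_k^t(G)$ as a join. Everything else — the link identity, the shedding verification, the base cases, and the behaviour of vertex decomposability under joins with a simplex — should be routine.
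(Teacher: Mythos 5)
Your proposal is correct and follows essentially the same route as the paper: induct using a simplicial vertex $v$, identify $\lk_{\Delta_k^t(G)}(v)$ with $\Delta_k^t(G-v)$, and identify $\del_{\Delta_k^t(G)}(v)$ with a cone-like complex built from $\Delta_{k-1}^t$ of a smaller chordal graph, then invoke preservation of vertex decomposability under joins/links; your explicit join $\langle N(v)\rangle \ast \Delta_{k-1}^t(G-N[v])$ is exactly the paper's $\st_{\Delta_{k-1}^t(G\setminus v)}N(v)$ unwound via $\st(\sigma)=\sigma\ast\lk(\sigma)$, and the "delicate $\subseteq$ direction" you flag is precisely the paper's purity argument for the deletion. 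If anything, your write-up is slightly more careful than the paper's in explicitly checking the dimension condition on the deletion and in handling the case where $v$ fails to be a vertex of the complex.
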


We also show that for many families of graphs,  the homotopy type of $\Delta^t_k(G)$ is of interest in its own right, often a wedge of spheres of equal dimension.

In the  companion \cite{BDJRSX} to this  paper we make a similar investigation of  the cut complex $\Delta_k(G)$.  

\section{Definitions}

All our graphs will be simple, that is, without loops 
or multiple
edges.  (Terminology follows \cite{WestGraphTheory1996}.)
\begin{df}
Let $G = (V,E)$ be a graph.

A set $S\subseteq V$ is an {\em independent set} if and only if 
no pair of vertices in $S$ forms an edge of~$G$. Note that every set of size one is independent.

The {\em independence number} $\alpha(G)$ of $G$ is the cardinality of 
a maximum independent set in~$G$.
\end{df}
\begin{df}\label{def:simplicial-complex}  A simplicial complex $\Delta$ on a set $A$ is a collection of subsets of $A$ such that 
\[\sigma\in \Delta \text{ and } \tau\subseteq \sigma \Rightarrow \tau \in \Delta.\] 
\end{df}
The elements of $\Delta$ are its \emph{faces} or \emph{simplices}. 
If the collection of subsets is empty, i.e., \ $\Delta$ has no faces,  we call $\Delta$ the void complex.  Otherwise $\Delta$ always contains the empty set as a face. 
The \emph{dimension} of a face $\sigma$, $\dim(\sigma)$, is one less than its cardinality; thus the dimension of the empty face is $(-1)$, and the 0-dimensional faces are the \emph{vertices} of $\Delta$. A \emph{$d$-face} or \emph{$d$-simplex} is a face of dimension $d$.
A {\em facet} of a simplicial complex is a maximal face. 

The \emph{join} of two simplicial complexes $\Delta_1$ and $\Delta_2$ with disjoint vertex sets is the complex \begin{center}{$\Delta_1 * \Delta_2= \{\sigma\cup \tau: \sigma\in \Delta_1, \tau\in \Delta_2\}.$}\end{center}

The \emph{cone} $\mathcal{C}_v(\Delta)$, with cone point $v$, over $\Delta$, and the \emph{suspension} of $\Delta$ are the complexes 
\begin{center}{$\mathcal{C}_v(\Delta)=\Delta* \Gamma_1, \ 
\mathrm{susp}(\Delta)=\Delta*\Gamma_2,$}\end{center}
where $\Gamma_1$ is the 0-dimensional simplicial complex $\{v\}$ with one vertex $v\notin \Delta$, and $\Gamma_2$ is the 0-dimensional complex with two vertices $u,v\notin \Delta$. 

The complexes we
will be considering are {\em pure}, meaning that all facets are of the same dimension, called the dimension of the simplicial complex. A {\em ridge} of a  pure simplicial complex is a face of dimension  one lower than the dimension of the complex.

The \emph{wedge} of two topological spaces $X$ and $Y$ with distinguished points $x_0\in X$, $y_0\in Y$, is the quotient of the disjoint union of $X$ and $Y$ obtained by identifying $x_0$ and $y_0$ to a single point.
As in \cite{Hatcher2002}, we write $X\bigvee Y$ for the {wedge} of the spaces $X,Y$, $\bbS^d$ for the $d$-dimensional sphere, and $\bigvee_m \bbS^d$ for a wedge of $m$ $d$-dimensional spheres. We use $\simeq$ to denote homotopy equivalence, and $\cong$ for homeomorphism of spaces and isomorphism of homology.

\begin{df}\label{def:tot-cut-cplx} Let $G$ be a graph and $k\ge 1$. 
The {\em total $k$-cut complex} $\Delta_k^t(G)$ is the simplicial complex whose 
facets are the complements of independent sets of size $k$.
(When $k$ is understood, we refer to the simplicial complex as the ``total cut complex''.) %
Equivalently, $\sigma$ is a face of $\Delta^t_k(G)$ if and only if the complement of $\sigma$ contains an independent set of size $k$.

An alternative viewpoint is that the facets of $\Delta_k^t(G)$ are the vertex covers of size $n-k$, i.e., sets of vertices that contain at least one endpoint of every edge.

Note that the vertices of $\Delta^t_k(G)$ are a (possibly proper) subset of the vertices of the graph $G$.
When $G$ has $n$ vertices and $k=1$,  the facets of the total cut complex are all the size $(n-1)$ subsets of the vertex set of $G$, and hence $\Delta_1^t(G)$ is simply the boundary of an $(n-1)$-simplex.  It therefore has the homotopy type of a single sphere in dimension $(n-2)$. 
\end{df}

In what follows we generally identify faces of the simplicial complex with
the corresponding vertex sets in the graph.

Note: if $k>\alpha(G)$, then $\Delta_k^t(G)$ is the void complex (the complex with no faces, not even the empty set), and if 
$G$ has $n$ vertices and 
$k\le \alpha(G)$, then $\Delta_k^t(G)$ has dimension $n-k-1$.  

In \cite{BDJRSX} we study the {\em $k$-cut complex} $\Delta_k(G)$
of a graph $G$.  
This is the simplicial complex whose facets are the vertex sets of size
$n-k$ whose removal leaves a disconnected subgraph. Note that for $k\ge 2$, $\Delta_k^t(G)$ is a subcomplex of the $k$-cut complex $\Delta_k(G)$.
Also, for $k=2$ the total cut complex $\Delta_2^t(G)$ coincides with the cut 
complex $\Delta_2(G)$.

The set of $k$-cut complexes  is much broader than the set of 
total $k$-cut complexes.  
In fact, it is shown in \cite{BDJRSX} that every pure simplicial complex is
a $k$-cut complex of some graph $G$ for some $k\ge 2$. This is not true for total cut complexes.
For example,
suppose the total cut complex $\Delta_3^t(G)$ of a graph $G$ with vertex set $\{1, \dotsc, 6\}$ has facets $\{1,2,3\}$, $\{1,3,4\}$ and $\{1,5,6\}$.
We claim that $\{1,3,6\}$ must be a facet as well. 
To see this, note that because $\{1,5,6\}$ is a facet, 
$24$ is not an edge of $G$;
because $\{1,3,4\}$ is a facet, $25$ is not an edge of $G$;
and because $\{1,2,3\}$ is a facet, $45$ is not an edge
of $G$.  So $\{2,4,5\}$ is an independent set of $G$, and $\{1,3,6\}$ is a facet of $\Delta_3^t(G)$. 

We are particularly interested in the topological properties of total cut
complexes.  These include shellability, vertex decomposability,
homotopy type and homology.  A good reference for definitions and basic
theorems is \cite{Koz2008}.

\begin{df}[{\cite[Chapter III, Section 2]{RPSCCA1996}}] \label{def:shelling} 
An ordering $F_1,F_2,\dots,F_t$ of the facets of a simplicial complex $\Delta$ 
is a \emph{shelling} if, for every $j$ with $1<j\leq t$,
$$\left( \bigcup_{i=1}^{j-1}\langle F_i\rangle\right) \cap \langle 
F_j\rangle$$
is a simplicial complex whose facets all have cardinality $|F_j|-1$, where 
$\langle F_i\rangle$ is the simplex generated by the face. 
If $\Delta$ has a shelling, $\Delta$ is called \emph{shellable}.
\end{df}

Note that the complex consisting only of the empty set is shellable, as are all 0-dimensional complexes.  To simplify some statements, we will use the convention that the void complex is shellable.

\begin{theorem}[{\cite[Theorem~1.3]{Bj-AIM1984}}] \label{thm:shell-implies-homotopytype}
A pure shellable simplicial complex of dimension $d$ is either contractible or it has the homotopy type of a wedge of spheres, all of dimension $d$.
\end{theorem}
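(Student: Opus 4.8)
The plan is to prove the statement by induction on the number $t$ of facets, building $\Delta$ from the empty complex by adjoining facets one at a time in the shelling order $F_1,\dots,F_t$. Write $\Delta_j := \bigcup_{i=1}^{j}\langle F_i\rangle$, so $\Delta_1=\langle F_1\rangle$ is a single $d$-simplex (contractible) and $\Delta_t=\Delta$. The sharper statement I would carry through the induction is: $\Delta_j$ is either contractible or homotopy equivalent to $\bigvee_{m_j}\bbS^d$, where $m_j$ is the number of indices $2\le i\le j$ for which $\left(\bigcup_{\ell<i}\langle F_\ell\rangle\right)\cap\langle F_i\rangle$ is the \emph{full} boundary $\partial\langle F_i\rangle$ (the ``homology facets''), with $m_j=0$ read as ``contractible.'' Carrying this sharper form — rather than merely ``a wedge of $d$-spheres'' — is what lets the induction close, because it records the high connectivity of $\Delta_{j-1}$ needed below.

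For the inductive step, set $R_j:=\Delta_{j-1}\cap\langle F_j\rangle$. By Definition~\ref{def:shelling}, $R_j$ is a pure subcomplex of $\partial\langle F_j\rangle$ of dimension $d-1$, hence (for $d\ge 1$) the subcomplex generated by a nonempty set of ridges of the simplex $F_j$. There are two cases. If $R_j$ is a \emph{proper} subcomplex of $\partial\langle F_j\rangle$, choose a ridge $\rho_0=F_j\setminus\{v_0\}$ not in $R_j$; then every ridge of $R_j$ contains $v_0$, so $R_j$ is a cone with apex $v_0$, hence contractible. The subcomplex inclusion $R_j\hookrightarrow\langle F_j\rangle$ is then a cofibration and a homotopy equivalence, hence a deformation retract, and this retraction (extended by the identity on $\Delta_{j-1}$) exhibits $\Delta_j=\Delta_{j-1}\cup_{R_j}\langle F_j\rangle$ as deformation retracting onto $\Delta_{j-1}$; thus $\Delta_j\simeq\Delta_{j-1}$ and $m_j=m_{j-1}$. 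If instead $R_j=\partial\langle F_j\rangle\cong\bbS^{d-1}$, then $\Delta_j$ is obtained from $\Delta_{j-1}$ by attaching a single $d$-cell along the inclusion $\bbS^{d-1}\hookrightarrow\Delta_{j-1}$. By the inductive hypothesis $\Delta_{j-1}$ is $(d-1)$-connected (a wedge of $d$-spheres, or contractible), so this attaching map is null-homotopic, and attaching a $d$-cell along a null-homotopic map yields $\Delta_j\simeq\Delta_{j-1}\vee\bbS^d$; hence $m_j=m_{j-1}+1$ and $\Delta_j\simeq\bigvee_{m_j}\bbS^d$. Setting $j=t$ gives the theorem.

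The low-dimensional cases need a separate but easy check. When $d=0$, $\partial\langle F_j\rangle$ is the empty complex, the shelling condition forces $R_j$ to be empty, and adjoining a point along the empty complex is disjoint union, so $\Delta_j=\Delta_{j-1}\sqcup\{\mathrm{pt}\}\simeq\Delta_{j-1}\vee\bbS^0$ — consistent with the formula. When $d=1$, a shellable complex of positive dimension is connected (each $R_j$ contains a vertex, so is nonempty), hence in the ``full boundary'' case the attaching map $\bbS^0\hookrightarrow\Delta_{j-1}$ lands in one path component and attaching a $1$-cell along it again produces a wedge summand $\bbS^1$. The main obstacle in the whole argument is precisely this ``full boundary'' step: one cannot conclude that a wedge-summand sphere appears without first knowing the attaching sphere is null-homotopic, and this is exactly where the inductive hypothesis on connectivity is used; the secondary technical point is the standard fact that a subcomplex inclusion which is a homotopy equivalence is a deformation retract, which should be invoked explicitly. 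An alternative route I would mention but not pursue is to first compute combinatorially that $\widetilde H_*(\Delta)$ is free, concentrated in degree $d$, with rank equal to the number of homology facets, and then invoke simple connectivity (for $d\ge 2$) together with the Hurewicz and Whitehead theorems; the direct build-up above has the advantage of treating $d\le 1$ uniformly and of producing the homotopy equivalence by hand.
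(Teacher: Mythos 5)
The paper does not prove this theorem at all: it is quoted verbatim as a known result of Bj\"orner, with a citation in place of a proof. So there is no ``paper's proof'' to compare against; what you have written is a self-contained proof of the cited result, and it is correct. Your argument is the standard one (essentially Bj\"orner's): walk along the shelling, observe that the intersection $R_j=\Delta_{j-1}\cap\langle F_j\rangle$ is a pure $(d-1)$-dimensional subcomplex of $\partial\langle F_j\rangle$ generated by ridges of $F_j$, and split into the two cases ``proper subcomplex, hence a cone on the apex of any missing ridge, hence a strong deformation retract situation'' versus ``full boundary sphere, hence a $d$-cell attached along a map that is null-homotopic by the inductive connectivity hypothesis.'' You correctly identify the two points where the argument could silently go wrong and address both: the gluing of the deformation retraction of $\langle F_j\rangle$ onto $R_j$ with the identity on $\Delta_{j-1}$ (which needs the retraction to fix $R_j$ pointwise, available for CW pairs whose inclusion is a homotopy equivalence), and the fact that one must know $\Delta_{j-1}$ is $(d-1)$-connected before concluding that the full-boundary case contributes a wedge summand $\bbS^d$ --- this is exactly why the induction must carry the sharper statement rather than just ``wedge of spheres or contractible.'' The $d=0$ and $d=1$ edge cases are handled consistently with the paper's conventions (the paper explicitly treats all $0$-dimensional complexes as shellable). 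I see no gap.
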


\begin{ex}
Let $G$ be the graph in Figure~\ref{small-example}.
\begin{figure}
\begin{center}
\begin{tikzpicture}
\draw (2,1) node[below] {$d$}--
      (1,2) node[above] {$a$}--
      (0,1) node[left] {$b$}--
      (1,0) node[below] {$c$}--
      (2,1)--
      (3,1.5) node[right] {$e$};
\draw (2,1)--(3,0.5) node[right] {$f$};
\filldraw[black] (0,1) circle (2pt);
\filldraw[black] (1,0) circle (2pt);
\filldraw[black] (1,2) circle (2pt);
\filldraw[black] (2,1) circle (2pt);
\filldraw[black] (3,1.5) circle (2pt);
\filldraw[black] (3,0.5) circle (2pt);
\end{tikzpicture}
\end{center}
\caption{A graph $G$ with $\Delta_3^t(G)$ shellable and contractible.\label{small-example}}
\end{figure}
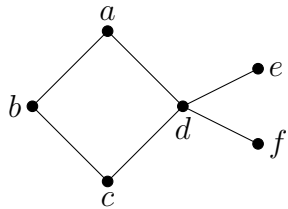

The independent sets of size 3 in $G$ are $\{b,e,f\}$ and all 3-element subsets
of $\{a,c,e,f\}$.  Thus the facets of $\Delta_3^t(G)$ are
$\{a,b,d\}$, $\{a,c,d\}$, $\{b,c,d\}$, $\{b,d,e\}$, and $\{b,d,f\}$.
This ordering of the facets is a shelling order.
The complex is contractible.

\end{ex}

We will be using the following constructions \cite[Section~2.1.2]{Koz2008}.
\begin{df}Let $\Delta$ be a simplicial complex and $\sigma$ a face of $\Delta$.
\begin{itemize}
\item The \emph{link} of $\sigma$ in $\Delta$ is 
    $$\lk_{\Delta} \sigma= \{\tau \in \Delta \mid \text{$\sigma\cap \tau 
      = \emptyset$, and $\sigma\cup \tau \in \Delta$}\}.$$
\item The (closed) \emph{star} of $\sigma$ in $\Delta$ is 
     $$\st_{\Delta} \sigma= \{\tau \in \Delta \mid  
        \sigma \cup \tau \in \Delta\} .$$
\item The \emph{deletion} of $\sigma$ in $\Delta$ is 
      \[\del_{\Delta} \sigma = \{\tau \in \Delta \mid \sigma \not\subseteq
       \tau\}.\]
\end{itemize}
\end{df}
(Note that in the deletion, we are not removing proper faces of $\sigma$.)

For $v$ a vertex and $\sigma$ any face of $\Delta$, we also have the following useful facts (see \cite{Koz2008}):
\begin{equation}\label{eqn:st-del-lk}\Delta=\st_\Delta(v)\cup \del_\Delta(v),\ \lk_\Delta(v)=\st_\Delta(v)\cap \del_\Delta(v) \text{ and } \st(\sigma)=\sigma*\lk(\sigma).\end{equation}
In particular $\st(\sigma)$ is contractible for any face $\sigma$.
%%%

\begin{df}\label{def:vertex-decomposable}
A $d$-dimensional simplicial complex $\Delta$ is called \emph{vertex decomposable} if either $\Delta$ is a simplex, or there is a vertex $v$ in $\Delta$ such that
    \begin{enumerate}[(1)]
        \item both $\lk_{\Delta} v$ and $\del_{\Delta} v$ are vertex decomposable, and 
        \item $\del_{\Delta} v$ is pure and $d$-dimensional.
    \end{enumerate}
\end{df}
Vertex decomposable simplicial complexes were introduced by Provan and Billera in \cite{Provan-Billera}. It is known that vertex decomposability implies shellability (see \cite[Corollary 2.9]{Provan-Billera}).
Shellability is preserved by the operation of taking links of 
faces; see \cite[Proposition~10.14]{BjWachsII1997}, 
\cite[Theorem~3.1.5]{WachsPosetTop2007}.

An important tool used in determining homotopy type of a simplicial complex is
discrete Morse theory.  We include an Appendix, which gives the definitions and results needed.

\section{General properties}

Note first that Definition~\ref{def:tot-cut-cplx}  %
implies the inclusion
\begin{equation} \label{eqn:inclusion}
\Delta^t_{k+1}(G)\subseteq \Delta^t_k(G).
\end{equation}
As in the case of regular cut complexes \cite{BDJRSX}, the faces of the total $(k+1)$-cut complex of a graph $G$ are completely determined by those of the total $k$-cut complex.  %

\begin{theorem}\label{theorem:facets, total cut}
    Let $k\geq 2$, $G=(V,E)$ a graph. Then the facets of $\Delta^t_{k+1}(G)$ are precisely the ridges of $\Delta^t_k(G)$ that are contained in exactly $k+1$ facets.
\end{theorem}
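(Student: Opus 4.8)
The plan is to characterize facets of $\Delta^t_{k+1}(G)$ directly in terms of independent sets, using the defining property that a set $\sigma$ is a face of $\Delta^t_j(G)$ precisely when $V \setminus \sigma$ contains an independent set of size $j$. A facet of $\Delta^t_{k+1}(G)$ has the form $V \setminus I$ where $I$ is an independent set of size $k+1$; write $|V| = n$, so $|V \setminus I| = n - k - 1$. First I would observe that such a set $F = V \setminus I$ is always a ridge of $\Delta^t_k(G)$: indeed for any vertex $v \in I$, the set $F \cup \{v\} = V \setminus (I \setminus \{v\})$ is a facet of $\Delta^t_k(G)$ since $I \setminus \{v\}$ is independent of size $k$; so $F$ has dimension $n - k - 2$, one less than $\dim \Delta^t_k(G) = n - k - 1$, and $F$ lies in at least one facet. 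This shows $F$ is a ridge.

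Next I would count the facets of $\Delta^t_k(G)$ containing a given ridge $F$ of $\Delta^t_k(G)$. The facets of $\Delta^t_k(G)$ containing $F$ are exactly the sets $F \cup \{v\}$ for $v \in V \setminus F$ such that $V \setminus (F \cup \{v\})$ contains an independent set of size $k$. The key step is: if $F = V \setminus I$ with $I$ independent of size $k+1$, then for each $v \in I$ the complement $V \setminus (F \cup \{v\}) = I \setminus \{v\}$ is independent of size $k$, giving $k+1$ facets; and these are the \emph{only} facets containing $F$, because if $w \notin F \cup I$ existed with $F \cup \{w\}$ a facet, then $V \setminus (F \cup \{w\})$ would be an independent set of size $k$ contained in $I$, but $I \setminus \{w\} = I$ has size $k+1$ — contradiction since every $k$-subset of the complement must avoid $w$, yet the complement of $F \cup \{w\}$ is a proper subset of $I$ of the wrong size. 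Hence $F$ is contained in exactly $k+1$ facets.

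For the converse I would take a ridge $G_0$ of $\Delta^t_k(G)$ contained in exactly $k+1$ facets and show $G_0$ is a facet of $\Delta^t_{k+1}(G)$, i.e., that $V \setminus G_0$ contains an independent set of size $k+1$. Here $|V \setminus G_0| = k + 1$, so I must show $V \setminus G_0$ is itself independent. The facets of $\Delta^t_k(G)$ containing $G_0$ correspond to vertices $v \in V \setminus G_0$ such that $(V \setminus G_0) \setminus \{v\}$ — a set of size $k$ — is independent in $G$; there are exactly $k+1$ such $v$, i.e., every vertex of $V \setminus G_0$ has this property. So every $k$-subset of the $(k+1)$-set $V \setminus G_0$ is independent. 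The main obstacle is the short combinatorial lemma that a finite set all of whose $k$-subsets are independent (equivalently, edge-free in $G$) is itself independent when it has size $k+1$: any edge would lie inside some $k$-subset since $k \geq 2$. With $k \geq 2$ this is immediate — an edge has two endpoints, which fit in a $k$-subset of a $(k+1)$-set — so $V \setminus G_0$ is independent of size $k+1$ and $G_0$ is a facet of $\Delta^t_{k+1}(G)$. I expect the only place requiring care is keeping the bookkeeping of complements straight and invoking $k \geq 2$ at exactly the right moment; the topological content is minimal once the independent-set translation is in place.
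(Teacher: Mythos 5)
Your proof is correct and follows essentially the same route as the paper's: translate facets and ridges into statements about independent sets, observe that a facet of $\Delta^t_{k+1}(G)$ is a ridge of $\Delta^t_k(G)$ lying in exactly $k+1$ facets because only the $k+1$ vertices of the complement can be added, and for the converse use $k\ge 2$ to deduce that a $(k+1)$-set all of whose $k$-subsets are independent is itself independent. The only blemish is the muddled justification that there are no further facets containing $F$ in the forward direction: since $V\setminus F=I$ has exactly $k+1$ elements, the hypothetical vertex $w\notin F\cup I$ cannot exist at all, so that passage should just be replaced by this one-line observation.
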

\begin{proof}
If $F=V\setminus S$ is a facet of $\Delta^t_{k+1}(G)$, then $S$ is an 
independent set of $G$ of size $k+1$, and so the $k+1$ subsets of $S$ of size
$k$ are all independent.  
Thus, $V\setminus S$ is a ridge of $\Delta^t_k(G)$ such that for any of the
$k+1$ vertices of $S$, $V\setminus (S\setminus x)$ is a facet of 
$\Delta^t_k(G)$.
Conversely, if $R$ is a ridge of $\Delta^t_k(G)$ contained in exactly
$k+1$ facets of $\Delta^t_k(G)$, then $S=V\setminus R$ contains $k+1$ 
elements, and the facets of $\Delta^t_k(G)$ containing $R$ must be exactly
$R\cup \{x\}$ for $x\in S$.  Thus $S\setminus x=V\setminus (R\cup\{x\})$ is
an independent set for each $x\in S$.  Since $k\ge 2$, every two of these
sets overlap, so the entire set $S$ is independent.  So $R=V\setminus S$ is
a facet of $\Delta^t_{k+1}(G)$.
\end{proof}

We consider how modifications of a graph affect the total cut complex.

\begin{prop}\label{prop:adding-isolated-vertex}
Let $G$ be a graph, $v\not\in V(G)$, and  $G \sqcup v$ the graph consisting of $G$ and the additional isolated vertex $v$. Then
$\Delta_k^t(G\sqcup v) = \Delta^t_{k-1}(G) \cup (\Delta_k^t(G)\ast v)$.
\end{prop}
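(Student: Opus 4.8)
The plan is to prove the set equality of simplicial complexes directly by analyzing which subsets $\sigma \subseteq V(G) \cup \{v\}$ lie in each side, using the characterization from Definition~\ref{def:tot-cut-cplx}: $\sigma$ is a face of $\Delta^t_k(H)$ iff the complement $V(H)\setminus\sigma$ contains an independent set of size $k$. Write $W = V(G)\cup\{v\}$. First I would split into two cases according to whether $v\in\sigma$ or not.

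\emph{Case $v\notin\sigma$.} Then $\sigma\subseteq V(G)$, and the complement of $\sigma$ in $W$ is $(V(G)\setminus\sigma)\cup\{v\}$. Since $v$ is isolated in $G\sqcup v$, it forms an independent set with any independent set of $G$; hence $(V(G)\setminus\sigma)\cup\{v\}$ contains an independent set of size $k$ in $G\sqcup v$ iff $V(G)\setminus\sigma$ contains an independent set of size $k-1$ in $G$ (if it already contains one of size $k$, throw away a vertex to get one of size $k-1$, then adjoin $v$; conversely adjoin $v$ to a size-$(k-1)$ set). So $\sigma\in\Delta^t_k(G\sqcup v)$ iff $\sigma\in\Delta^t_{k-1}(G)$. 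On the other side, $\sigma$ (not containing $v$) lies in $\Delta^t_{k-1}(G)\cup(\Delta^t_k(G)\ast v)$ iff $\sigma\in\Delta^t_{k-1}(G)$ or $\sigma\in\Delta^t_k(G)$; but by the inclusion \eqref{eqn:inclusion}, $\Delta^t_k(G)\subseteq\Delta^t_{k-1}(G)$, so this reduces to $\sigma\in\Delta^t_{k-1}(G)$. The two conditions match.

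\emph{Case $v\in\sigma$.} Write $\sigma = \sigma'\cup\{v\}$ with $\sigma'\subseteq V(G)$. The complement of $\sigma$ in $W$ is $V(G)\setminus\sigma'$, which contains no vertices adjacent to $v$ and sits entirely inside $G$. So $\sigma\in\Delta^t_k(G\sqcup v)$ iff $V(G)\setminus\sigma'$ contains an independent set of size $k$ in $G$, i.e.\ iff $\sigma'\in\Delta^t_k(G)$. On the other side, a face containing $v$ cannot lie in $\Delta^t_{k-1}(G)$ (whose vertices are among those of $G$), so $\sigma\in\Delta^t_{k-1}(G)\cup(\Delta^t_k(G)\ast v)$ iff $\sigma\in\Delta^t_k(G)\ast v$, which by definition of the join (with $v$ a new vertex) means $\sigma'\in\Delta^t_k(G)$. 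Again the conditions match, completing the equality.

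The argument is essentially bookkeeping, so there is no serious obstacle; the one point deserving care is the ``$\pm 1$'' shift in the independent-set size when $v$ is absent from $\sigma$ — one must check both directions of the equivalence, and note the edge behaviour when $k=1$ (where $\Delta^t_0(G)$ should be read as the full simplex on $V(G)$, since every set contains an independent set of size $0$). I would also remark explicitly that the vertex set of $\Delta^t_k(G\sqcup v)$ is all of $W$ exactly when $k\le\alpha(G)+1$, so that the join notation $\Delta^t_k(G)\ast v$ is using the genuinely new vertex $v$, consistent with the paper's convention following Definition~\ref{def:tot-cut-cplx}.
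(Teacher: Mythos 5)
Your proof is correct and takes essentially the same approach as the paper's: a direct case analysis on whether $v$ belongs to the relevant set. The paper phrases this at the level of facets (splitting the independent $k$-sets of $G\sqcup v$ according to whether they contain $v$), whereas you verify the equality face by face, invoking the inclusion $\Delta^t_k(G)\subseteq\Delta^t_{k-1}(G)$ to reconcile the two sides in the case $v\notin\sigma$ --- a minor difference in bookkeeping, not in substance.
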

\begin{proof}
The facets of $\Delta^t_k(G\sqcup v)$ are complements of independent sets of
size $k$ in $G\sqcup v$.  There are two types of such independent sets $S$.

If $v\in S$, then $S\setminus v$ is an independent set of size $k-1$ in
$G$.
Since the complement of $S$ in $G\sqcup v$ equals the complement of $S\setminus v$
in $G$, these facets generate a subcomplex of $\Delta_k(G\sqcup v)$ equal to
$\Delta^t_{k-1}(G)$.

If $v\not\in S$, then $S$ is an independent set of $G$ of size $k$.
The complement of $S$ in $G\sqcup v$ is the union of $\{v\}$ with the
complement of $S$ in $G$.  So these facets generate a subcomplex of
$\Delta_k(G\sqcup v)$ equal to $\Delta^t_k(G)\ast v$.
\end{proof}

\begin{lemma}\label{lem:link of faces, total cut}
Let $k\geq 2$, $G=(V,E)$ be a graph, and $W$ be a face of $\Delta^t_k(G)$. 
Then $\Delta^t_k (G\setminus W) = \lk_{\Delta^t_k(G)} W$.
\end{lemma}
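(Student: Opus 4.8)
The plan is to prove the set equality $\Delta^t_k(G\setminus W) = \lk_{\Delta^t_k(G)} W$ by a direct double-inclusion argument, unwinding the definitions of both sides in terms of independent sets. Write $n=|V|$, so $\Delta^t_k(G)$ has dimension $n-k-1$; note that since $W$ is a face of $\Delta^t_k(G)$, by Definition~\ref{def:tot-cut-cplx} the complement $V\setminus W$ contains an independent set of $G$ of size $k$, which in particular guarantees $\alpha(G\setminus W)\ge k$, so the left-hand side is not the void complex. The key observation is that for a subset $\tau\subseteq V\setminus W$, independent sets of $G\setminus W$ contained in $(V\setminus W)\setminus \tau$ are exactly independent sets of $G$ contained in $V\setminus(W\cup\tau)$, since independence of a subset of $V\setminus W$ in $G\setminus W$ is the same as independence in $G$ (deleting vertices does not create or destroy edges among the remaining vertices).

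First I would take $\tau\in \Delta^t_k(G\setminus W)$. By the equivalent form of Definition~\ref{def:tot-cut-cplx}, this means $\tau\subseteq V\setminus W$ and $(V\setminus W)\setminus\tau$ contains an independent set $S$ of $G\setminus W$ of size $k$. Then $S$ is an independent set of $G$ of size $k$, disjoint from both $W$ and $\tau$, so $V\setminus(W\cup\tau)$ contains $S$; hence $W\cup\tau\in\Delta^t_k(G)$. Since also $\tau\cap W=\emptyset$, we get $\tau\in\lk_{\Delta^t_k(G)} W$. Conversely, given $\tau\in\lk_{\Delta^t_k(G)} W$, by definition $\tau\cap W=\emptyset$ and $W\cup\tau\in\Delta^t_k(G)$, so $V\setminus(W\cup\tau)$ contains an independent set $S$ of $G$ of size $k$; this $S$ avoids $W$, so it is a vertex set of $G\setminus W$, and independence is inherited, so $S$ is an independent set of $G\setminus W$ of size $k$ contained in $(V\setminus W)\setminus\tau$. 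Therefore $\tau\in\Delta^t_k(G\setminus W)$.

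I do not expect a serious obstacle here — the argument is essentially a formal manipulation. The one point requiring a little care is making sure both complexes have the same ground set (the vertices of $G\setminus W$, equivalently $V\setminus W$), and confirming the boundary cases: one should check that the statement behaves correctly when $k>\alpha(G\setminus W)$ cannot in fact occur (as noted above, it cannot, because $W$ being a face forces $\alpha(G\setminus W)\ge k$), and that the hypothesis $k\ge 2$, while stated, is not actually needed for this lemma — though I would keep it for consistency with the surrounding results. A remark worth including is that, combined with the fact that links of vertex decomposable (resp. shellable) complexes are again vertex decomposable (resp. shellable), this lemma is the engine that will let inductive arguments on $G$ reduce to smaller graphs.
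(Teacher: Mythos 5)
Your proof is correct and takes essentially the same route as the paper's: both simply unwind Definition~\ref{def:tot-cut-cplx} and the definition of the link, using the fact that independence in $G\setminus W$ agrees with independence in $G$. The only cosmetic difference is that the paper runs the chain of equivalences at the level of facets, while you argue for arbitrary faces by double inclusion.
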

\begin{proof}
For $F\subseteq V\setminus W$, $F$ is a facet of $\Delta^t_k(G\setminus W)$
if and only if $(V\setminus W)\setminus F=V\setminus (W\cup F)$ is an 
independent set in $G\setminus W$ (and hence of $G$) of size $k$
if and only if $W\cup F$ is a facet of $\Delta^t_k(G)$ if and only if  
$F$ is a facet of $\lk_{\Delta^t_k(G)} W$.
\end{proof}

Note that if $W$ is not a face of $\Delta_k^t$, then $G\setminus W$ contains no set of $k$ independent vertices.  In that case $\Delta_k^t(G\setminus W)$ is the void complex.  Since links of shellable complexes are shellable, we have the following.
\begin{cor}\label{cor:induced subgraphs, total cut}
      Let $k\geq 2$, $G=(V,E)$ a graph, and $W\subseteq V$. If the total $k$-cut complex $\Delta^t_k(G)$ is shellable, so is $\Delta^t_k(G\setminus W)$.
\end{cor}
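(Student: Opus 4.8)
The plan is to reduce the statement to two facts already in hand: the identification of $\Delta^t_k(G\setminus W)$ with a link in $\Delta^t_k(G)$ coming from Lemma~\ref{lem:link of faces, total cut}, and the standard fact that shellability is preserved under taking links. Accordingly I would split into two cases according to whether $W$ is a face of $\Delta^t_k(G)$.

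In the main case, when $W$ is a face of $\Delta^t_k(G)$, Lemma~\ref{lem:link of faces, total cut} gives $\Delta^t_k(G\setminus W)=\lk_{\Delta^t_k(G)}W$ on the nose. Since $\Delta^t_k(G)$ is shellable by hypothesis, and links of shellable complexes are shellable (the fact recalled just before Definition~\ref{def:vertex-decomposable}), we conclude that $\Delta^t_k(G\setminus W)$ is shellable. In the remaining case, when $W$ is not a face of $\Delta^t_k(G)$, the observation following Lemma~\ref{lem:link of faces, total cut} tells us that $G\setminus W$ contains no independent set of size $k$, so $\Delta^t_k(G\setminus W)$ is the void complex, which is shellable by the convention adopted after Definition~\ref{def:shelling}. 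Combining the two cases yields the claim.

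I do not expect a genuine obstacle here: essentially all the work is front-loaded into Lemma~\ref{lem:link of faces, total cut}. The only point that needs a moment's care is making the case distinction clean for an \emph{arbitrary} $W\subseteq V$ — in particular $W$ may contain a vertex of $G$ that is not a vertex of $\Delta^t_k(G)$ — but any such $W$ fails to be a face of $\Delta^t_k(G)$ by downward closure, so it lands in the void-complex case and is handled uniformly there.
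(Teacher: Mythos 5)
Your proposal is correct and follows exactly the paper's route: the paper derives the corollary from Lemma~\ref{lem:link of faces, total cut} together with the preservation of shellability under links, and disposes of the case where $W$ is not a face by noting that $\Delta^t_k(G\setminus W)$ is then the void complex, shellable by convention. Your extra remark about $W$ possibly containing a vertex outside the complex is a harmless clarification that is subsumed by the non-face case.
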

In other words, if the total cut complex of a graph is shellable, then so
is the total cut complex of every induced subgraph.

For $v$ a vertex of a graph $G$, the {\em neighborhood} of $v$ is the set
$N(v) = \{u\in V(G) \mid uv\in E(G)\}$.
A vertex $v$ is a {\em simplicial vertex} of $G$ if and only if $N(v)$ is a 
clique in $G$. % 

\begin{lemma}\label{lem:pure subcomplex avoiding one vertex, total cut}
      Let $k\geq 2$, $G = (V,E)$ a graph, $v$ a simplicial vertex of $G$, $N(v)$ its neighborhood in $G$,  and $\Delta = \Delta^t_k(G)$. %
      Then $\del_{\Delta} v$ is a pure simplicial complex generated by the facets of $\Delta^t_{k-1}(G\setminus v)$ that contain $N(v)$, i.e.,
      \begin{equation*}%
          \del_{\Delta} v = \st_ {\Delta^t_{k-1}(G\setminus v)} N(v). 
      \end{equation*}
\end{lemma}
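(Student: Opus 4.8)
The plan is to unwind both sides as collections of faces and show they consist of the same simplices; because $\st_{\Delta^t_{k-1}(G\setminus v)} N(v)$ is a star over the face $N(v)$ (using that $v$ is simplicial, $N(v)$ really is a face of $\Delta^t_{k-1}(G\setminus v)$ whenever the latter is nonvoid), purity will follow from \eqref{eqn:st-del-lk}, namely $\st(\sigma)=\sigma*\lk(\sigma)$, once the set equality is established. First I would observe that $\del_\Delta v$ is generated by those facets of $\Delta=\Delta^t_k(G)$ not containing $v$; such a facet has the form $V\setminus S$ with $S$ an independent set of size $k$ in $G$, and $v\notin V\setminus S$ means $v\in S$. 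Since $v$ is simplicial, $N(v)$ is a clique, so an independent set $S$ containing $v$ can contain no neighbor of $v$; hence $S\setminus\{v\}$ is an independent set of size $k-1$ in $G\setminus v$ that is moreover disjoint from $N(v)$. Conversely any independent $(k-1)$-set $T$ of $G\setminus v$ disjoint from $N(v)$ yields the independent $k$-set $T\cup\{v\}$ of $G$. So the facets of $\del_\Delta v$ are exactly the sets $V\setminus(T\cup\{v\})=(V\setminus v)\setminus T$ where $T$ ranges over independent $(k-1)$-sets of $G\setminus v$ avoiding $N(v)$.

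Next I would translate the condition ``$T$ avoids $N(v)$'' into a statement about $\Delta^t_{k-1}(G\setminus v)$. A facet of $\Delta^t_{k-1}(G\setminus v)$ is $(V\setminus v)\setminus T$ for $T$ an independent $(k-1)$-set of $G\setminus v$, and $(V\setminus v)\setminus T\supseteq N(v)$ is equivalent to $T\cap N(v)=\emptyset$ (here I use that $N(v)\subseteq V\setminus v$, which holds since $G$ is simple and loopless). Therefore the facets of $\del_\Delta v$ found above are precisely the facets of $\Delta^t_{k-1}(G\setminus v)$ that contain $N(v)$. These facets are exactly the facets of the closed star $\st_{\Delta^t_{k-1}(G\setminus v)}N(v)$, because the facets of a star over a face $\sigma$ are the facets of the ambient complex containing $\sigma$. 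Since the two complexes $\del_\Delta v$ and $\st_{\Delta^t_{k-1}(G\setminus v)}N(v)$ have the same facets, they are equal as simplicial complexes.

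Finally, purity: by \eqref{eqn:st-del-lk} we have $\st_{\Delta^t_{k-1}(G\setminus v)}N(v)=N(v)*\lk_{\Delta^t_{k-1}(G\setminus v)}N(v)$, and by Lemma~\ref{lem:link of faces, total cut} this link equals $\Delta^t_{k-1}((G\setminus v)\setminus N(v))$, which (if nonvoid) is pure of dimension $(|V|-1-|N(v)|)-(k-1)-1$; joining with the simplex on $N(v)$ adds $|N(v)|$ to the dimension, giving a pure complex of dimension $|V|-k-1=\dim\Delta$, as required. I should also dispose of the degenerate case where $\Delta^t_{k-1}(G\setminus v)$ is void, i.e.\ $G\setminus v$ has no independent $(k-1)$-set: then $v$ lies in no independent $k$-set of $G$, so $v$ is in every facet of $\Delta$, whence $\del_\Delta v$ is the void complex, matching the (void) star. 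The main obstacle, though it is minor, is being careful about these boundary cases and about the precise bookkeeping that ``avoiding $N(v)$'' on the graph side corresponds exactly to ``containing $N(v)$'' on the complex side; the simpliciality of $v$ is used exactly once, to force independent sets through $v$ to miss $N(v)$.
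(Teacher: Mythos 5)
Your proposal has a genuine gap at its very first step: you assert, as an ``observation,'' that $\del_{\Delta} v$ is generated by the facets of $\Delta$ not containing $v$. This is false for arbitrary complexes (if $\Delta$ is the full simplex on $\{a,b,v\}$, then $\del_{\Delta}v$ is the edge $\{a,b\}$, which is not a facet of $\Delta$), and in the present setting it is exactly the nontrivial purity claim that the lemma asserts. A face $\tau\in\del_{\Delta}v$ lies in some facet $F=V\setminus S$ of $\Delta$, but if $v\in F$ you must still exhibit a facet of $\Delta$ containing $\tau$ and avoiding $v$; otherwise the maximal faces of $\del_{\Delta}v$ could include sets of the form $F\setminus v$ that are not facets of $\Delta$, and your facet-by-facet comparison then only yields the containment $\st_{\Delta^t_{k-1}(G\setminus v)} N(v)\subseteq \del_{\Delta}v$, not the claimed equality.

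The missing argument is where the simpliciality of $v$ is used a second time (you claim it is used ``exactly once,'' which is the telltale sign of the omission). Given a facet $F=V\setminus S$ with $v\in F$, the independent set $S$ meets the clique $N(v)$ in at most one vertex; call it $w$, or take $w$ to be any element of $S$ if $S\cap N(v)=\emptyset$. Then $S+v-w$ is still independent, so $F'=F-v+w$ is a facet of $\Delta$ avoiding $v$ with $\tau\subseteq F\setminus v\subseteq F'$. This swap is the heart of the paper's proof and is what makes your first sentence true. The remainder of your argument --- translating independent $k$-sets through $v$ into independent $(k-1)$-sets of $G\setminus v$ avoiding $N(v)$, and identifying these with the facets of $\Delta^t_{k-1}(G\setminus v)$ containing $N(v)$, i.e.\ the facets of the star --- is correct and matches the second half of the paper's proof, and your handling of the void/degenerate cases is fine.
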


\begin{proof}
We start by showing that $\del_{\Delta} v$ is pure. For any $\tau\in \del_{\Delta} v$, we will show that there exists a facet of $\Delta$ that contains $\tau$ and not $v$. Since $\Delta$ is pure, there exists a facet $F\in \Delta$ such that $\tau \subseteq F$. If $v\not\in F$, we are done. Now assume $v \in F$, and
$F=V\setminus S$.  Since $S$ is an independent set and $N(v)$ forms a clique, there can be at most one neighbor of $v$ that is in $S$.  If such a neighbor of $v$ exists, call it $w$; otherwise pick any vertex in $S$ as $w$.  Then $S+v-w$ is independent, so $F' = F - v +w$ is another facet of $\Delta$.
\begin{comment}
Since $F^c$ forms an independent set and $N(v)$ forms a clique, there can be at most one neighbor of $v$ that is in $F^c$. If such neighbor of $v$ exists, call it $w$, otherwise pick any vertex in $F^c$ as $w$. It is clear that $F' = F - v +w$ is another facet of $\Delta$ as its complement $F^c +v - w$ is an independent set. 
\end{comment}
Since $\tau\subseteq F'$ and $v\not\in F'$, $\tau \subseteq F' \in \del_{\Delta}v$. Therefore $\del_{\Delta}v$ is pure.

We now know that $\del_{\Delta}v$ is the pure simplicial complex generated by the facets of $\Delta$ that do not contain $v$. Let $F \in \del_{\Delta}v$ be one of these facets. Then $v\not\in F$, so the entire set $N(v)$ must be in $F$ and also $F \in \Delta^t_{k-1}(G\setminus v)$. This means $F$ is in the star of $N(v)$ in $\Delta_{k-1}^t(G\setminus v)$.
\end{proof}

\begin{theorem}\label{thm: tot-cut complex add simplicial vertex -Lei-M-S}
    Let $G$ be a graph, $v\in V(G)$ a simplicial vertex with nonempty neighborhood $N(v)$, and $k\geq 2$. Let $H=G\setminus v$.
    If $\Delta^t_k(H)$ is not the void complex, then there is a homotopy equivalence 
    \begin{equation}\label{eqn:susp-remove-simplicial-vertexS}
\Delta_k^t(G)\simeq \susp \Delta_k^t(G\setminus v),
\end{equation}
    and hence an isomorphism in homology $\Tilde{H}_n(\Delta_k^t(G))\cong\Tilde{H}_{n-1}(\Delta_k^t(H))$ for all $n\geq 1$. 
    
    If $\Delta^t_k(H)$ is the void complex, then $\Delta_k^t(G)$ is either contractible or the void complex. % 

\end{theorem}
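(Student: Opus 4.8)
The plan is to reduce everything to a single dichotomy, according to whether $\{v\}$ is a face of $\Delta:=\Delta_k^t(G)$. Since $\{v\}\in\Delta$ exactly when $V(G)\setminus\{v\}=V(H)$ contains an independent set of size $k$, this happens precisely when $\Delta_k^t(H)$ is not void, so these are exactly the two cases of the theorem. Two preliminary observations I would record first: if $\Delta_k^t(H)$ is nonvoid then so is $\Delta$, since a size-$k$ independent set of $H$ is also one of $G$; and $\alpha(H)\le\alpha(G)\le\alpha(H)+1$, since a maximum independent set of $G$ drops in size by at most one (namely by $v$) when restricted to $H$.

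\emph{First part: $\Delta_k^t(H)$ nonvoid.} Here $v$ is a vertex of $\Delta$, and I would use the decomposition from \eqref{eqn:st-del-lk}: $\Delta=\st_\Delta(v)\cup\del_\Delta(v)$ with $\st_\Delta(v)\cap\del_\Delta(v)=\lk_\Delta(v)$. Both pieces are contractible: $\st_\Delta(v)$ is a cone on $v$ by \eqref{eqn:st-del-lk}; and by Lemma~\ref{lem:pure subcomplex avoiding one vertex, total cut}, $\del_\Delta(v)=\st_{\Delta_{k-1}^t(G\setminus v)}(N(v))$, which, being nonvoid (it contains the empty face), is a genuine cone on $N(v)$ and hence contractible by \eqref{eqn:st-del-lk}. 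Also, Lemma~\ref{lem:link of faces, total cut} with $W=\{v\}$ gives $\lk_\Delta(v)=\Delta_k^t(G\setminus v)=\Delta_k^t(H)$. Thus $\Delta$ is a union of two contractible subcomplexes meeting in $\lk_\Delta(v)$; by the standard fact that such a union is homotopy equivalent to the suspension of the intersection (it is the homotopy pushout of $\ast\leftarrow\lk_\Delta(v)\rightarrow\ast$, the inclusions of subcomplexes being cofibrations; see \cite{Koz2008,Hatcher2002}), we conclude $\Delta\simeq\susp\Delta_k^t(H)$, which is \eqref{eqn:susp-remove-simplicial-vertexS}. The homology statement is then the suspension isomorphism $\Tilde{H}_n(\susp X)\cong\Tilde{H}_{n-1}(X)$, $n\ge 1$.

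\emph{Second part: $\Delta_k^t(H)$ void}, i.e.\ $\alpha(H)\le k-1$. By the preliminary inequality, $\alpha(G)\le\alpha(H)+1\le k$. If $\alpha(G)<k$ then $\Delta$ is void and there is nothing to prove. Otherwise $\alpha(G)=k$, and then every size-$k$ independent set of $G$ must contain $v$ (otherwise it would be a size-$k$ independent set contained in $H$); hence $v$ belongs to no facet, and therefore to no face, of $\Delta$, so $\Delta=\del_\Delta(v)$. Applying Lemma~\ref{lem:pure subcomplex avoiding one vertex, total cut} again (its proof needs only that $\Delta$ is nonvoid, which holds since $\alpha(G)=k$), we get $\Delta=\del_\Delta(v)=\st_{\Delta_{k-1}^t(G\setminus v)}(N(v))$, a nonvoid star, hence a cone on $N(v)$, hence contractible by \eqref{eqn:st-del-lk}.

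I expect the main obstacle to be purely a matter of bookkeeping with void versus nonvoid complexes: one must verify that the star $\st_{\Delta_{k-1}^t(G\setminus v)}(N(v))$ arising from Lemma~\ref{lem:pure subcomplex avoiding one vertex, total cut} is genuinely nonvoid --- so that $N(v)$ really is one of its faces and the star is an honest cone, rather than the void complex --- and, in the second part, one must exploit $\alpha(H)\le\alpha(G)\le\alpha(H)+1$ to decide precisely when $\Delta$ is void and when it is contractible. The topological input (a union of two contractible subcomplexes is homotopy equivalent to the suspension of their intersection) is entirely standard, so apart from these points the argument is routine given the two structural lemmas.
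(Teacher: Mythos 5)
Your proof is correct and follows essentially the same route as the paper: the identical decomposition $\Delta_k^t(G)=\st_\Delta(v)\cup\del_\Delta(v)=(\Delta_k^t(H)\ast v)\cup\st_{\Delta_{k-1}^t(H)}N(v)$ via Lemmas~\ref{lem:link of faces, total cut} and~\ref{lem:pure subcomplex avoiding one vertex, total cut}, with the same case split on whether $\Delta_k^t(H)$ is void. The only (cosmetic) difference is the final step: the paper collapses the contractible piece $B=\del_\Delta(v)$ and identifies $X/B\cong \mathcal{C}_v(A_1)/A_1\cong\susp(A_1)$ via Propositions~\ref{prop:TopFact2ndIsoThm} and~\ref{prop:quotient-by-contractible-homotopy}, whereas you invoke the equivalent standard fact that a union of two contractible subcomplexes is homotopy equivalent to the suspension of their intersection; and the nonvoidness of the star that you flag as the "main obstacle" is exactly what the paper's Proposition~\ref{prop: tot-cut complex in Stars -Lei2} supplies (though your observation that $\del_\Delta(v)$ contains the empty face once $\Delta$ is nonvoid also suffices).
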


The proof of this theorem relies on the following general topological facts as well as two propositions. 

\begin{prop}[{\cite[Chapter 21, (21.3)]{VINK2008}}] \label{prop:TopFact2ndIsoThm} Let $A,B \subset X$ be such that $X=A\cup B$,  $A \cap B \neq \emptyset$, and $A, B$ are both closed subspaces, or both open subspaces,  of the topological space $X$.  Then the quotient map $A/(A\cap B) \rightarrow X/B$ of the inclusion $A\hookrightarrow X$ is a homeomorphism.
\end{prop}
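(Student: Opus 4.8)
The plan is to prove the point-set topology statement directly from the universal property of the quotient, working entirely at the level of set maps and continuity. Let $q_X \colon X \to X/B$ and $q_A \colon A \to A/(A\cap B)$ denote the two quotient maps. First I would observe that the restriction $q_X \circ (A \hookrightarrow X) \colon A \to X/B$ sends every point of $A\cap B$ to the single basepoint $[B] \in X/B$, and sends points of $A \setminus B$ injectively into $X/B$; hence by the universal property of the quotient $A/(A\cap B)$, it factors as $\bar f \circ q_A$ for a unique continuous map $\bar f \colon A/(A\cap B) \to X/B$. This $\bar f$ is the map in the statement, and the content is that it is a homeomorphism.

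Next I would check that $\bar f$ is a continuous bijection. Surjectivity: since $X = A \cup B$, every point of $X/B$ other than $[B]$ is the image of some point of $A\setminus B$, and $[B]$ is the image of the class $[A\cap B]$; so $\bar f$ is onto. Injectivity: $\bar f$ identifies exactly the points of $A$ lying in $A\cap B$ to one point, which is precisely the equivalence relation defining $A/(A\cap B)$, so distinct classes in $A/(A\cap B)$ have distinct images. Continuity of $\bar f$ is automatic from the factorization above.

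The remaining and only substantive point is that $\bar f^{-1}$ is continuous, i.e.\ that $\bar f$ is a closed (or open) map; this is where the hypothesis that $A$ and $B$ are both closed (or both open) in $X$ is used, and it is the step I expect to be the main obstacle to state cleanly. In the closed case: let $C \subseteq A/(A\cap B)$ be closed; then $q_A^{-1}(C)$ is closed in $A$, hence closed in $X$ because $A$ is closed in $X$. I claim $q_X^{-1}(\bar f(C))$ equals $q_A^{-1}(C)$ if $[B] \notin \bar f(C)$, and equals $q_A^{-1}(C) \cup B$ if $[B] \in \bar f(C)$; either way it is a finite union of closed subsets of $X$, hence closed, so $\bar f(C)$ is closed in $X/B$ by definition of the quotient topology. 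Thus $\bar f$ is a closed continuous bijection, hence a homeomorphism. The open case is identical with ``closed'' replaced by ``open'' throughout, using that $B$ is open so that $q_A^{-1}(C)\cup B$ is open when needed. I would double-check the set-theoretic identity for $q_X^{-1}(\bar f(C))$ carefully, since that is the one place where the decomposition $X = A \cup B$ and the precise form of the two equivalence relations interact, but it is a routine verification once the cases $[B]\in\bar f(C)$ and $[B]\notin\bar f(C)$ are separated.
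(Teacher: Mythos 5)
Your proposal is correct and follows essentially the same route as the paper: both are elementary point-set verifications that the canonical map $A/(A\cap B)\to X/B$ induced by the inclusion is a homeomorphism, using the hypothesis that $A$ and $B$ are both closed (or both open) in exactly the places you use it. The only difference is packaging: the paper shows the composite $A\hookrightarrow X\to X/B$ is a surjective open (resp.\ closed) map, hence a quotient map making the same identifications as $A\to A/(A\cap B)$, and then cites uniqueness of quotient spaces, whereas you verify directly that the induced continuous bijection is closed (resp.\ open) via the identity $q_X^{-1}(\bar f(C))=q_A^{-1}(C)$ or $q_A^{-1}(C)\cup B$, which is indeed correct as stated.
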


\begin{proof}
Let $\phi : A \to A/(A \cap B)$ be the quotient map, and let $\psi : A \to X/B$ be the composition of the inclusion $A \hookrightarrow X$ and the quotient map $X \to X/B$. The map $\psi$ is surjective, since the point representing $B$ is the image of any point in $A \cap B$, and every other point of $X/B$ is the image of a point of $X$ outside $B$, and thus inside $A$. Also, if $A$ and $B$ are open sets, then $\psi$ is an open map: the inclusion $A \hookrightarrow X$ is open, and every open set $U$ of $X$ is either disjoint from $B$, hence its image under the quotient map $X \to X/B$ is unchanged, or it intersects $B$, hence its image under $X \to X/B$ is the same as the image of $U \cup B$, which is open. Similarly, if $A$ and $B$ are closed sets, then $\psi$ is a closed map, by the same argument with ``open'' replaced with ``closed''. Therefore, $\psi$ is a quotient map, by \cite[Proposition~3.67]{LeeITM2011}. However, both $\phi$ and $\psi$ make the same identifications: two distinct points of $A$ are identified under each map if and only if they are both in $A \cap B$. Therefore, by the uniqueness of quotient spaces \cite[Theorem~3.75]{LeeITM2011}, the induced map $A/(A \cap B) \to X/B$ is a homeomorphism.
\end{proof}

\begin{prop}[{\cite[Proposition~0.17, Ex.~0.14]{Hatcher2002}, \cite[Proposition~4.1.5]{Matousek2003}}]\label{prop:quotient-by-contractible-homotopy} If $(X,A)$ is a CW pair consisting of a CW complex $X$ and a contractible subcomplex $A$, then the quotient map $X\rightarrow X/A$ is a homotopy equivalence.

More generally, if the subcomplex $A$ is contractible in the complex $X$, then the quotient $X/A$ is homotopy equivalent to $X\bigvee\susp A$.

\end{prop}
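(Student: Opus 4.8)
The plan is to establish the two assertions in turn: the first, on collapsing a contractible subcomplex of a CW complex, via the homotopy extension property; the second, the refinement $X/A\simeq X\bigvee\susp A$ when $A$ is only contractible \emph{in} $X$, by reducing to a mapping cone computation.

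For the first statement, I would begin from the homotopy extension property enjoyed by every CW pair $(X,A)$. Since $A$ is contractible, fix a homotopy $F\colon A\times I\to A$ with $F_0=\mathrm{id}_A$ and $F_1$ the constant map at a point $a_0\in A$. Applying the homotopy extension property to $\mathrm{id}_X$ and $F$ produces $\widetilde F\colon X\times I\to X$ with $\widetilde F_0=\mathrm{id}_X$ and $\widetilde F|_{A\times I}=F$; in particular $\widetilde F_t(A)\subseteq A$ for every $t$, and $\widetilde F_1(A)=\{a_0\}$. Let $q\colon X\to X/A$ be the quotient map. As $\widetilde F_1$ collapses $A$ to a single point, it factors uniquely as $\widetilde F_1=\bar g\circ q$ for a continuous map $\bar g\colon X/A\to X$, and $\bar g$ is the claimed homotopy inverse of $q$. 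To check this: on one side $\bar g\circ q=\widetilde F_1\simeq\widetilde F_0=\mathrm{id}_X$ via $\widetilde F$ itself; on the other side, since each $\widetilde F_t$ sends $A$ into $A$, each composite $q\circ\widetilde F_t\colon X\to X/A$ is constant on $A$ and so descends to $h_t\colon X/A\to X/A$ with $h_t\circ q=q\circ\widetilde F_t$, giving a homotopy from $h_0=\mathrm{id}_{X/A}$ to $h_1$; and $h_1\circ q=q\circ\widetilde F_1=q\circ\bar g\circ q$ together with surjectivity of $q$ forces $h_1=q\circ\bar g$, whence $q\circ\bar g\simeq\mathrm{id}_{X/A}$.

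For the second statement, write $i\colon A\hookrightarrow X$ for the inclusion and $CA=(A\times I)/(A\times\{1\})$ for the cone on $A$, and form the mapping cone $C_i=X\cup_i CA$, obtained by gluing the base $A\times\{0\}$ of $CA$ onto $X$ along $i$. Then $(C_i,CA)$ is a CW pair with $CA$ contractible, so by the first statement $C_i\simeq C_i/CA$; but collapsing $CA$ inside $X\cup_i CA$ collapses precisely $i(A)=A$ inside $X$, so $C_i/CA=X/A$ and hence $X/A\simeq C_i$. Now I would invoke the standard fact that the homotopy type of a mapping cone $C_f$ depends only on the homotopy class of the attaching map $f$. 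The hypothesis that $A$ is contractible in $X$ is exactly that $i$ is nullhomotopic, so $C_i\simeq C_c$ for a constant map $c\colon A\to\{a_0\}\subseteq X$. Finally $C_c$ is $X$ with the whole base $A\times\{0\}$ of $CA$ crushed to the single point $a_0$; crushing that base turns $CA$ into the unreduced suspension $\susp A$, with its two cone points being the crushed base and the cone apex, and this suspension meets $X$ only at $a_0$, so $C_c=X\bigvee\susp A$. Combining, $X/A\simeq C_i\simeq C_c=X\bigvee\susp A$.

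The one genuinely essential input is the homotopy extension property of CW pairs, which is why the first statement must assume $(X,A)$ is a CW pair; the rest is organization. The points most in need of care are the continuity of the descended homotopy $h_t$ --- which holds because $X\times I\to (X/A)\times I$ is a quotient map, $I$ being locally compact --- and the proof (or a precise citation) of homotopy invariance of the mapping cone; both are routine but must be spelled out so that the final conclusion is an honest homotopy equivalence rather than merely a homology isomorphism. I expect the bookkeeping in the first statement --- tracking which maps descend to $X/A$ and verifying that both composites $\bar g\circ q$ and $q\circ\bar g$ are homotopic to the relevant identity maps --- to be the main, though modest, obstacle.
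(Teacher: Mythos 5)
Your proof is correct. The paper gives no proof of its own for this proposition --- it simply cites Hatcher (Proposition~0.17 and Example~0.14) and Matou\v{s}ek --- and your argument is exactly the standard one from those sources: the homotopy extension property argument for collapsing a contractible subcomplex in the first claim, and the mapping-cone reduction $X/A \cong C_i/CA \simeq C_i \simeq C_c = X \bigvee \susp A$ (using that the homotopy type of $C_f$ depends only on the homotopy class of the nullhomotopic attaching map) for the second, with the two auxiliary facts you flag --- that $q\times \mathrm{id}_I$ is a quotient map since $I$ is locally compact, and homotopy invariance of attaching along a CW pair --- being precisely the right ones to cite.
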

\begin{prop} \label{prop tot-cut complex add edge -Lei2}
    Let $G$ be a graph, $v\in V(G)$ a simplicial vertex with neighborhood $N(v)$, and $k\geq 2$. Let $H=G\setminus v$. Then
    \begin{equation*}
        \Delta^t_k (G) = \big(\Delta^t_k(H)* v  \big) \cup \st_{\Delta^t_{k-1}(H)} N(v) .
    \end{equation*}
\end{prop}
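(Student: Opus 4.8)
The plan is to show set-theoretic equality of two simplicial complexes by analyzing their facets, since both sides are explicitly described in terms of facets. First I would unwind the right-hand side: a face of $\Delta^t_k(H)\ast v$ is either a face of $\Delta^t_k(H)$ (not containing $v$) or the union of $\{v\}$ with such a face; meanwhile $\st_{\Delta^t_{k-1}(H)} N(v)$ is the subcomplex of $\Delta^t_{k-1}(H)$ generated by those facets containing $N(v)$, which by Lemma~\ref{lem:pure subcomplex avoiding one vertex, total cut} is exactly $\del_{\Delta^t_k(G)} v$. So the right-hand side is $\big(\Delta^t_k(H)\ast v\big)\cup \del_{\Delta^t_k(G)} v$.

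Next I would invoke the first identity in \eqref{eqn:st-del-lk}, which gives $\Delta^t_k(G) = \st_{\Delta^t_k(G)} v \cup \del_{\Delta^t_k(G)} v$. Thus it suffices to prove $\st_{\Delta^t_k(G)} v = \Delta^t_k(H)\ast v$. Using $\st(v) = v \ast \lk(v)$, this reduces to showing $\lk_{\Delta^t_k(G)} v = \Delta^t_k(H) = \Delta^t_k(G\setminus v)$ — but this is precisely Lemma~\ref{lem:link of faces, total cut} applied to the single-vertex face $W=\{v\}$ (noting $v$ is a vertex of $\Delta^t_k(G)$ because its nonempty neighborhood guarantees an independent $k$-set avoiding $v$ whenever $\Delta^t_k(H)$ is nonvoid; if $\Delta^t_k(H)$ is void the claimed identity still holds as both sides collapse appropriately, so I would either assume nonvoidness as in the companion theorem or handle it as a trivial edge case). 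Combining, $\st_{\Delta^t_k(G)} v = v\ast \lk_{\Delta^t_k(G)} v = v \ast \Delta^t_k(H) = \Delta^t_k(H)\ast v$, and the desired decomposition follows immediately.

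Alternatively, and perhaps more transparently, I would argue directly on facets of $\Delta^t_k(G)$: each facet $F=V(G)\setminus S$ corresponds to an independent $k$-set $S$ in $G$, and I split on whether $v\in S$. If $v\notin S$, then $F\ni v$ and $F\setminus v = V(H)\setminus S$ is a facet of $\Delta^t_k(H)$, so $F$ is a facet of $\Delta^t_k(H)\ast v$. If $v\in S$, then $F\not\ni v$, the set $N(v)\subseteq F$ since no neighbor of $v$ lies in the independent set $S$, and $S\setminus v$ is an independent $(k-1)$-set of $H$, so $F=V(H)\setminus(S\setminus v)$ is a facet of $\Delta^t_{k-1}(H)$ containing $N(v)$, hence a facet of $\st_{\Delta^t_{k-1}(H)} N(v)$. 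The reverse inclusions run the same computations backwards, using that $v$ simplicial forces $N(v)$ to be a clique so that $S\cup\{v\}$ stays independent in the second case. The main obstacle — really the only subtle point — is verifying in the $v\in S$ direction that $N(v)\subseteq F$, i.e., that an independent set containing $v$ can contain no neighbor of $v$; this is immediate from the definition of independence and does not even use simpliciality. Simpliciality of $v$ is instead what is needed for the reverse direction and for the purity statement already established in Lemma~\ref{lem:pure subcomplex avoiding one vertex, total cut}, so I would lean on that lemma rather than reprove purity here.
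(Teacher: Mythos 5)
Your first argument is exactly the paper's proof: decompose $\Delta^t_k(G)$ as $\bigl(\lk_{\Delta^t_k(G)}(v) * v\bigr) \cup \del_{\Delta^t_k(G)}(v)$ via \eqref{eqn:st-del-lk}, then identify the two pieces using Lemma~\ref{lem:link of faces, total cut} and Lemma~\ref{lem:pure subcomplex avoiding one vertex, total cut}. The proposal is correct (and your explicit handling of the void edge case, plus the alternative direct facet computation, only adds detail the paper leaves implicit).
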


\begin{proof} Apply the standard facts about the star and deletion of a vertex listed in ~\eqref{eqn:st-del-lk} to the simplicial complex $\Delta=\Delta_k^t(G)$ to obtain 
\[ \Delta=\st_{\Delta}(v)\cup \del_{\Delta}(v)
=\left(\lk_\Delta(v)*v\right) \cup \del_{\Delta}(v).\] 
 The result now follows from Lemma~\ref{lem:link of faces, total cut} and Lemma~\ref{lem:pure subcomplex avoiding one vertex, total cut}.\end{proof}

\begin{prop}\label{prop: tot-cut complex in Stars  -Lei2}
    Let $H$ be a graph and $k\geq 2$.  Assume $\Delta_k^t(H)$ is not the void complex. Then for an arbitrary clique $N\subseteq V(H)$,
    \begin{equation*}
        \Delta^t_k (H) \subseteq \st_{\Delta^t_{k-1}(H)} N.
    \end{equation*}
\end{prop}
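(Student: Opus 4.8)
The plan is to unwind both sides of the claimed inclusion down to a statement about independent sets, and then invoke the elementary fact that a clique meets an independent set in at most one vertex.

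First I would record what membership in the star means. By downward-closure of simplicial complexes, for any $\sigma \subseteq V(H)$ we have $\sigma \in \st_{\Delta^t_{k-1}(H)} N$ if and only if $\sigma \cup N \in \Delta^t_{k-1}(H)$ (membership of $\sigma \cup N$ automatically forces membership of $\sigma$, so the two conditions in the definition of the star collapse to one). Thus the inclusion to be proved is equivalent to the assertion that every face $\sigma$ of $\Delta^t_k(H)$ satisfies $\sigma \cup N \in \Delta^t_{k-1}(H)$.

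Next, fix a face $\sigma \in \Delta^t_k(H)$. By Definition~\ref{def:tot-cut-cplx}, the complement $V(H)\setminus \sigma$ contains an independent set $S$ of size $k$. Since $N$ is a clique and $S$ is independent, $|S \cap N| \le 1$; hence $S \setminus N$ is an independent set of size at least $k-1$, and it lies in $V(H) \setminus (\sigma \cup N)$. Choosing any $(k-1)$-element subset of $S \setminus N$ exhibits an independent set of size $k-1$ in the complement of $\sigma \cup N$, so $\sigma \cup N \in \Delta^t_{k-1}(H)$, again by Definition~\ref{def:tot-cut-cplx}. Applying this to $\sigma = \emptyset$ (which is a face since $\Delta^t_k(H)$ is assumed non-void) and using $k - 1 \ge 1$ also confirms that $N$ itself is a face of $\Delta^t_{k-1}(H)$, so the star in the statement is non-void, in accordance with the inclusion \eqref{eqn:inclusion}.

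I do not expect a real obstacle here; the only points requiring care are the direction of the star/link bookkeeping in the first step — namely that $\sigma \cup N \in \Delta^t_{k-1}(H)$ is precisely the condition characterizing the star — and the observation $|S \cap N| \le 1$, which is exactly what produces the drop from an independent set of size $k$ to one of size $k-1$, and hence the shift from $\Delta^t_k$ to $\Delta^t_{k-1}$.
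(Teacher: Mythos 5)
Your proof is correct and uses the same key idea as the paper's: a clique meets an independent set in at most one vertex, which converts an independent $k$-set in the complement of $\sigma$ into an independent $(k-1)$-set in the complement of $\sigma\cup N$. The paper argues facet-by-facet (splitting into the cases $N\subseteq F$ and $N\not\subseteq F$) while you handle all faces uniformly, but this is only a cosmetic difference.
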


\begin{proof}
Let $F$ be a facet of $\Delta^t_k (H)$. If $N\subseteq F$, then clearly $F\in \st_{\Delta^t_{k-1}(H)} N$ since, from~\eqref{eqn:inclusion}, $\Delta^t_k(H) \subseteq \Delta^t_{k-1}(H)$. If $N\nsubseteq F$, then $N\cap (V(H)\setminus F)$ must consist of a single element $u_0$, since two elements of the clique $N$ cannot be in an independent set. % 
Then $N\subseteq F\cup u_0 \in \Delta^t_{k-1}(H)$, so $F\cup N\in \Delta^t_{k-1}(H)$, which also implies $F \in \st_{\Delta^t_{k-1}(H)} N$.
\end{proof}
\begin{proof}[Proof of Theorem~\ref{thm: tot-cut complex add simplicial vertex -Lei-M-S}]
Proposition \ref{prop tot-cut complex add edge -Lei2} says that 
\begin{equation*}
    \Delta^t_k (G) = {\underbrace{\Delta^t_k(H)* v}_{A}} \; \cup \; {\underbrace{\st_{\Delta^t_{k-1}(H)} N(v) }_{B}}.
\end{equation*}

 Note that if $B$ is not the void complex, then it is contractible because it is the star of a simplex, see ~\eqref{eqn:st-del-lk}.
We now have two cases. 

First suppose $\Delta_k^t(H)$ is the void complex. Then $A$ is the void complex, so $\Delta_k^t(G)=B=\st_{\Delta_{k-1}^t(H)} N(v)$. If $B$ is also the void complex, then $\Delta_k^t(G)$ is the void complex. If $B$ is not the void complex,  $B$, which is $\Delta_k^t(G)$, is contractible.

Now suppose $\Delta_k^t(H)$ is not the void complex, so $A$ is nonempty,  and from Proposition~\ref{prop: tot-cut complex in Stars  -Lei2},  $B$ is nonvoid and contractible.  

Moreover, $A\cap B=\Delta_k^t(H)$, since by Proposition \ref{prop: tot-cut complex in Stars  -Lei2}, $\Delta^t_k(H) \subseteq B$, so $\Delta_k^t(H)\subseteq A\cap B$, and $v\notin B$, so $A\cap B\subseteq \del_A v=\Delta_k^t(H)$. 

Write $A_1=\Delta^t_k(H)$ so that $A=\Delta^t_k(H)* v=\mathcal{C}_v(A_1)$ is the cone over $A_1$ with cone point $v$. %

Hence we have $X=\Delta^t_k (G)=\mathcal{C}_v(A_1)\cup B.$

Clearly $A$ and $B$ are closed subspaces of $X$, and $A\cap B=A_1\cap B=A_1$ since $v\notin B$.
 Thus  Proposition~\ref{prop:TopFact2ndIsoThm} applies, giving a homeomorphism
\begin{equation*}\label{eqn:homeo1}  X/B=(\mathcal{C}_v(A_1)\cup B)/B\cong \mathcal{C}_v(A_1)/A_1. \end{equation*}

Since $\mathcal{C}_v(A_1)/A_1$ is homeomorphic to $\susp(A_1)$,  we have the homeomorphism 
\begin{equation*}\label{eqn:homeo2} X/B\cong \susp(A_1).\end{equation*}

But $B$ is contractible, so by Proposition~\ref{prop:quotient-by-contractible-homotopy}, 
$X/B$ is homotopy equivalent to $X=\Delta^t_k (G)$.

We have established the homotopy equivalence 
$\Delta_k^t(G)\simeq \susp \Delta_k^t(G\setminus v)$
for a simplicial vertex $v$, as claimed.
 \end{proof}

A simple application of Theorem~\ref{thm: tot-cut complex add simplicial vertex -Lei-M-S} is provided by the path graph $P_n$. 
It is easy to see that   $\Delta_k^t(P_n)$ is nonvoid only if $n\ge 2k-1$. Also $\Delta_k^t(P_{2k-1})$ has exactly one facet and is thus contractible.
Since $P_n$ may be obtained by successively adding $(n-2k+1)$  simplicial vertices to the path $P_{2k-1}$, we can apply Theorem~\ref{thm: tot-cut complex add simplicial vertex -Lei-M-S} to conclude that $\Delta^t_k(P_n)$ is contractible for all $n \ge 2k-1$, since 
\[\Delta^t_k(P_n)\simeq \underbrace{\susp\cdots\susp}_{n-2k+1} \Delta^t_k(P_{2k-1})\simeq \text{ a point}.\]

A more interesting application of Theorem~\ref{thm: tot-cut complex add simplicial vertex -Lei-M-S} to a specific family of graphs is provided in the next section, in Example~\ref{ex:simplicial-vertexS}.

The homotopy equivalence~\eqref{eqn:add-isolated-vertexS} of Theorem~\ref{thm:G-with-isolated-vertex-MarijaS} below is a counterpart to~\eqref{eqn:susp-remove-simplicial-vertexS} in Theorem~\ref{thm: tot-cut complex add simplicial vertex -Lei-M-S}.
\begin{theorem}\label{thm:G-with-isolated-vertex-MarijaS}
    Let $G$ be a graph and $G \sqcup v$ the graph consisting of $G$ and an additional isolated vertex $v$.
    \begin{enumerate}
    \item[(a)] There is a homotopy equivalence 
    \begin{equation}\label{eqn:add-isolated-vertexS}\Delta_k^t(G\sqcup v)\simeq \Delta_{k-1}^t(G)/\Delta_k^t(G).
\end{equation}
If in addition $\Delta_k^t(G)$ is contractible in $\Delta_{k-1}^t(G)$, we have 
\begin{equation}\label{eqn:add-isolated-vertex-when-contactibleS}\Delta_k^t(G\sqcup v)\simeq \Delta_{k-1}^t(G)\bigvee \susp\Delta_k^t(G).
\end{equation}
    \item[(b)]
    For every $k \ge 2$ there is a long exact sequence in homology: 
$$\cdots \rightarrow \tilde{H}_i(\Delta^t_k(G)) \rightarrow  \tilde{H}_i(\Delta^t_{k-1}(G))
\rightarrow  \tilde{H}_i(\Delta^t_k(G \sqcup v))
\rightarrow  \tilde{H}_{i-1}(\Delta^t_k(G)) \rightarrow \cdots$$
\end{enumerate}
    \end{theorem}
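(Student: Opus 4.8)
The plan is to build on Proposition~\ref{prop:adding-isolated-vertex}, which tells us $\Delta_k^t(G\sqcup v) = \Delta^t_{k-1}(G)\cup (\Delta_k^t(G)\ast v)$. The key observation is that this expresses $\Delta_k^t(G\sqcup v)$ as a union $A\cup B$ where $B=\Delta_k^t(G)\ast v=\mathcal{C}_v(\Delta_k^t(G))$ is a cone over $\Delta_k^t(G)$, hence contractible, and $A=\Delta^t_{k-1}(G)$. By the inclusion~\eqref{eqn:inclusion} we have $\Delta_k^t(G)\subseteq \Delta^t_{k-1}(G)$, and since $v\notin A$, the intersection $A\cap B$ is exactly $\Delta_k^t(G)$. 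So first I would check these set-theoretic facts carefully (that $A\cap B=\Delta_k^t(G)$, in particular that no face of $\Delta_k^t(G)\ast v$ involving $v$ lies in $\Delta^t_{k-1}(G)$), which is immediate since vertex sets are identified with subsets of $V(G\sqcup v)$ and $v$ is not a vertex of $G$.

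For part (a), with $X=\Delta_k^t(G\sqcup v)$, $A$ and $B$ closed subcomplexes, $A\cup B = X$, and $A\cap B=\Delta_k^t(G)$ nonempty (assuming $\Delta_k^t(G)$ is nonvoid — the degenerate cases where it is void or empty should be handled separately, using Proposition~\ref{prop:adding-isolated-vertex} directly), I would apply Proposition~\ref{prop:TopFact2ndIsoThm} to get a homeomorphism $A/(A\cap B)\cong X/B$, i.e. $\Delta^t_{k-1}(G)/\Delta_k^t(G)\cong X/B$. Since $B$ is contractible, Proposition~\ref{prop:quotient-by-contractible-homotopy} gives $X/B\simeq X=\Delta_k^t(G\sqcup v)$, yielding~\eqref{eqn:add-isolated-vertexS}. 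For~\eqref{eqn:add-isolated-vertex-when-contactibleS}, if $\Delta_k^t(G)$ is contractible as a subspace of $\Delta^t_{k-1}(G)$, then the ``more generally'' clause of Proposition~\ref{prop:quotient-by-contractible-homotopy} applied to the pair $(\Delta^t_{k-1}(G),\Delta_k^t(G))$ gives $\Delta^t_{k-1}(G)/\Delta_k^t(G)\simeq \Delta^t_{k-1}(G)\bigvee\susp\Delta_k^t(G)$; combining with~\eqref{eqn:add-isolated-vertexS} finishes this. This mirrors exactly the structure of the proof of Theorem~\ref{thm: tot-cut complex add simplicial vertex -Lei-M-S}.

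For part (b), I would use the long exact sequence of the pair $(\Delta^t_{k-1}(G),\Delta_k^t(G))$ in reduced homology, $\cdots\to\tilde H_i(\Delta_k^t(G))\to\tilde H_i(\Delta^t_{k-1}(G))\to H_i(\Delta^t_{k-1}(G),\Delta_k^t(G))\to\tilde H_{i-1}(\Delta_k^t(G))\to\cdots$, together with the fact that for a good pair (CW pair) the relative homology $H_i(\Delta^t_{k-1}(G),\Delta_k^t(G))$ is isomorphic to $\tilde H_i(\Delta^t_{k-1}(G)/\Delta_k^t(G))$, which by part (a) is $\tilde H_i(\Delta_k^t(G\sqcup v))$. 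Splicing these identifications into the long exact sequence gives the stated sequence. One should be mildly careful about the degenerate case $\Delta_k^t(G)$ void, where the pair sequence degenerates; but then $\Delta_k^t(G\sqcup v)\simeq\Delta^t_{k-1}(G)$ (or is void), and the exact sequence holds trivially.

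The main obstacle I anticipate is not any single hard step but rather the bookkeeping around degenerate cases: $\Delta_k^t(G)$ or $\Delta^t_{k-1}(G)$ being void or consisting only of the empty face, and whether $v$ actually appears as a vertex of $\Delta_k^t(G\sqcup v)$ (it does precisely when $\Delta_k^t(G)$ is nonvoid, i.e.\ $\alpha(G)\geq k$, since then some independent $k$-set avoids $v$). Getting the hypotheses of Propositions~\ref{prop:TopFact2ndIsoThm} and~\ref{prop:quotient-by-contractible-homotopy} to apply — nonemptiness of $A\cap B$, and $B$ genuinely being a nonvoid cone — requires separating out these cases cleanly, exactly as was done in the proof of Theorem~\ref{thm: tot-cut complex add simplicial vertex -Lei-M-S}.
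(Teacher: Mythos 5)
Your proposal is correct and follows essentially the same route as the paper's own proof: the decomposition from Proposition~\ref{prop:adding-isolated-vertex} into $\Delta_{k-1}^t(G)\cup\mathcal{C}_v(\Delta_k^t(G))$, the quotient argument via Propositions~\ref{prop:TopFact2ndIsoThm} and~\ref{prop:quotient-by-contractible-homotopy}, and the long exact sequence of the pair $(\Delta_{k-1}^t(G),\Delta_k^t(G))$ for part (b). Your extra attention to the degenerate (void) cases is reasonable bookkeeping that the paper leaves implicit.
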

\begin{proof} Part (a) follows as in the preceding proof. From Proposition~\ref{prop:adding-isolated-vertex} we have, since $v$ is an isolated vertex, 
\[\Delta_k^t(G\sqcup v)=\Delta_{k-1}^t(G)\cup \mathcal{C}_v(\Delta_k^t(G)), \]  
where as before, $\mathcal{C}_v$ denotes the cone with cone point $v$.
Hence by Proposition~\ref{prop:TopFact2ndIsoThm} and Proposition~\ref{prop:quotient-by-contractible-homotopy}, we have the homotopy equivalences 
\begin{equation*} 
\begin{split}\Delta_k^t(G\sqcup v) \simeq [\Delta_{k-1}^t(G)\cup \mathcal{C}_v(\Delta_k^t(G))]/\mathcal{C}_v(\Delta_k^t(G))\\
\simeq \Delta^t_{k-1}(G)/[\Delta_{k-1}^t(G)\cap\mathcal{C}_v(\Delta_k^t(G))  ].
\end{split}
\end{equation*}
    But the intersection $\Delta_{k-1}^t(G)\cap\mathcal{C}_v(\Delta_k^t(G))$ is precisely 
    $\Delta^t_k(G)$, 
    since, from~\eqref{eqn:inclusion}, $\Delta^t_k(G)\subseteq \Delta_{k-1}^t(G)$, and  $v\notin G$.
    The result follows from Proposition~\ref{prop:quotient-by-contractible-homotopy}.

    For Part (b), note that (e.g., \cite{Hatcher2002}) for a CW-complex $X$ and a subcomplex $A$, the reduced homology of the quotient complex $X/A$ is isomorphic to the relative homology of the pair $(X,A)$.  Hence the homotopy equivalence of Part (a) gives us the  homology isomorphism 
    \[\tilde{H}_i(\Delta^t_k(G\sqcup v))\cong H_i(\Delta^t_{k-1}(G), \Delta^t_k(G)).\] 
    The conclusion is now  immediate from the long exact homology sequence of the pair \cite{Hatcher2002}
    $(\Delta^t_{k-1}(G), \Delta^t_k(G))$.
 \end{proof}

\section{Specific classes of graphs}

We consider the total cut complexes of some specific classes of graphs.

\begin{prop}  Let $k\ge 2$.
\begin{itemize}
\item The total cut complex of the complete graph $K_n$ is the void complex.
\item For $k\le n$ the total cut complex $\Delta_k^t$ of the edgeless graph $\overline{K}_n$ is
      the $(n-k-1)$-skeleton of the $(n-1)$-simplex.  Therefore, 
      $\Delta^t_k(\overline{K}_n)$ is shellable and is homotopy equivalent to
      a wedge of $\binom{n-1}{k-1}$ spheres of dimension $n-k-1$. 
\end{itemize}
\end{prop}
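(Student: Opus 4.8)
The plan is to dispatch the two bullets separately, each by a direct inspection of independent sets.

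For the complete graph, every two vertices of $K_n$ span an edge, so the only independent sets of $K_n$ are the empty set and the singletons. Since we are assuming $k\ge 2$, there is no independent set of size $k$, so by Definition~\ref{def:tot-cut-cplx} the complex $\Delta_k^t(K_n)$ has no facets, hence no faces at all; by our convention this is the void complex. That is the entire argument for the first bullet.

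For the edgeless graph, since $\overline{K}_n$ has no edges, \emph{every} subset of its vertex set $[n]$ is independent, so the independent sets of size $k$ are exactly the $k$-subsets of $[n]$, and the facets of $\Delta_k^t(\overline{K}_n)$ are precisely their complements, namely all $(n-k)$-subsets of $[n]$. A pure simplicial complex on $[n]$ whose facets are all the $(n-k)$-subsets is, by definition, the $(n-k-1)$-skeleton of the simplex on vertex set $[n]$, i.e. of the $(n-1)$-simplex. It then remains to recall two standard facts about skeleta of simplices: they are shellable, and their homotopy type is a wedge of top-dimensional spheres. For shellability I would either exhibit an explicit shelling — for instance, the lexicographic order on the $(n-k)$-subsets of $[n]$ — or invoke the general fact that the (pure) $d$-skeleton of a shellable complex is shellable, applied to the simplex. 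Being pure and shellable of dimension $n-k-1$, Theorem~\ref{thm:shell-implies-homotopytype} forces $\Delta_k^t(\overline{K}_n)$ to be either contractible or a wedge of $(n-k-1)$-spheres. To count the spheres I would compute the reduced Euler characteristic: the $j$-faces of the $(n-k-1)$-skeleton of the $(n-1)$-simplex are the $(j+1)$-subsets of $[n]$, so
\[
\tilde\chi\bigl(\Delta_k^t(\overline{K}_n)\bigr)
= \sum_{i=0}^{n-k} (-1)^{i-1}\binom{n}{i}
= (-1)^{n-k-1}\binom{n-1}{n-k}
= (-1)^{n-k-1}\binom{n-1}{k-1},
\]
using the standard identity $\sum_{i=0}^{m}(-1)^i\binom{n}{i}=(-1)^m\binom{n-1}{m}$. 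Since this is nonzero, the complex is not contractible, so it is a wedge of exactly $\binom{n-1}{k-1}$ spheres of dimension $n-k-1$, and the sign $(-1)^{n-k-1}$ is consistent with this.

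There is no real obstacle here; the only delicate point is bookkeeping at the extremes of the range $2\le k\le n$. When $k=n$ the unique facet is the empty set, so $\Delta_n^t(\overline{K}_n)=\{\emptyset\}$ is the $(-1)$-skeleton of the $(n-1)$-simplex; this matches the statement only under the convention $\bbS^{-1}=\emptyset$ (equivalently $\tilde H_{-1}(\{\emptyset\})\cong\mathbb{Z}$, a single $(-1)$-sphere), which I would flag explicitly. When $k=n-1$ the complex is $n$ isolated points, a wedge of $\binom{n-1}{n-2}=n-1$ copies of $\bbS^0$, which checks out. I would state these conventions up front rather than let them pass silently.
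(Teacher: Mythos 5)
Your proof is correct; the paper in fact states this proposition without proof, and your argument --- every subset of $V(\overline{K}_n)$ is independent, so the facets are exactly the $(n-k)$-subsets of $[n]$ and the complex is the $(n-k-1)$-skeleton of the $(n-1)$-simplex, after which shellability and the wedge-of-spheres count follow from standard facts about skeleta of simplices --- is precisely the intended one. Your Euler-characteristic count of the spheres and the explicit flagging of the boundary cases $k=n$ and $k=n-1$ are correct and, if anything, more careful than the paper.
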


We can now give an example to illustrate the homotopy equivalence of Theorem~\ref{thm:G-with-isolated-vertex-MarijaS}.
\begin{ex}\label{ex:G-with-isolated-vertex-MarijaS}  Let $G=\bar{K}_n$. Then $\bar{K}_{n+1}$ is obtained by adding an isolated vertex $v$ to $G$, i.e., $\bar{K}_{n+1}=G\sqcup v$.

Using the homotopy equivalences~\eqref{eqn:add-isolated-vertexS} and ~\eqref{eqn:add-isolated-vertex-when-contactibleS}, we have:
\[ \Delta_k(G\sqcup v)\simeq \Delta_{k-1}^t(G)/\Delta_k^t(G)  \simeq \Delta_{k-1}^t(G)\bigvee \susp(\Delta_k^t(G)), \]
where the second homotopy  follows from 
the fact that  the $m$-skeleton of a simplex is contractible in the $(m+1)$-skeleton.

Since $\Delta_{k-1}^t(G)\simeq \bigvee_{\binom{n-1}{k-2}} \bbS^{n-k}$ and $\Delta_k^t(G)\simeq \bigvee_{\binom{n-1}{k-1}} \bbS^{n-k-1},$ the right-hand side is homotopy equivalent to $\bigvee_{\binom{n}{k-1}} \bbS^{n-k}$,
which is indeed the homotopy type of  $\Delta_k(G\sqcup v)=\Delta_k(\bar{K}_{n+1})$.  

\end{ex}

For complete bipartite graphs and complete multipartite graphs,
the total cut complex $\Delta_k^t(G)$ coincides with the ordinary cut complex 
$\Delta_k(G)$: maximal disconnecting sets are complements of independent sets.
These complexes are studied in  more detail in the forthcoming paper \cite{BDJRSX}.  We present here the
basic result on complete bipartite graphs, stated for the total cut 
complex.
\begin{theorem}\label{thm:bipartite}(See also \cite{BDJRSX})
Let $1\le m\le n$ and $k\ge 2$.  Then
$\Delta_k^t(K_{m,n})$ is shellable if and only if  $m<k$.
Furthermore, if $m<k\le n$, then $\Delta_k^t(K_{m,n})$ is contractible, and if
$m=k=n$, then $\Delta_k^t(K_{m,n})$ is homotopy equivalent to the 0-sphere.
If $k>n$, the (total) cut complex is void and hence shellable.
\end{theorem}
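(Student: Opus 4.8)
The plan is to analyze $\Delta_k^t(K_{m,n})$ by describing its facets explicitly and then treating the three regimes $m<k$, $m=k\le n$, and $k>n$ separately. Label the parts of the bipartition $X$ (size $m$) and $Y$ (size $n$). Since $K_{m,n}$ has no edges inside $X$ or inside $Y$, the independent sets are exactly the subsets of $X$ together with the subsets of $Y$; thus a maximum independent set has size $\max(m,n)=n$, so $\alpha(K_{m,n})=n$, which immediately gives the ``$k>n$ implies void'' statement. For $k\le n$, an independent set of size $k$ is either a $k$-subset of $Y$ or (only when $k\le m$) a $k$-subset of $X$. So the facets of $\Delta_k^t(K_{m,n})$ are the complements $V\setminus S$ where $S$ ranges over these sets; equivalently, using the vertex-cover description from Definition~\ref{def:tot-cut-cplx}, the facets are the vertex covers of size $m+n-k$.

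For the case $m<k\le n$: here every size-$k$ independent set lies inside $Y$, so every facet contains all of $X$. Pick any vertex $x\in X$. Then $x$ is in every facet, so $x$ is a cone point of $\Delta_k^t(K_{m,n})$, and a cone is contractible; moreover a complex that is a cone over a shellable base is shellable (the base here being $\Delta_k^t(K_{m,n})$ with $x$ removed, which is itself a cone over $X\setminus x$, etc.), so $\Delta_k^t(K_{m,n})$ is shellable and contractible. Actually the cleanest route is: since all facets contain $X$, we have $\Delta_k^t(K_{m,n}) = X \ast \lk(X)$, a cone (indeed an iterated cone) hence contractible, and a join with a simplex preserves shellability, so shellability reduces to shellability of the link, which one can dispatch by induction or by identifying it with a skeleton of a simplex on $Y$. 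This handles ``$m<k$ implies shellable'' and ``$m<k\le n$ implies contractible'' simultaneously.

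For the case $m=k=n$ (so $K_{n,n}$ with $k=n$): now the size-$k$ independent sets are exactly $X$ itself and $Y$ itself, giving precisely two facets, $V\setminus X = Y$ and $V\setminus Y = X$, which are disjoint. A simplicial complex with exactly two disjoint facets, each of dimension $n-1$, is the disjoint union of two $(n-1)$-simplices; this is homotopy equivalent to two points, i.e.\ to $\bbS^0$. (It is not shellable once $n\ge 1$, since a shelling would require the two facets to intersect in a codimension-one face, matching the claim that shellability fails when $m\ge k$.) The remaining sub-case of ``$m\ge k$'' needed for the ``only if'' direction of the shellability claim is $2\le k\le m$ (with $k\le n$): here both $k$-subsets of $X$ and $k$-subsets of $Y$ are independent, so $\Delta_k^t$ is not a cone and one must show non-shellability directly—this is the main obstacle. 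The natural approach is to compute $\tilde H_*(\Delta_k^t(K_{m,n}))$ (e.g.\ via the decomposition $\Delta_k^t = (X \ast \lk X) \cup (Y \ast \lk Y)$ and Mayer–Vietoris, or by discrete Morse theory as flagged in the Appendix) and exhibit homology in more than one dimension, or homology in a dimension below the top, either of which obstructs pure shellability by Theorem~\ref{thm:shell-implies-homotopytype}; alternatively, cite the companion paper \cite{BDJRSX} for the full cut-complex computation, since $\Delta_k^t(K_{m,n})=\Delta_k(K_{m,n})$ for multipartite graphs. I expect the bookkeeping in this non-shellability step—pinning down exactly which Betti numbers are nonzero for general $2\le k\le m\le n$—to be the part requiring real care; the positive (shellable/contractible) cases are routine cone arguments.
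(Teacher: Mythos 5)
Your treatment of the easy regimes matches the paper's: for $m<k\le n$ the paper likewise observes that every facet contains all of $X$, so $\Delta_k^t(K_{m,n})$ is the join of the $(m-1)$-simplex with the $(n-k-1)$-skeleton of a simplex on $Y$ (a single simplex when $k=n$), hence shellable and contractible; for $m=k=n$ it is two disjoint $(n-1)$-simplices, hence $\simeq\bbS^0$ and not shellable; and $k>n$ gives the void complex. Those parts are fine.

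The genuine gap is the case $2\le k\le m\le n$, which carries the entire ``only if'' direction of the shellability claim. You correctly identify it as the main obstacle and name plausible tools (Mayer--Vietoris, discrete Morse theory, or citing \cite{BDJRSX}), but you do not actually produce the computation, and deferring to the companion paper is not a proof here. What the paper does at this point is concrete and short: it runs two element matchings, first with a vertex $a_1\in V_m$ and then with a vertex $b_1\in V_n$. The faces left unmatched are exactly those $\sigma$ with $a_1\notin\sigma$, $|\sigma\cap V_m|=m-k$, $b_1\in\sigma$, and $|\sigma\cap V_n|=n-k+1$; there are $\binom{m-1}{k-1}\binom{n-1}{k-1}$ of them, all of cardinality $m+n-2k+1$. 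By Theorem~\ref{thm:seq-of-elt-match} the union of the two matchings is acyclic, so $\Delta_k^t(K_{m,n})\simeq\bigvee_{\binom{m-1}{k-1}\binom{n-1}{k-1}}\bbS^{\,m+n-2k}$. Since the complex is pure of dimension $m+n-k-1>m+n-2k$ for $k\ge 2$, and the wedge is nonempty because $k\le m\le n$, Theorem~\ref{thm:shell-implies-homotopytype} rules out shellability. Without carrying out this step (or an equivalent homology computation exhibiting nontrivial homology below the top dimension), your proof of the theorem is incomplete.
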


We prove this theorem with several propositions covering different cases.

Let the two partite sets of $K_{m,n}$ be $V_m$ of size $m$
and $V_n$ of size $n$; write $V=V_m\sqcup V_n$.

 \begin{prop}\mbox{ }\label{prop:Bipartite:k=n}
 \begin{itemize}
 \item If $m<k=n$, then $\Delta^t_k(K_{m,n})$ is the $(m-1)$-simplex, so is trivially
       shellable and contractible.
 \item If $m=k=n$, then $\Delta^t_k(K_{m,n})$ is the union of two disjoint $(m-1)$-simplices, hence not shellable.  It is 
         homotopy equivalent to the 0-sphere. 
 \end{itemize}
 \end{prop}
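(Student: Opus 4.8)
The plan is to first determine exactly which size-$k$ subsets of $V = V_m \sqcup V_n$ are independent in $K_{m,n}$, and then read off the facets of $\Delta^t_k(K_{m,n})$ as their complements. Since in $K_{m,n}$ every vertex of $V_m$ is adjacent to every vertex of $V_n$, an independent set cannot contain a vertex from each side; hence a size-$k$ independent set is either a $k$-subset of $V_m$ or a $k$-subset of $V_n$, and the facets of $\Delta^t_k(K_{m,n})$ are the sets $V \setminus S$ for such $S$. Throughout we use the standing hypothesis $k \ge 2$.

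For the first bullet, I would assume $m < k = n$. Then $|V_m| = m < k$, so $V_m$ has no $k$-subset, while the only $k$-subset of $V_n$ is $V_n$ itself. Thus $\Delta^t_k(K_{m,n})$ has the single facet $V \setminus V_n = V_m$, so it is the full simplex on the $m$-element set $V_m$, i.e.\ the $(m-1)$-simplex; a complex with a single facet is trivially shellable, and a simplex is contractible. For the second bullet, assume $m = k = n$. Now $V_m$ and $V_n$ are each the unique $k$-subset of themselves, so the facets of $\Delta^t_k(K_{m,n})$ are exactly $V \setminus V_n = V_m$ and $V \setminus V_m = V_n$. Since $V_m \cap V_n = \emptyset$, the complex is the disjoint union $\langle V_m \rangle \sqcup \langle V_n \rangle$ of two simplices, each of dimension $m - 1 \ge 1$. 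To see it is not shellable, note that in any shelling $F_1, F_2$ of a pure complex the intersection $\langle F_1 \rangle \cap \langle F_2 \rangle$ must be a nonempty complex, its facets having cardinality $|F_2| - 1 \ge 1$; but $\langle V_m \rangle \cap \langle V_n \rangle = \emptyset$. (Equivalently, a shellable complex of positive dimension is connected, and this one is not.) Finally, each of $\langle V_m \rangle$ and $\langle V_n \rangle$ is contractible, so their disjoint union is homotopy equivalent to two points, i.e.\ to $\bbS^0$.

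There is no real obstacle here; the proof is a direct unwinding of the definition of $\Delta^t_k$. The one point worth flagging is the role of the hypothesis $k \ge 2$ in the second bullet: it guarantees that the two disjoint simplices have dimension at least $1$, so that disconnectedness genuinely obstructs shellability (a $0$-dimensional complex would, by the convention adopted earlier, still be shellable).
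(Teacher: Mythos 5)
Your proof is correct and is exactly the routine verification the paper leaves implicit (the proposition is stated without proof there): identify the size-$k$ independent sets of $K_{m,n}$ as $k$-subsets of one side, take complements to get the facets, and observe the resulting simplex or disjoint pair of simplices. Your remark on why $k\ge 2$ matters for the non-shellability claim is a worthwhile detail the paper does not spell out.
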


 We turn now to the case where $m<k<n$.  %
 
 Here the $k$-element independent sets are the $k$-subsets of $V_n$, so the facets
of $\Delta_k^t$ are the $(m+n-k)$-element subsets of $V$ of the form
  $V_m\sqcup T$ where $T$ is a subset of $V_n$ of size $(n-k)$.
We show that the complex is then shellable as well as contractible.

 \begin{prop}\label{prop:Bipartite:m<k<n}  Let $m<k<n.$ Then $\Delta^t_k(K_{m,n})$ is shellable and contractible.
 \end{prop}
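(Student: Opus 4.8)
The plan is to identify $\Delta^t_k(K_{m,n})$ explicitly and then write down a shelling order, from which contractibility follows. As noted just before the statement, when $m < k < n$ the $k$-element independent sets of $K_{m,n}$ are exactly the $k$-subsets of $V_n$, so the facets of $\Delta^t_k(K_{m,n})$ are precisely the sets $V_m \sqcup T$ where $T \subseteq V_n$ with $|T| = n-k$. Thus, after coning off the common vertex set $V_m$, the complex is $\mathcal{C}_{V_m}(\Gamma)$ where $\Gamma$ is the complex on vertex set $V_n$ whose facets are all the $(n-k)$-subsets of $V_n$, i.e. $\Gamma$ is the $(n-k-1)$-skeleton of the $(n-1)$-simplex on $V_n$. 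Since $V_m$ is nonempty (as $m \ge 1$), the whole complex is a cone, hence contractible; that disposes of the contractibility claim immediately.

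For shellability, I would use the standard fact that a cone $v * \Gamma$ is shellable whenever $\Gamma$ is shellable (one can take any shelling of $\Gamma$ and cone each facet), and iterate over the vertices of $V_m$. So it suffices to shell the skeleton $\Gamma$. The $(n-k-1)$-skeleton of a simplex is well known to be shellable — indeed vertex decomposable — and a concrete shelling order is obtained by ordering the $(n-k)$-subsets $T$ of $V_n$ lexicographically (after fixing a linear order on $V_n$). One checks the shelling condition directly: for $T$ appearing after $T'$ in lex order, $\langle V_m \sqcup T\rangle \cap \bigcup_{T' < T}\langle V_m \sqcup T'\rangle$ is generated by the ridges $V_m \sqcup (T \setminus \{t\})$ for those $t \in T$ that are not the lex-least element, and this is a pure complex of the right codimension. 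Alternatively, I could simply invoke that the $j$-skeleton of a simplex is shellable (a cited standard result) and combine it with the "cone of a shellable complex is shellable" lemma, avoiding the explicit verification.

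The main obstacle, such as it is, is purely bookkeeping: making sure the intersection of a facet's closure with the union of earlier facets' closures comes out pure of dimension one less, which amounts to the elementary combinatorial observation that if $T, T'$ are distinct $(n-k)$-subsets of $V_n$ with $T' <_{\mathrm{lex}} T$, then one can move from $T$ to a subset sharing all but one element with some earlier $T'$ by swapping out the lex-least removable element of $T$. This is routine and is exactly the classical argument that skeleta of simplices are shellable, so I do not expect genuine difficulty here; the real content of the proposition is the identification of the facets, which the paper has already carried out. An even cleaner route, if one wishes to cite rather than compute, is to note that $\Delta^t_k(K_{m,n})$ is the join of a simplex on $V_m$ with the $(n-k-1)$-skeleton of a simplex on $V_n$, and both joinands are vertex decomposable, hence so is the join, hence it is shellable; contractibility then follows from the presence of the cone point, or from Theorem~\ref{thm:shell-implies-homotopytype} together with the observation that the complex has no top homology (it is a proper skeleton joined with a simplex).
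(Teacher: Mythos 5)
Your proposal is correct and follows essentially the same route as the paper: identify $\Delta^t_k(K_{m,n})$ as the join of the $(m-1)$-simplex on $V_m$ with the $(n-k-1)$-skeleton of the simplex on $V_n$, invoke shellability of skeleta of simplices and of joins of shellable complexes, and deduce contractibility from the simplex (cone) factor. The explicit lexicographic shelling you sketch is a fine alternative to the citations but adds nothing beyond the paper's argument.
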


 \begin{proof}%
 This cut complex is the join of an $(m-1)$-simplex with the $(n-k-1)$-skeleton of an $(n-1)$-simplex.
 But a simplex is shellable, so its skeleton is  shellable \cite[Theorem 3.1.7]{WachsPosetTop2007}, and the join of two shellable complexes is also shellable \cite[Theorem 3.1.6]{WachsPosetTop2007}. See also \cite[Corollary 4.4, Theorem 8.1]{BjWachsTAMS1983}. Hence $\Delta^t_k(K_{m,n})$ is shellable.

 Since $\Delta^t_k(K_{m,n})$ is the join of a contractible simplicial complex, the $(m-1)$-simplex, with another complex, it must be contractible.
  \end{proof}

 We determine the homotopy type of  the total cut complex $\Delta^t_k(K_{m,n})$ for $k\le m\le n$ by a discrete Morse theory argument. 

 \begin{prop}\label{prop:MorseMatchingBipartite:$k<=m$}
 Let $K_{m,n}$ be a complete  bipartite graph  and $k$ such that $2\le k \le m\le n.$   Then  the total cut complex $\Delta^t_k(K_{m,n})$ has the following homotopy type: 
 \begin{equation}\label{k_less_m}
 \Delta^t_k(K_{m,n}) \simeq \bigvee_{\binom{m-1}{k-1} \cdot \binom{n-1}{k-1}} \bbS^{m+n-2k}.
 \end{equation}
 In particular when $k=m$ we have 
 \begin{equation}\label{k_eq_m}
 \Delta^t_m(K_{m,n}) \simeq \bigvee_{\binom{n-1} {m-1}} \bbS^{n-m}.
 \end{equation}
 \end{prop}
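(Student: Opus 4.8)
The plan is to realize $\Delta^t_k(K_{m,n})$ as the union of two contractible subcomplexes meeting in a shellable complex, and then run the cone-and-quotient argument from the proof of Theorem~\ref{thm: tot-cut complex add simplicial vertex -Lei-M-S}. Write $V=V_m\sqcup V_n$ and, for a face $\sigma$, put $\sigma_m=\sigma\cap V_m$, $\sigma_n=\sigma\cap V_n$. Since the only independent sets of $K_{m,n}$ are the subsets of $V_m$ and the subsets of $V_n$, a set $\sigma$ is a face of $\Delta^t_k(K_{m,n})$ iff $V\setminus\sigma$ contains $k$ vertices lying entirely in $V_m$ or entirely in $V_n$, i.e.\ iff $|\sigma_m|\le m-k$ or $|\sigma_n|\le n-k$. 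Letting $\Sigma_m,\Sigma_n$ denote the full simplices on $V_m,V_n$, this says
\[
\Delta^t_k(K_{m,n})=A\cup B,\qquad A=\operatorname{skel}_{m-k-1}(\Sigma_m)\ast\Sigma_n,\quad B=\Sigma_m\ast\operatorname{skel}_{n-k-1}(\Sigma_n),
\]
so that $A\cap B=\operatorname{skel}_{m-k-1}(\Sigma_m)\ast\operatorname{skel}_{n-k-1}(\Sigma_n)$.

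First I would observe that $A$ and $B$ are contractible: a full simplex has a cone point $w$, and $Z\ast\mathcal{C}_w(Z')=\mathcal{C}_w(Z\ast Z')$, so each of $A,B$ is a cone. Assuming for now that $k<n$ (so that, under $k\le m\le n$, the intersection $A\cap B$ has nonempty realization; the excluded case is $k=m=n$, handled below), the subcomplexes $A,B$ of $X:=\Delta^t_k(K_{m,n})$ are closed with $X=A\cup B$ and $A\cap B\ne\emptyset$, so Proposition~\ref{prop:TopFact2ndIsoThm} gives a homeomorphism $X/A\cong B/(A\cap B)$. Since $A$ is contractible, Proposition~\ref{prop:quotient-by-contractible-homotopy} gives $X\simeq X/A$; since $B$ is contractible, $A\cap B$ is contractible in $B$, and the general form of the same proposition gives $B/(A\cap B)\simeq B\bigvee\susp(A\cap B)\simeq\susp(A\cap B)$. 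Chaining these, $\Delta^t_k(K_{m,n})\simeq\susp(A\cap B)$ — exactly the pattern used in the proof of Theorem~\ref{thm: tot-cut complex add simplicial vertex -Lei-M-S}.

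It remains to identify $A\cap B$. It is a join of skeleta of simplices, hence shellable (a simplex is shellable, so each of its skeleta is, and joins of shellable complexes are shellable), and pure of dimension $(m-k-1)+(n-k-1)+1=m+n-2k-1$; by Theorem~\ref{thm:shell-implies-homotopytype} it is contractible or a wedge of $(m+n-2k-1)$-spheres. A short reduced-Euler-characteristic computation decides which: summing over faces $\sigma_m\sqcup\sigma_n$ with $|\sigma_m|\le m-k$ and $|\sigma_n|\le n-k$ and using $\sum_{a=0}^{j}(-1)^a\binom{N}{a}=(-1)^j\binom{N-1}{j}$ yields
\[
\widetilde\chi(A\cap B)=(-1)^{m+n-2k-1}\binom{m-1}{k-1}\binom{n-1}{k-1}\ne 0,
\]
so $A\cap B\simeq\bigvee_{\binom{m-1}{k-1}\binom{n-1}{k-1}}\bbS^{m+n-2k-1}$. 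Suspending gives \eqref{k_less_m}, and specializing to $k=m$ (where $\binom{m-1}{k-1}=1$) gives \eqref{k_eq_m}. The remaining case $k=m=n$, where $A\cap B=\{\emptyset\}$ has empty realization and Proposition~\ref{prop:TopFact2ndIsoThm} does not apply, is already recorded in Proposition~\ref{prop:Bipartite:k=n}: the complex is two disjoint $(m-1)$-simplices, homotopy equivalent to $\bbS^0=\bigvee_1\bbS^{m+n-2k}$, as the formula predicts.

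The main obstacle I anticipate is purely bookkeeping: tracking the degenerate ranges where $m-k-1$ or $n-k-1$ equals $-1$, so that a ``skeleton'' collapses to $\{\emptyset\}$ and the corresponding join/suspension degenerates, and correspondingly isolating the single case $k=m=n$ where the intersection becomes topologically empty. Once the decomposition $X=A\cup B$ is written down, the homotopy-theoretic part is a direct reuse of machinery already developed in the paper, and the enumeration reduces to one standard alternating binomial identity; in particular the discrete Morse theory invoked in the statement can be bypassed entirely.
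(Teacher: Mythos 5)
Your argument is correct, and it takes a genuinely different route from the paper. The paper proves this proposition by discrete Morse theory: an element matching on a vertex $a_1\in V_m$ followed by one on $b_1\in V_n$, after which the unmatched faces are exactly those $\sigma$ with $a_1\notin\sigma$, $|\sigma\cap V_m|=m-k$, $b_1\in\sigma$, $|\sigma\cap V_n|=n-k+1$; these are $\binom{m-1}{k-1}\binom{n-1}{k-1}$ critical cells, all of dimension $m+n-2k$, and Theorem~\ref{thm:seq-of-elt-match} together with Theorem~\ref{thm:DMT} gives the wedge of spheres. You instead observe the structural decomposition $\Delta^t_k(K_{m,n})=A\cup B$ into two cones meeting in $\operatorname{skel}_{m-k-1}(\Sigma_m)\ast\operatorname{skel}_{n-k-1}(\Sigma_n)$, and reuse Propositions~\ref{prop:TopFact2ndIsoThm} and~\ref{prop:quotient-by-contractible-homotopy} exactly as in the proof of Theorem~\ref{thm: tot-cut complex add simplicial vertex -Lei-M-S} to get $\Delta^t_k(K_{m,n})\simeq\susp(A\cap B)$; your Euler-characteristic count of the spheres in $A\cap B$ checks out, and you correctly quarantine the one degenerate case $k=m=n$ where the intersection is topologically empty. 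What your approach buys: it bypasses Morse theory entirely, it exhibits the complex explicitly as a suspension of a join of skeleta (so one could even skip the shellability/Euler step by citing the standard fact that the $j$-skeleton of $\Sigma_N$ is a wedge of $\binom{N-1}{j+1}$ $j$-spheres and that joins multiply wedge sizes and add dimensions plus one), and it makes the contractibility structure of the complex transparent. What the paper's Morse matching buys is uniformity: the same element-matching technique is the workhorse for the prism, cycle, grid and squared-cycle computations elsewhere in the paper, where no such clean cone decomposition is available.
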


 \begin{proof}  A set of vertices of the complete bipartite graph $K_{m,n}$ is independent if and only of it is  contained in one of the two sets $V_m$ or $V_n$. So the complement $\sigma^c$ of a face $\sigma \in \Delta^t_k(K_{m,n})$ has to contain at least $k$ vertices in the same set $V_m$ or $V_n$.
 Label the vertices of $G$ as $V_m = \{a_1, \dots, a_m\}$ and $V_n=\{b_1, \dots, b_n\}$.
 We construct an element matching $\mathcal{M}_1$ using the vertex $a_1$, followed by an element matching $\mathcal{M}_2$ using the vertex $b_1$.

 After the matching $\mathcal{M}_1$, the unmatched  faces $\sigma$ are those faces  of $\Delta^t_k(K_{m,n})$ that satisfy:
 $$a_1 \notin \sigma, \ \  \sigma \in \Delta^t_k(K_{m,n}), \ \ \sigma \cup \{a_1\} \notin \Delta^t_k(K_{m,n}).$$
 This means that  $\sigma$ contains exactly $m-k\ge 0$ elements of the set $V_m$, and at least $n-(k-1)$ elements of the set
 $V_n$.  We continue with the element matching $\mathcal{M}_2$ using the vertex $b_1$, and consider the faces $\sigma$ that remain unmatched. Any face $\sigma$ not matched in $\mathcal{M}_1$,  which does not contain vertex $b_1$,  is matched with a face $\sigma \cup \{b_1\}$ in $\mathcal{M}_2$. Faces that remain unmatched after $\mathcal{M}_2$ are exactly those faces $\sigma$ that contain vertex $b_1$, but cannot be matched with $\sigma \setminus \{b_1\}$ because $\sigma \setminus \{b_1\}$ was already matched in $\mathcal{M}_1$.
 This means that $\sigma$ contains exactly $n-(k-1)$ elements of the set $V_n$. We conclude that the faces that are unmatched after $\mathcal{M}_1 \cup \mathcal{M}_2$ are those faces $\sigma \in \Delta^t_k(K_{m,n})$ that satisfy:
 $$a_1 \notin \sigma, \ \  |\sigma \cap V_m| = m-k\ge 0, \ \
 b_1 \in \sigma, \ \ |\sigma \cap V_n| = n-(k-1).$$
 A straightforward calculation implies that there are $\binom{m-1}{k-1} \cdot \binom{n-1}{k-1}$ unmatched faces, and each of them contains exactly $m+n-2k +1$ vertices.  Since $\mathcal{M}_1 \cup \mathcal{M}_2$ is an acyclic matching (Theorem~\ref{thm:seq-of-elt-match}),  the complex $\Delta^t_k(K_{m,n})$ is homotopy equivalent to a CW complex with one $0$-cell (matched to the empty set) and  $\binom{m-1}{k-1} \cdot \binom{n-1}{k-1}$ cells of dimension $m+n-2k$. This implies the desired homotopy type \eqref{k_less_m}, and the special case  \eqref{k_eq_m} when $k=m$.  \end{proof}

 Theorem~\ref{thm:bipartite} now follows from
Proposition~\ref{prop:Bipartite:k=n},  Proposition~\ref{prop:Bipartite:m<k<n} and Proposition~\ref{prop:MorseMatchingBipartite:$k<=m$}.

%}

Next we consider the prism over a clique.
\begin{df}The {\em prism} $G_n$ over the complete graph $K_n$ is the Cartesian product of complete graphs $K_n\times K_2$. 
It has $2n$ vertices which we denote by $\{i^+, i^-: 1\le i\le n\}$, and edges 
$\{i^+j^+\}_{1\le i<j\le n}$, $\{i^- j^-\}_{1\le i<j\le n}$, $\{i^+ i^-\}_{1\le i\le n}$.
\end{df}
The independence number of $G_n$ is clearly 2, and hence the total cut complex is nonvoid only for $k=1,2$. We settle the case $\Delta^t_2(G_n), n\ge 2,$ again with a discrete Morse matching.

\begin{theorem}\label{thm:PrismCliquek=2My2ndDMM2022June21} The $(2n-3)$-dimensional (total) cut complex $\Delta^t_2(G_n), n\ge 2,$ has the homotopy type of a wedge of $(n-1)$ spheres $\bbS^{2n-4}$ of dimension $2n-4$.
\end{theorem}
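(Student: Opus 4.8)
The plan is to determine the homotopy type by an explicit discrete Morse matching, exactly in the style of the argument for complete bipartite graphs given above (Theorem~\ref{thm:bipartite}).

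First I would pin down the faces of $\Delta:=\Delta^t_2(G_n)$. Since each layer $\{1^+,\dots,n^+\}$, $\{1^-,\dots,n^-\}$ is a clique and $i^+i^-\in E(G_n)$, the independent $2$-sets of $G_n$ are exactly the pairs $\{i^+,j^-\}$ with $i\neq j$; hence $\sigma\subseteq V$ is a face of $\Delta$ iff its complement contains such a pair. Recording $\sigma$ by $A_\sigma=\{i:i^+\notin\sigma\}$ and $B_\sigma=\{j:j^-\notin\sigma\}$, this reads: $\sigma\in\Delta$ iff there exist $i\in A_\sigma$, $j\in B_\sigma$ with $i\neq j$; equivalently, $\sigma\notin\Delta$ iff $A_\sigma=\emptyset$, or $B_\sigma=\emptyset$, or $A_\sigma=B_\sigma=\{i\}$ for a single index $i$. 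In particular the facets are the sets $V\setminus\{i^+,j^-\}$ ($i\neq j$), so $\Delta$ is pure of dimension $2n-3$.

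Next I would take $\mathcal M_1$ to be the element matching on $\Delta$ using the vertex $1^+$ (match $\tau$ with $\tau\cup\{1^+\}$ whenever both are faces), and $\mathcal M_2$ the element matching using $1^-$ on the faces left unmatched by $\mathcal M_1$. Using the description above, a short case analysis should show that the faces surviving $\mathcal M_1$ are precisely those of two kinds: (i) $A_\sigma=\{1\}$ with $B_\sigma$ meeting $\{2,\dots,n\}$; and (iii) $\sigma=V\setminus\{1^+,i^+,i^-\}$ for some $i\in\{2,\dots,n\}$. In $\mathcal M_2$ every face of kind (i) gets matched --- pairing those with $1^-\notin\sigma$ with those with $1^-\in\sigma$ --- while every face of kind (iii) stays unmatched: such a $\sigma$ contains $1^-$, and deleting $1^-$ yields the face $V\setminus\{1^+,i^+,i^-,1^-\}$, whose $A$-set is $\{1,i\}$, so adjoining $1^+$ to it still gives a face and hence it was already matched by $\mathcal M_1$. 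Thus the critical cells of $\mathcal M_1\cup\mathcal M_2$ are exactly the $n-1$ faces $V\setminus\{1^+,i^+,i^-\}$, $i=2,\dots,n$, all of dimension $2n-4$.

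Finally, $\mathcal M_1\cup\mathcal M_2$ is acyclic by Theorem~\ref{thm:seq-of-elt-match}, so $\Delta^t_2(G_n)$ is homotopy equivalent to a CW complex with one $0$-cell (matched to the empty set) and $n-1$ cells of dimension $2n-4$; as there are no cells in the intermediate dimensions, this complex is $\bigvee_{n-1}\bbS^{2n-4}$, which for $n=2$ correctly reduces to $\bbS^0$ (there $G_2$ is the $4$-cycle). I expect the only real work to be the bookkeeping in the previous paragraph --- identifying the unmatched faces after each of the two element matchings, and in particular confirming that the kind-(iii) faces cannot be matched in $\mathcal M_2$, which rests on the observation that deleting $1^-$ from $V\setminus\{1^+,i^+,i^-\}$ produces a face that was already consumed by $\mathcal M_1$.
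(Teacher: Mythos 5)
Your proposal is correct and follows essentially the same route as the paper's proof: the same pair of element matchings on $1^+$ and then $1^-$, the same identification of the $n-1$ critical cells $V\setminus\{1^+,i^+,i^-\}$ of dimension $2n-4$, and the same appeal to Theorem~\ref{thm:seq-of-elt-match} for acyclicity. The $(A_\sigma,B_\sigma)$ bookkeeping is just a notational repackaging of the paper's description of non-faces, and your uniform treatment of $n=2$ (which the paper handles as a separate degenerate case) checks out.
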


\begin{proof}  We construct a Morse matching on the faces of the total cut complex and show that there are $(n-1)$ critical cells of dimension $2n-4$.   For example, the critical cells in $\Delta_2(G_3)$ will turn out to be $\{1^-, 2^+, 2^-\}, \{1^-, 3^+, 3^-\},$ while those for $\Delta_2(G_4)$ 
will be  $\{1^-, 2^+, 2^-, 3^+, 3^-\}, \{1^-, 2^+, 2^-, 4^+, 4^-\}$ and $ \{1^-, 3^+, 3^-, 4^+, 4\}.$ 

For each ordered pair $(i,j), 1\le i\ne j\le n,$ there is one facet  $F_{(i^+,j^-)^c}$ whose complement is the set  $\{i^+, j^-\}.$   In particular a subset $\sigma $ is a face of $\Delta^t_2(G_n)$ if and only if  $\sigma\subseteq F_{(i^+,j^-)^c}$ for some pair $(i,j), i\ne j$.  Hence  $\sigma$ is NOT a face if and only if 
\begin{equation*}\label{eqn:nonface-prismDelta2-0} 
i^+\notin \sigma \text{ for some } i \Rightarrow  j^-\in \sigma \text{ for all } j\ne i, \text{ and } 
 j^-\notin \sigma \text{ for some } j \Rightarrow i^+\in \sigma \text{ for all } i\ne j.
 \end{equation*}
 Let $V^+=\{1^+,2^+,\ldots,n^+ \}$ and $V^-=\{1^-,2^-,\ldots,n^- \}$. Equivalently, 
     $\sigma$ is not a face if and only if 
     \begin{equation}\label{eqn:nonface-prismDelta2}
\text{ either } V^+ \subseteq \sigma, \text{ or } V^- \subseteq \sigma, 
 \text{ or } 
 \sigma =V^+\cup V^- \setminus\{j^+,j^-\} \text{ for some }j.
\end{equation}
We use an element matching $\mathcal{M}_{1^+}$ first with the vertex $1^+$, followed by  the element matching $\mathcal{M}_{1^-}$ with vertex $1^-$. 
The degenerate case $n=2$ is easily seen to produce one unmatched 0-dimensional critical cell $\{1^-\}$ after these two element matchings, confirming the homotopy type in this case. The pairs in the matching are $\{(\emptyset, 1^+), (2^-,1^+2^-), (2^+,1^-2^+)\}$, leaving the 0-dimensional critical cell $\{1^-\}$. Note that for $n\ge 3$, $\{1^+,1^-\}$ is a face of $\mathcal{M}$, making the situation for $n=2$ special.

After the matching $\mathcal{M}_{1^+}$, the unmatched faces are $\sigma$ such that $1^+\notin \sigma$ and $1^+\cup\sigma$ is NOT a face.
Now match with vertex $1^-$. Given a face $\sigma$ unmatched by $\mathcal{M}_{1^+}$, we examine the two possible cases that would result in $\sigma$  NOT being  matched by $1^-$.
\begin{enumerate}
\item $1^-\notin \sigma$ but $1^-\cup \sigma$ is NOT a face: However, we then have $1^+, 1^-$ both not in $\sigma$, so $\sigma\subsetneq \{2^\pm,\ldots , n^\pm\}$. (Note that the latter set has full dimension, but is not a facet.) Hence for some $i,$ one of $i^+, i^-$ is not in $\sigma$. But then either $1^+\cup\sigma\in F_{(i^+, 1^-)^c}$ or $1^-\cup\sigma\in F_{(1^+, i^-)^c}$, contradicting the fact that $1^+\cup\sigma, 1^-\cup\sigma$ are not faces. 
This case is thus eliminated. 
\item $1^-\in \sigma$ but $\sigma$ cannot be matched with $\tau=\sigma\setminus 1^-$: This happens only if $\tau$ was already matched by $\mathcal{M}_{1^+}$, that is, $1^+\cup\tau$ is a face. 
\end{enumerate}

Hence the unmatched faces $\sigma$ after the element matchings $\mathcal{M}_{1^+},\mathcal{M}_{1^-}$, are precisely those described in Part (2) above. 

Since $1^+\cup (\sigma\setminus 1^-)$ is a face, there is at least one $i^+$, $i\ge 2$, such that 
$i^+\notin  1^+\cup (\sigma\setminus 1^-)$.  So   $i^+\notin 1^+\cup\sigma$ and thus 
$V^+\not\subseteq 1^+\cup \sigma$.
Similarly, since $\sigma$ is a face, there is at least one $l^-$, $l\ge 2$ such that $\ell^-\notin \sigma$. So $V^-\not\subseteq 1^+\cup\sigma$.

Since $1^+\cup\sigma$ is not a face, Equation~\eqref{eqn:nonface-prismDelta2} now implies that for some $j\ge 2$, $1^+\cup\sigma =V^+\cup V^- \setminus\{j^+,j^-\}$.
\begin{comment}
 an $i_0\ge 2$ such that 
 $(1^+\cup\sigma) \cap\{i_0^+\!, i_0^-\}=\emptyset$. 
Moreover, if there were a $j\ge 2$, $j\ne i$  such that $j^+$ or $j^-$ is NOT in $(1^+\cup\sigma),$ this would clearly make $1^+\cup\sigma$ a subset of  the facet $F_{(j^+, i_0^-)^c}$ (resp. $F_{(i_0^+, j^-)^c}$), contradicting the fact that $1^+\cup\sigma$ is not a face. It follows that % 
$1^+\cup\sigma=\{1^+, 1^-, \ldots, n^+, n^-\}\setminus\{i_0^+, i_0^-\}$.
\end{comment}
Conversely, it is clear that such faces $\sigma$ satisfy the conditions of Part (2).

Hence  there are exactly $(n-1)$ unmatched faces $\sigma$ all of the same size $(2n-3)$,  namely 
\[\sigma=\{1^+, 1^-, \ldots, n^+, n^-\}\setminus\{1^+, j^+, j^-\}, 2\le j\le n.\]
These are the critical cells of our Morse matching, and so, combining with a 0-cell because the empty set is matched with $1^+$, the statement about the homotopy type follows. 
\end{proof}

\begin{ex}\label{ex:simplicial-vertexS} 
We apply Theorem~\ref{thm: tot-cut complex add simplicial vertex -Lei-M-S} to compute the homotopy type of the graph $G'_n$ obtained by deleting one vertex, say $n^+$,  from the prism graph $G_n.$   %
Then $n^-$ is a simplicial vertex of $G'_n$, and thus, from the preceding theorem, we obtain the homotopy equivalence 

\[\Delta^t_2(G'_n)\simeq \susp \Delta^t_2( G_n\setminus\{n^+, n^-\}) =\susp \Delta^t_2( G_{n-1})\simeq\susp \bigvee_{n-2} \bbS^{2n-6}\simeq \bigvee_{n-2} \bbS^{2n-5}.\]
\end{ex}

We turn now to another important class of graphs.

\begin{df}[{\cite{WestGraphTheory1996}}] \label{defn:chordal}
    A graph is \emph{chordal} if it contains no induced cycle with more than 3 vertices.
    \end{df}
\begin{theorem}[Fr\"oberg {\cite{Froberg1990}, Eagon-Reiner \cite[Proposition~8]{EagonReiner1998}}]\label{thm:Froberg}
$\Delta_2^t(G) = \Delta_2(G)$ is vertex decomposable if and only if $G$ is chordal.
\end{theorem}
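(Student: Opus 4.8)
The plan is to prove the two directions separately, deducing the forward implication ($G$ chordal $\Rightarrow$ $\Delta_2^t(G)$ vertex decomposable) from the machinery already developed in the excerpt, and establishing the reverse implication ($\Delta_2^t(G)$ vertex decomposable $\Rightarrow$ $G$ chordal) by contradiction via the induced-subgraph result in Corollary~\ref{cor:induced subgraphs, total cut}.

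For the forward direction, I would argue by induction on $|V(G)|$, using the classical fact (Dirac) that every chordal graph has a simplicial vertex, and that deleting a simplicial vertex from a chordal graph leaves a chordal graph. Let $v$ be a simplicial vertex of $G$ with neighborhood $N(v)$, and set $H = G \setminus v$, which is chordal with fewer vertices. The base case handles small graphs (e.g.\ when $\Delta_2^t$ is void or a simplex). For the inductive step, Proposition~\ref{prop tot-cut complex add edge -Lei2} gives $\Delta_2^t(G) = (\Delta_2^t(H) * v) \cup \st_{\Delta_1^t(H)} N(v)$; since $k=2$ here, $\Delta_1^t(H)$ is the boundary of a simplex, so this star is just a simplex (a ridge of $\Delta_1^t(H)$), i.e.\ the subcomplex $\langle V(H) \setminus w \rangle$ for a suitable $w$ — actually the star of the clique $N(v)$ in the boundary of the $(|V(H)|-1)$-simplex, which is the full simplex on $V(H)$ minus one vertex from $N(v)$ if $N(v)\neq\emptyset$. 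The right vertex to decompose on should be $v$ itself: by Lemma~\ref{lem:link of faces, total cut}, $\lk_{\Delta_2^t(G)} v = \Delta_2^t(H)$, which is vertex decomposable by the inductive hypothesis (chordality of $H$), and by Lemma~\ref{lem:pure subcomplex avoiding one vertex, total cut}, $\del_{\Delta_2^t(G)} v = \st_{\Delta_1^t(H)} N(v)$ is a simplex, hence vertex decomposable, and is pure of the full dimension $|V(G)|-3$ (one checks the dimension count, using that $N(v)\neq\emptyset$ unless $H$ is also edgeless, a case handled separately). This verifies both conditions of Definition~\ref{def:vertex-decomposable}. One caveat: if $N(v) = \emptyset$, i.e.\ $v$ is isolated, Proposition~\ref{prop:adding-isolated-vertex} applies instead, and I would handle that case by a small separate argument (or fold it into the induction base).

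For the reverse direction, suppose $\Delta_2^t(G)$ is vertex decomposable but $G$ is not chordal. Then $G$ contains an induced cycle $C_m$ with $m \ge 4$; write $W$ for the complement of $V(C_m)$ inside $V(G)$. By Corollary~\ref{cor:induced subgraphs, total cut} (vertex decomposability is inherited by links, which correspond to induced subgraphs via Lemma~\ref{lem:link of faces, total cut}), if $\Delta_2^t(G)$ is vertex decomposable then so is $\Delta_2^t(G \setminus W) = \Delta_2^t(C_m)$ — strictly, vertex decomposability passes to links, and $\lk_{\Delta_2^t(G)} W = \Delta_2^t(C_m)$, so $\Delta_2^t(C_m)$ is vertex decomposable, and in particular Cohen--Macaulay. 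It therefore suffices to show $\Delta_2^t(C_m)$ is \emph{not} Cohen--Macaulay (equivalently not even connected in the right codimension, or has homology in too low a dimension) for $m \ge 4$. This is a direct computation: $\Delta_2^t(C_m)$ is $(m-3)$-dimensional, but one checks that it has reduced homology in a dimension strictly below $m-3$ — for instance $\Delta_2^t(C_4)$ is a disjoint union of two edges (disconnected, hence not Cohen--Macaulay), and for general $m$ one exhibits a nontrivial lower-dimensional homology class or shows the link of some face fails to be connected.

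The main obstacle I anticipate is the bookkeeping in the forward direction: correctly identifying $\del_{\Delta_2^t(G)} v$ as a simplex and confirming it has the full dimension $n-3$, which requires care when $N(v)$ is large (so that $V(H)\setminus w$ for $w\in N(v)$ is a facet) and a separate treatment of the edge cases where $G$ or $H$ is edgeless or where $v$ is isolated. The reverse direction is routine once one knows that vertex decomposability implies Cohen--Macaulayness and that Cohen--Macaulay complexes are connected in codimension one — the only content is the (small) homology computation for cycles, which is exactly the content of the classical Fröberg/Eagon--Reiner result and could alternatively just be cited.
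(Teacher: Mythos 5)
First, a point of orientation: the paper does not prove this statement at all --- it is quoted from Fr\"oberg and Eagon--Reiner and then used as the base case ($k=2$) of the induction in Theorem~\ref{thm:Lei-chordal-totk-vertdecomposable}. So you are supplying a proof where the paper supplies a citation; your overall architecture (induction on a simplicial vertex for the forward direction, restriction to an induced cycle for the converse) is the right one and mirrors the paper's general-$k$ argument. However, there is a concrete error in your forward direction. You identify $\del_{\Delta_2^t(G)} v = \st_{\Delta_1^t(H)} N(v)$ as ``a simplex,'' namely ``the full simplex on $V(H)$ minus one vertex from $N(v)$.'' This is wrong: the facets of $\Delta_1^t(H)$ are the sets $V(H)\setminus\{w\}$ for $w\in V(H)$, and the ones containing $N(v)$ are exactly those with $w\in V(H)\setminus N(v)$, so the star is generated by \emph{all} of these --- a simplex only when $N(v)$ misses a single vertex of $H$. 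For $G=P_4$ with $v=1$ and $N(v)=\{2\}$, your description gives $\langle\{3,4\}\rangle$, but $\{3,4\}$ is not even a face of $\Delta_2^t(P_4)$; the actual deletion is $\langle\{2,3\},\{2,4\}\rangle$. The fix is short but is genuinely a different argument from ``it is a simplex'': by \eqref{eqn:st-del-lk}, $\st_{\Delta_1^t(H)}N(v)=\langle N(v)\rangle * \lk_{\Delta_1^t(H)}N(v)$, and the link of $N(v)$ in the boundary of the simplex on $V(H)$ is the boundary of the simplex on $V(H)\setminus N(v)$; so the deletion is a join of two vertex decomposable complexes, hence vertex decomposable by Provan--Billera, and it is pure of dimension $|V(H)|-2=n-3$ as required. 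This is precisely the mechanism the paper uses for general $k$ (stars of faces in vertex decomposable complexes are vertex decomposable), and without it your verification of condition (1) of Definition~\ref{def:vertex-decomposable} is carried out on the wrong complex.

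Your reverse direction is sound in outline, with two small caveats. Corollary~\ref{cor:induced subgraphs, total cut} as stated is about shellability, so you should invoke the Provan--Billera fact that vertex decomposability passes to links directly (as you do parenthetically), after checking that $W=V(G)\setminus V(C_m)$ is indeed a face of $\Delta_2^t(G)$ --- it is, since $C_m$ with $m\ge 4$ contains an independent pair. The remaining computation need not be left open: Theorem~\ref{thm:total-cut-cycle} of the paper gives $\Delta_2^t(C_m)\simeq\bbS^{m-4}$ while $\dim\Delta_2^t(C_m)=m-3$, so by Theorem~\ref{thm:shell-implies-homotopytype} this complex is not shellable, hence not vertex decomposable, which closes the contradiction without any appeal to Reisner's criterion or Cohen--Macaulayness.
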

Recall that vertex decomposability implies shellability.  We show that for all $k$ the total $k$-cut complex of any chordal graph is vertex decomposable.  Chordal graphs are also studied in \cite{BDJRSX} where it is shown that 
the 3-cut complex of any chordal graph is shellable.  
\begin{comment}
The fact that the
2-cut complex of a chordal graph is shellable follows from work of 
Fr\"oberg \cite{Froberg1990}; see also \cite{EagonReiner1998}.
\end{comment}

One of the well-known properties of chordal graphs (see \cite{Golumbic1980}) 
is that there exists an \emph{elimination ordering}: that is, an ordering $v_1, v_2, ..., v_n$ of the vertices such that for all $i$ there are edges between all pairs of neighbors  of $v_i$ in $\{v_{i+1}, v_{i+2}, \dots , v_n\}$. In other words, vertex $v_i$ is a simplicial vertex of $G\setminus \{v_1,\dots , v_{i-1} \}$. For analogous work with the Alexander dual of $\Delta_k^t(G),$ see \cite[Theorem 1.5]{kim-lew}.

\begin{theorem}\label{thm:Lei-chordal-totk-vertdecomposable} If $G= (V,E)$ is a chordal graph, then $\Delta_k^t(G)$ is vertex decomposable, therefore shellable, for all $k\ge 2.$
\end{theorem}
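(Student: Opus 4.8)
The plan is to induct on the number of vertices $n = |V|$ and on $k$, using the elimination ordering of the chordal graph $G$. The base cases are easy: if $n$ is small enough that $\alpha(G) < k$, then $\Delta_k^t(G)$ is void, hence vertex decomposable by convention; and $\Delta_1^t(G)$ is a simplex boundary, which is vertex decomposable. So assume $k \ge 2$, that $G$ has a simplicial vertex $v$ (guaranteed since $G$ is chordal), and set $H = G \setminus v$, which is again chordal. We want to show $v$ is a shedding vertex for $\Delta = \Delta_k^t(G)$, i.e., that $\lk_\Delta v$ and $\del_\Delta v$ are vertex decomposable and $\del_\Delta v$ is pure of full dimension.

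First I would handle $\lk_\Delta v$. By Lemma~\ref{lem:link of faces, total cut}, $\lk_{\Delta_k^t(G)} v = \Delta_k^t(G \setminus v) = \Delta_k^t(H)$, and since $H$ is chordal on $n-1$ vertices, this is vertex decomposable by the inductive hypothesis (on $n$), provided it is nonvoid; if it is void, the link is void, still fine. Next, for $\del_\Delta v$: by Lemma~\ref{lem:pure subcomplex avoiding one vertex, total cut}, $\del_\Delta v = \st_{\Delta_{k-1}^t(H)} N(v)$, which that lemma also tells us is pure. The remaining points are (i) that $\del_\Delta v$ has the full dimension $n - k - 1$ of $\Delta$, which follows because $\st_{\Delta_{k-1}^t(H)} N(v)$ sits inside $\Delta_{k-1}^t(H)$ of dimension $(n-1)-(k-1)-1 = n-k-1$ and contains facets of $\Delta_k^t(G)$ not using $v$ (shown to exist in the proof of Lemma~\ref{lem:pure subcomplex avoiding one vertex, total cut}); and (ii) that $\st_{\Delta_{k-1}^t(H)} N(v)$ is vertex decomposable. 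For (ii): the star of a face $\sigma$ is $\sigma * \lk \sigma$ by~\eqref{eqn:st-del-lk}, so $\st_{\Delta_{k-1}^t(H)} N(v) = N(v) * \lk_{\Delta_{k-1}^t(H)} N(v)$; since $N(v)$ is a clique and hence a face of $\Delta_{k-1}^t(H)$ whenever that complex is nonvoid, and $N(v)$ is a simplex (so vertex decomposable), and the join of vertex decomposable complexes is vertex decomposable (a standard fact; e.g.\ Provan--Billera), it suffices that $\lk_{\Delta_{k-1}^t(H)} N(v)$ is vertex decomposable. But applying Lemma~\ref{lem:link of faces, total cut} iteratively (removing the vertices of $N(v)$ one at a time, each a face), $\lk_{\Delta_{k-1}^t(H)} N(v) = \Delta_{k-1}^t(H \setminus N(v))$, and $H \setminus N(v)$ is a chordal graph on strictly fewer than $n$ vertices, so this is vertex decomposable by the inductive hypothesis (on $n$, with $k$ replaced by $k-1$, and also covering $k-1 = 1$ via the simplex-boundary base case).

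A subtlety to address carefully is the possibility that $\Delta_{k-1}^t(H)$ is void or that $N(v) \notin \Delta_{k-1}^t(H)$: if $N(v)$ fails to lie in $\Delta_{k-1}^t(H)$ then $\del_\Delta v$ would be void, which forces $\Delta_k^t(G)$ to coincide with $\st_\Delta v$, contractible — but a cleaner route is to observe via Proposition~\ref{prop: tot-cut complex in Stars -Lei2} that whenever $\Delta_k^t(H)$ is nonvoid, so is $\st_{\Delta_{k-1}^t(H)} N(v)$, so we may reduce to the case $\Delta_k^t(H)$ void, where $\Delta_k^t(G)$ is contractible (Theorem~\ref{thm: tot-cut complex add simplicial vertex -Lei-M-S}) — but contractible is not the same as vertex decomposable, so one must instead argue directly in that degenerate range that $\Delta_k^t(G)$ is a cone (it has a vertex in every facet), hence a simplex-times-something, or simply note it is then itself a simplex or handle it as a separate base case. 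I expect this bookkeeping around the boundary/degenerate cases — precisely pinning down when each of $\Delta_k^t(H)$, $\Delta_{k-1}^t(H)$, and $\Delta_{k-1}^t(H\setminus N(v))$ is void versus nonvoid, and checking the purity/dimension claim in each — to be the main obstacle; the core inductive mechanism is otherwise a direct application of Lemmas~\ref{lem:link of faces, total cut} and~\ref{lem:pure subcomplex avoiding one vertex, total cut} together with the join-preserves-vertex-decomposability fact.
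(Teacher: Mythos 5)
Your proposal is correct and matches the paper's proof in all essentials: a double induction on $n$ and $k$ using a simplicial vertex $v$ from the elimination ordering, with $\lk_{\Delta} v = \Delta_k^t(G\setminus v)$ handled by the induction on $n$ (Lemma~\ref{lem:link of faces, total cut}) and $\del_{\Delta} v = \st_{\Delta_{k-1}^t(G\setminus v)} N(v)$ handled by the induction on $k$ together with the fact that stars (i.e., links followed by joins) preserve vertex decomposability (Lemma~\ref{lem:pure subcomplex avoiding one vertex, total cut} and Provan--Billera). The only differences are cosmetic: the paper anchors the induction at $k=2$ via the Eagon--Reiner theorem rather than at $k=1$ via the simplex boundary, and it does not spell out the void/degenerate bookkeeping you flag (which the paper likewise leaves implicit).
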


\begin{proof}
We prove this by induction on $k$ and $n=|V|$. If $G$ has a single vertex, then $\Delta_k^t(G)$ is the void complex and thus vertex decomposable for all $k$. If $k=2$, then $\Delta_2^t(G)=\Delta_2(G)$, which is vertex decomposable for any $n$ as proved in \cite[Proposition~8]{EagonReiner1998}. Now let $k>2$ and $n>1$. Assume that $\Delta_i^t(G)$ is vertex decomposable for all chordal 
graphs on $m$ vertices for all $i<k$, and for all $i =k$ and $m<n$.

Let $G$ be chordal with $n$ vertices. The vertex decomposition of $\Delta_k^t (G)$ will follow the elimination ordering, $v_1, \dots, v_n$. The vertex $v_1$ is a simplicial vertex in $G$, therefore the set of its neighbors, $N(v_1)$, forms a clique. We need to show that both subcomplexes $\lk_{\Delta^t_k(G)} v_1$ and $\del_{\Delta^t_k(G)} v_1$ are vertex decomposable. By the inductive hypothesis and Lemma \ref{lem:link of faces, total cut}, we know that $\lk_{\Delta^t_k(G)} v_1 = \Delta_{k}^t(G\setminus v_1)$ is vertex decomposable. 

Now it suffices to show that $\del_{\Delta^t_k(G)} v_1$ is vertex decomposable. By Lemma \ref{lem:pure subcomplex avoiding one vertex, total cut}, $\del_{\Delta^t_k(G)} v_1$ is the star of the face $N(v_1)$ in the complex $\Delta^t_{k-1}(G\setminus v_1)$. By the inductive hypothesis, $\Delta^t_{k-1}(G\setminus v_1)$ is vertex decomposable. Since vertex decomposability is preserved under taking links and joins (see \cite[Prop.~2.3,\;2.4]{Provan-Billera}), it is also preserved under taking stars. Together with the inductive hypothesis, this shows $\del_{\Delta^t_k(G)} v_1$ is vertex decomposable.
\end{proof}

\begin{theorem}\label{homotopytype-chordal-totk-Mark}
Let $G$ be a chordal graph with $n$ vertices and $c$ connected components, and $k\geq 1$. Then $\Delta_k^t(G)$ is homotopy equivalent to the wedge of $\binom{c-1}{k-1}$ spheres of dimension $n-k-1$ if $c\geq k$. If $c<k$ it is contractible if $G$ has an independent set of size $k$; otherwise it is the void complex.
\end{theorem}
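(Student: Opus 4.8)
I would prove this by induction on $n=|V(G)|$, feeding it into the two reduction theorems already established. First I would reformulate the conclusion homologically. By Theorem~\ref{thm:Lei-chordal-totk-vertdecomposable} the complex $\Delta_k^t(G)$ is shellable, so by Theorem~\ref{thm:shell-implies-homotopytype} it is void, contractible, or homotopy equivalent to a wedge of spheres all of dimension $n-k-1$, in which last case the number of spheres equals $\operatorname{rank}\tilde H_{n-k-1}(\Delta_k^t(G))$. Moreover $\Delta_k^t(G)$ is void exactly when $\alpha(G)<k$, and since a transversal of the connected components is an independent set we have $c\le\alpha(G)$, so $c\ge k$ already forces $\Delta_k^t(G)$ to be nonvoid. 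Hence it suffices to show: if $\alpha(G)\ge k$, then $\tilde H_*(\Delta_k^t(G))$ is free, concentrated in degree $n-k-1$, of rank $\binom{c-1}{k-1}$ (with the convention $\binom ab=0$ for $0\le a<b$); rank $0$ then forces contractibility and rank $\ge 1$ forces a wedge of exactly that many spheres.

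The case $k=1$ holds for every $n$ since $\Delta_1^t(G)$ is the boundary of the $(n-1)$-simplex, i.e.\ $\bbS^{n-2}=\bigvee_{\binom{c-1}{0}}\bbS^{n-1-1}$, and $n=1$ is trivial. So fix $n\ge 2$ and $k\ge 2$, and pick a simplicial vertex $v$ of $G$ (these exist for chordal graphs). If $v$ is not isolated, set $H=G\setminus v$: then $H$ is chordal on $n-1$ vertices, and it has the \emph{same} number of connected components as $G$, because the clique $N(v)$ keeps the component of $v$ connected once $v$ is deleted. Theorem~\ref{thm: tot-cut complex add simplicial vertex -Lei-M-S} gives $\Delta_k^t(G)\simeq\susp\Delta_k^t(H)$ when $\Delta_k^t(H)$ is nonvoid, and makes $\Delta_k^t(G)$ contractible or void otherwise. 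Combined with the inductive hypothesis for $H$ and the identity $\susp\bigl(\bigvee_m\bbS^d\bigr)=\bigvee_m\bbS^{d+1}$, this yields the claim for $G$; one checks that the void/contractible/wedge trichotomy transfers, using $c(H)=c$ and $\alpha(H)\le\alpha(G)\le\alpha(H)+1$ (in particular, $\Delta_k^t(H)$ void implies $c\le\alpha(H)<k$, landing in the $c<k$ clause).

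If instead $v$ is isolated, write $G=G'\sqcup v$ with $G'$ chordal on $n-1$ vertices and $c-1$ components; isolatedness forces $\alpha(G)=\alpha(G')+1$ and $c=c(G')+1\le\alpha(G')+1$. If $\alpha(G')<k$ then $\Delta_k^t(G')$ is void (so $c\le k$), Proposition~\ref{prop:adding-isolated-vertex} degenerates to $\Delta_k^t(G)=\Delta_{k-1}^t(G')$, and the inductive hypothesis for $G'$ with parameter $k-1$ concludes (the binomial count collapses because $c\le k$, and the void/contractible alternative matches since $\alpha(G)\ge k\iff\alpha(G')\ge k-1$). If $\alpha(G')\ge k$, I would invoke the long exact sequence of Theorem~\ref{thm:G-with-isolated-vertex-MarijaS}(b): by the inductive hypothesis $\Delta_{k-1}^t(G')$ and $\Delta_k^t(G')$ are wedges of spheres (or contractible) in the \emph{adjacent} dimensions $n-k-1$ and $n-k-2$, and since we already know $\Delta_k^t(G)$ is a wedge of $(n-k-1)$-spheres or contractible, the sequence collapses to $0\to\tilde H_{n-k-1}(\Delta_{k-1}^t(G'))\to\tilde H_{n-k-1}(\Delta_k^t(G))\to\tilde H_{n-k-2}(\Delta_k^t(G'))\to 0$, whence $\operatorname{rank}\tilde H_{n-k-1}(\Delta_k^t(G))=\binom{c-2}{k-2}+\binom{c-2}{k-1}=\binom{c-1}{k-1}$ by Pascal's rule (the boundary cases $c=k$ and $c<k$ handled by the convention $\binom ab=0$ for $a<b$). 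Alternatively, in the non-degenerate range one bypasses homology: the inclusion $\Delta_k^t(G')\hookrightarrow\Delta_{k-1}^t(G')$ is a map from an $(n-k-2)$-dimensional complex into an $(n-k-2)$-connected space, hence null-homotopic, so \eqref{eqn:add-isolated-vertex-when-contactibleS} gives $\Delta_k^t(G)\simeq\Delta_{k-1}^t(G')\bigvee\susp\Delta_k^t(G')$ and the same count.

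The difficulty is not a single hard step but the bookkeeping: one must verify the behaviour of the component count under each deletion, follow the three regimes $c>k$, $c=k$, $c<k$ through the inductive binomial identities with the convention $\binom ab=0$ for $a<b$, and dispose of the degenerate cases where some complex is $\bbS^{-1}$ or void — which occur precisely for edgeless induced subgraphs and are consistent with the known $\Delta_k^t(\overline K_n)\simeq\bigvee_{\binom{n-1}{k-1}}\bbS^{n-k-1}$ (note $\overline K_n$ has $n$ components). The one genuinely structural point to nail down is that deleting a non-isolated simplicial vertex leaves the number of connected components unchanged, since the count $\binom{c-1}{k-1}$ propagates through the entire induction via that fact.
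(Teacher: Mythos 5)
Your proposal is correct, and its engine is the same as the paper's: shellability (Theorem~\ref{thm:Lei-chordal-totk-vertdecomposable} plus Theorem~\ref{thm:shell-implies-homotopytype}) reduces everything to a single rank computation, and Theorem~\ref{thm: tot-cut complex add simplicial vertex -Lei-M-S} converts deletion of a non-isolated simplicial vertex into a suspension. Where you diverge is in how the binomial coefficient is produced. The paper reorders the elimination ordering so that one designated vertex from each component is eliminated last; consequently every vertex deleted along the way has nonempty neighborhood, the suspension step is the \emph{only} step needed, and after $n-c$ deletions one lands on $\overline{K}_c$, whose total cut complex is the $(c-k-1)$-skeleton of a simplex — the count $\binom{c-1}{k-1}$ is simply read off there. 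You instead allow an isolated simplicial vertex to occur mid-induction and dispatch it with Proposition~\ref{prop:adding-isolated-vertex} and the long exact sequence of Theorem~\ref{thm:G-with-isolated-vertex-MarijaS}(b) (or the wedge splitting via null-homotopy of the inclusion), recovering $\binom{c-1}{k-1}$ by Pascal's rule; this is in effect a re-derivation of the $\overline{K}_c$ computation, in the spirit of Example~\ref{ex:G-with-isolated-vertex-MarijaS}. Both routes work, and your checks of the degenerate regimes are sound, but note that your isolated-vertex case is avoidable: unless $G$ is edgeless (the known base case), some component has at least two vertices and hence contains a simplicial vertex with nonempty neighborhood, so you may always choose $v$ non-isolated — which is exactly the bookkeeping-saving observation behind the paper's reordering of the elimination ordering.
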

\begin{proof}
Let $G$ be a chordal graph with $n$ vertices and $c$ connected components, and $k\geq 1$. We know $\Delta_k^t(G)$ is shellable from Theorem \ref{thm:Lei-chordal-totk-vertdecomposable}, so it suffices to find its homology by Theorem \ref{thm:shell-implies-homotopytype}. As $G$ is chordal there is an elimination ordering of vertices;  furthermore, in any elimination ordering for $G$, for each component,
some vertex occurs latest in the order.  These $c$ vertices can be moved to the
end of the elimination ordering.  So we can assume that after $n-c$ eliminations, the resulting graph consists of $c$ isolated vertices.

Let $G_i$ be $G$ with the first $i$ vertices eliminated, that is, $G_i=G\setminus\{v_1,\ldots, v_i\}$.

If $c\geq k$, then $\Delta_k^t(G_{n-c})$ is nonempty and $\Tilde{H}_{c-k-1}(\Delta_k^t(G_{n-c}))=\mathbb{Z}^{\binom{c-1}{k-1}}$, and so by Theorem \ref{thm: tot-cut complex add simplicial vertex -Lei-M-S} $\Tilde{H}_{c-k-1+i}(\Delta_k^t(G_{n-c-i}))=\Tilde{H}_{c-k-i}(\Delta_k^t(G_{n-c-i+1}))=\mathbb{Z}^{\binom{c-1}{k-1}}$. In particular,
\[\Tilde{H}_{n-k-1}(\Delta_k^t(G))=\Tilde{H}_{n-k-1}(\Delta_k^t(G_{0}))=\mathbb{Z}^{\binom{c-1}{k-1}}.\]

If $c<k$, then $\Delta_k^t(G_{n-c})$ is the void complex. So by Theorem \ref{thm: tot-cut complex add simplicial vertex -Lei-M-S} $\Delta_k^t(G)$ is either contractible or the void complex, and it is only the void complex if $G$ has no independent sets of size $k$.
\end{proof}

For example, if $G$ is the path $P_n$ on $n$ vertices, $\Delta_k^t(P_n)$ is contractible if $2k\le n+1$ and is the void complex if $2k\ge n+2$.
More generally:
\begin{cor}\label{cor:trees} If $G$ is a tree on $n$ vertices with an independent set of size $k$, then $\Delta_k^t(G)$ is contractible.
\end{cor}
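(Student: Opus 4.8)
\emph{Proof proposal.} The plan is to obtain this as a direct corollary of Theorem~\ref{homotopytype-chordal-totk-Mark}. First I would record two elementary structural facts about a tree $G$. Since a tree contains no cycles whatsoever, it certainly contains no induced cycle on more than three vertices, so $G$ is chordal in the sense of Definition~\ref{defn:chordal}; and since a tree is connected, its number of connected components is $c=1$. Thus all hypotheses of Theorem~\ref{homotopytype-chordal-totk-Mark} are in place.

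Now I would simply invoke that theorem. Assuming $k\ge 2$, we have $c=1<k$, and by hypothesis $G$ has an independent set of size $k$; this is exactly the case ``$c<k$ and $G$ has an independent set of size $k$'' of Theorem~\ref{homotopytype-chordal-totk-Mark}, whose conclusion is that $\Delta_k^t(G)$ is contractible. (For $k=1$ one instead has $c=k$, and the same theorem gives $\Delta_1^t(G)\simeq\bbS^{n-2}$, so the statement is to be understood for $k\ge 2$; this is consistent with the path example $\Delta_k^t(P_n)$ noted just above, which is contractible precisely when $2k\le n+1$, i.e.\ for a genuine range of $k\ge 2$.)

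I do not expect any real obstacle here: the corollary is the special case $c=1$ of the chordal-graph computation, and the only things to check—that a tree is chordal and connected, and that the independent-set hypothesis rules out the void complex—are immediate. If a self-contained argument were wanted in place of citing Theorem~\ref{homotopytype-chordal-totk-Mark}, one could iterate Theorem~\ref{thm: tot-cut complex add simplicial vertex -Lei-M-S}: a leaf of a tree is a simplicial vertex (its unique neighbour is trivially a clique) and deleting a leaf leaves a tree, so along an elimination ordering one descends to a single vertex, for which $\Delta_k^t$ is void when $k\ge 2$; since $\Delta_k^t(G)$ is nonvoid by hypothesis, the suspension step of Theorem~\ref{thm: tot-cut complex add simplicial vertex -Lei-M-S} must land on a contractible complex at each stage rather than on the void complex. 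But this merely retraces the proof of Theorem~\ref{homotopytype-chordal-totk-Mark}, so the one-line appeal is the natural write-up.
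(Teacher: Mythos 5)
Your proposal is correct and is exactly how the paper intends the corollary to be read: it is stated immediately after Theorem~\ref{homotopytype-chordal-totk-Mark} with no separate proof, being the special case of a connected chordal graph ($c=1<k$) possessing an independent set of size $k$. Your observation that $k=1$ must be excluded (where the theorem instead gives a sphere) is a sensible clarification consistent with the paper's conventions.
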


The situation for cycle graphs is quite a bit more complicated.
Consider the total cut complex $\Delta_n^t(C_{2n})$. It is easy to see that there are only two facets, $\{2i \mid 1\le i\le n\}$ and $\{2i-1 \mid 1\le i\le n\}$, since the complement of any other set of size $n$ will contain at least two consecutive vertices. These facets are disjoint, and hence the $(n-1)$-dimensional total cut complex $\Delta^t_n(C_{2n})$ is homotopy equivalent to one 0-sphere.  Note that in \cite{BDJRSX} it is shown that  the ordinary cut  complex 
$\Delta_n(C_{2n})$ is shellable and homotopy equivalent to a wedge of $(n-1)$-dimensional spheres. 
  By counting the number of $k$-subsets of $[n]$ containing at most one of $i, i+1$ (modulo $n$) for each $i$, we see that the number of facets of $\Delta_k^t(C_n)$ is $\tfrac{n}{n-k}\binom{n-k}{k}$.

\begin{theorem}\label{thm:total-cut-cycle} For $n<2k$, $\Delta_k^t(C_n)$ is the void complex and hence shellable.
For $n\ge 2k\ge 4$,
$\Delta_k^t(C_n)$ is homotopy equivalent to a single sphere in dimension $n-2k$, and hence it is not shellable.
\end{theorem}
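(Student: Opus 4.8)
The plan is as follows. The case $n<2k$ is immediate: $\alpha(C_n)=\lfloor n/2\rfloor<k$, so $C_n$ has no independent set of size $k$, hence $\Delta_k^t(C_n)$ has no facets and is the void complex, which is shellable by convention. Now assume $n\ge 2k\ge 4$, label the vertices of $C_n$ cyclically as $1,\dots,n$, and for $0\le m\le n$ let $\Delta^{(m)}$ be the subcomplex of $\Delta_k^t(C_n)$ consisting of those faces contained in $\{m+1,\dots,n\}$, so that $\Delta^{(0)}=\Delta_k^t(C_n)$ and $\Delta^{(m)}=\del_{\Delta^{(m-1)}}(m)$. I would peel off the vertices $1,2,\dots,2k-2$ one at a time, showing the homotopy type is unchanged at each step, and then recognize $\Delta^{(2k-2)}$ directly.

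The peeling rests on the elementary fact that if $v$ is a vertex of a nonvoid finite simplicial complex $\Gamma$ with $\lk_\Gamma(v)$ contractible, then $\Gamma\simeq\del_\Gamma(v)$: since $\Gamma=\st_\Gamma(v)\cup\del_\Gamma(v)$ with intersection $\lk_\Gamma(v)$, and $\st_\Gamma(v)=v*\lk_\Gamma(v)$ is a contractible cone by~\eqref{eqn:st-del-lk}, Proposition~\ref{prop:TopFact2ndIsoThm} gives $\Gamma/\st_\Gamma(v)\cong\del_\Gamma(v)/\lk_\Gamma(v)$, and Proposition~\ref{prop:quotient-by-contractible-homotopy}, applied first to the contractible subcomplex $\st_\Gamma(v)$ and then to the contractible subcomplex $\lk_\Gamma(v)$, yields $\Gamma\simeq\Gamma/\st_\Gamma(v)\cong\del_\Gamma(v)/\lk_\Gamma(v)\simeq\del_\Gamma(v)$. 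For $v=1$, Lemma~\ref{lem:link of faces, total cut} identifies $\lk_{\Delta_k^t(C_n)}(1)=\Delta_k^t(C_n\setminus 1)=\Delta_k^t(P_{n-1})$, which is contractible by Corollary~\ref{cor:trees} because $P_{n-1}$ is a tree with an independent set of size $k$ (as $\alpha(P_{n-1})=\lceil(n-1)/2\rceil\ge k$). For $2\le m\le 2k-2$, a face $\tau\cup\{m\}$ of $\lk_{\Delta^{(m-1)}}(m)$ has complement $\{1,\dots,m-1\}\cup S$ with $S\subseteq\{m+1,\dots,n\}$, and in $C_n$ the path on $\{1,\dots,m-1\}$ is joined to $S$ only through the edge $\{1,n\}$. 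Using that $P_{m-1}$ has a maximum independent set avoiding vertex $1$ when $m$ is odd (and that every maximum independent set of $P_{m-1}$ uses vertex $1$ when $m$ is even), one checks that $\lk_{\Delta^{(m-1)}}(m)$ is $\Delta^t_{k-(m-1)/2}(P_{n-m})$ when $m$ is odd, and a cone (with apex $n$) over $\Delta^t_{k-m/2}(P_{n-m-1})$ when $m$ is even. A cone is contractible; and when $m\le 2k-3$ is odd the parameter $k-(m-1)/2$ is at least $2$ and $P_{n-m}$ is long enough (since $n\ge 2k$) to contain an independent set of that size, so Corollary~\ref{cor:trees} gives contractibility again. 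Thus $\Delta_k^t(C_n)\simeq\Delta^{(1)}\simeq\cdots\simeq\Delta^{(2k-2)}$.

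It remains to identify $\Delta^{(2k-2)}$, the complex of faces contained in the $(n-2k+2)$-element set $\{2k-1,\dots,n\}$. Here the arc $\{1,\dots,2k-2\}\cong P_{2k-2}$ has independence number $k-1$ and may meet the rest at both ends (via $\{2k-2,2k-1\}$ and via $\{1,n\}$). Using that $P_{2k-2}$ has a maximum independent set avoiding vertex $1$ and one avoiding vertex $2k-2$, one obtains, for $\sigma\subseteq\{2k-1,\dots,n\}$ with $T=\{2k-1,\dots,n\}\setminus\sigma$,
\[
\alpha(C_n\setminus\sigma)=(k-1)+\max\bigl(\alpha(P[T\setminus\{2k-1\}]),\ \alpha(P[T\setminus\{n\}])\bigr),
\]
where $P$ is the path on $\{2k-1,\dots,n\}$; and this is $\ge k$ exactly when $T\ne\emptyset$, i.e.\ when $\sigma$ is a proper subset of $\{2k-1,\dots,n\}$. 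Hence $\Delta^{(2k-2)}$ is the boundary of the $(n-2k+1)$-simplex on $\{2k-1,\dots,n\}$, so $\Delta_k^t(C_n)\simeq\partial\Delta^{n-2k+1}\cong\bbS^{n-2k}$. Finally, $\Delta_k^t(C_n)$ is pure of dimension $n-k-1$, which exceeds $n-2k$ since $k\ge 2$; by Theorem~\ref{thm:shell-implies-homotopytype} a shellable complex of this dimension would be contractible or a wedge of $(n-k-1)$-spheres, and $\bbS^{n-2k}$ is neither, so $\Delta_k^t(C_n)$ is not shellable.

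The step I expect to be the main obstacle is the bookkeeping in the two independence-number computations: determining precisely which subsets lie in $\lk_{\Delta^{(m-1)}}(m)$ and in $\Delta^{(2k-2)}$, and confirming that every path produced along the way is long enough for Corollary~\ref{cor:trees} (which works out exactly because $n\ge 2k$). One also treats $m=1$ separately via Lemma~\ref{lem:link of faces, total cut} and must be mildly careful with degenerate small paths when $n=2k$, but neither causes real trouble.
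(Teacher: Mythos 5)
Your proposal is correct, and it takes a genuinely different route from the paper. The paper proves this theorem by discrete Morse theory: a sequence of element matchings $\mathcal{M}_1,\dots,\mathcal{M}_{n-2k+2}$, a block decomposition of the complements $[n]\setminus(\sigma\cup\{1\})$, and a bijection between the unmatched faces whose first missing block has even order and those whose first missing block has odd order, leaving a single critical cell $\widetilde{\sigma}=\{2,\dots,n-2k+2\}$ of dimension $n-2k$. You instead iterate the elementary deletion principle ``$\lk_\Gamma(v)$ contractible $\Rightarrow \Gamma\simeq\del_\Gamma(v)$'' (itself assembled from Propositions~\ref{prop:TopFact2ndIsoThm} and~\ref{prop:quotient-by-contractible-homotopy}, exactly as in the proof of Theorem~\ref{thm: tot-cut complex add simplicial vertex -Lei-M-S}), peeling off vertices $1,\dots,2k-2$ and identifying the residue as the boundary of the simplex on $\{2k-1,\dots,n\}$. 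I checked the three computations you flag as the bookkeeping burden and they all go through: for odd $m$ the arc $\{1,\dots,m-1\}$ has a maximum independent set avoiding $1$, giving $\lk_{\Delta^{(m-1)}}(m)=\Delta^t_{k-(m-1)/2}(P_{n-m})$ with parameter $\ge 2$ and path long enough exactly when $n\ge 2k$; for even $m$ the unique maximum independent set of $P_{m-1}$ contains $1$, so any independent $k$-set using $n$ can be traded for one avoiding $n$, making the link a cone with apex $n$; and your formula $\alpha(C_n\setminus\sigma)=(k-1)+\max\bigl(\alpha(P[T\setminus\{2k-1\}]),\alpha(P[T\setminus\{n\}])\bigr)$ holds because every maximum independent set of $P_{2k-2}$ contains $1$ or $2k-2$ but there is one avoiding each, whence $\Delta^{(2k-2)}=\partial\Delta^{n-2k+1}$. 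What each approach buys: the paper's matching is uniform with the rest of its toolkit and adapts to the path graph, while yours avoids the acyclicity machinery entirely, needs only the two quotient propositions already in the paper, and produces an explicit $(n-2k)$-sphere as a subcomplex (the boundary of the simplex on the last $n-2k+2$ vertices) rather than an abstract critical cell. The non-shellability conclusion via Theorem~\ref{thm:shell-implies-homotopytype} is handled identically in both. Your write-up is a plan with some link computations sketched rather than fully expanded, but the deferred steps are all verifiable and none hides a gap.
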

\begin{proof}
Label the vertices of $C_n$ with $[n]=\{1,2,\ldots, n\}$ in standard cyclic order. An arbitrary set $\sigma \subseteq [n]$ is a face of $\Delta_k^t(C_n)$ if and only if $\alpha([n] \setminus \sigma) \ge k.$  In particular, if $n<2k$, then $\Delta_k^t$ is the void complex.  So assume $n\ge 2k$.

First, we introduce a few terms and observe some facts about the independent sets in $C_n$. For any $a \in [n]$ and $l \in \{1, \ldots, n-1\}$, the set of consecutive vertices $\{a, a+1, \ldots, a+l-1\}$ ($n+i$ is identified with $i$ for all $i \ge 1$) is called a {\em block of order $l$}. 
The independence number of a block is determined by its order. Namely, for any  block which is not equal to the entire set $[n]$,
\begin{equation}\label{alpha_for_block}
\alpha(\text{block of order } 2t-1) = t = \alpha(\text{block of order } 2t).
\end{equation}
Further, we say that two disjoint blocks are {\em independent} if there is no vertex from one block that is adjacent to a vertex from another block. In other words, two blocks are independent if they cannot be merged into one bigger block. Every nonempty $B \subsetneq [n]$ can be seen as  a disjoint union $B= B_1 \sqcup \ldots \sqcup B_m$, where $B_1, \ldots, B_m$ are pairwise independent blocks ($m\ge1$). This representation is unique (up to the order of blocks). From this representation we directly determine the independence number of $B$:  $\alpha(B) = \alpha(B_1) + \cdots + \alpha(B_m).$ We will also need the following  slightly more general additive relation. If we have a set $B \subsetneq [n]$, and a block $B'= \{a, a+1, \ldots, a+2t-1\}$ of order $2t$, such that $B \cap B' = \emptyset$ and  $\{a-1, a+2t\} \not\subseteq B$ (that is, $B'$ can be adjacent to some block from $B$, but it is not adjacent at both endpoints), then $\alpha(B \cup B')= \alpha(B) + \alpha(B') = \alpha(B) + t.$ We will refer to this fact as ``the addition of an even block.''

Thus, among any $2k -1$ vertices of $C_n$ ($n \ge 2k$) there are at least $k$ that are independent. Therefore, all $\sigma \subseteq [n]$ of cardinality  $|\sigma| \le n-(2k-1)$ belong to the complex $\Delta_k^t(C_n).$

We prove the theorem by using discrete Morse theory. First, construct the element matching $\mathcal{M}_1$  on the poset of $\Delta_k^t(C_n)$ using vertex 1. Let $K_1$ denote the set of unmatched faces after $\mathcal{M}_1$. An arbitrary face $\sigma \in \Delta_k^t(C_n)$ is in $K_1$ if and only if $ 1 \notin \sigma$  and $\sigma \cup \{1\} \notin  \Delta_3^t(C_n).$ This can be equivalently stated in terms of independence numbers:
$\alpha([n] \setminus \sigma) \ge k$, but $\alpha([n] \setminus (\sigma \cup \{1\} )) \le k-1.$ Removing one vertex from a set can decrease the independence number at most by 1, so we know precisely that $\sigma \in K_1$ if and only if $1 \not \in \sigma$ and the following two conditions hold:
\begin{gather}
\alpha\big([n] \setminus \sigma\big) = k \label{first_ind_cond} \\
\alpha\big([n] \setminus (\sigma \cup \{1\} )\big) = k-1 \label{second_ind_cond}
\end{gather}

For each $\sigma \in K_1$, consider the unique block representation: $[n] \setminus (\sigma \cup \{1\}) = B_1 \sqcup \cdots \sqcup B_m$ ($m \ge 1$), with blocks ordered by their smallest elements. The block $B_1$ which contains the smallest number in $[n] \setminus (\sigma \cup \{1\})$ will be called {\em the first block in}  $[n] \setminus (\sigma \cup \{1\})$, or {\em the first block missing from} $\sigma \cup \{1\}$. Also, $\sigma \in K_1$ will be called {\em regular} if the first block missing from $(\sigma \cup \{1\})$ does not contain $n$ (informally, the first block is not ``at the end''). An important observation is that there is only one face in $K_1$ which is not regular. Indeed, if $\sigma \in K_1$ is not regular, $n \in B_1$ implies that $m=1,$ and that 
$$[n] \setminus (\sigma \cup \{1\}) = B_1 = \{ n-l, n-(l-1), \ldots, n\},$$
for some $l$. From conditions (\ref{first_ind_cond}) and (\ref{second_ind_cond}), and relation (\ref{alpha_for_block}), we conclude that $l=2k -3.$ Therefore, the only not regular face is
\begin{equation*}%
\widetilde{\sigma} = [n] \setminus \{n-(2k-3), \ldots, n, 1\} = \{2, 3,\ldots, n-(2k-2)\}. 
\end{equation*}
All regular faces can be  partitioned into two categories, based on the parity of the first missing block:
$$\mathcal{O} = \{\sigma \in K_1 \mid \text{$\sigma$ is regular and the first block in $[n] \setminus (\sigma \cup \{1\})$ has odd order}\},$$
$$\mathcal{E} = \{\sigma \in K_1 \mid \text{$\sigma$ is regular and the first block in $[n] \setminus (\sigma \cup \{1\})$ has even order}\}.$$

We will proceed with element matchings (using vertices $2, 3, \ldots$). Our idea is to prove that in each pair we will have a simplex $\sigma$ from $\mathcal{E}$, and the simplex $\sigma \cup \{a\}$ from $\mathcal{O}$, where $a$ is the smallest number not in $\sigma \cup \{1\}$. First, we prove that there is a bijection from $\mathcal{E}$ to $\mathcal{O}$. Let $\sigma \in \mathcal{E}$ be an arbitrary simplex, and let 
$$[n]\setminus (\sigma \cup \{1\}) = {\underbrace{\{a, a+1, \ldots, a+2t-1\}}_{B_1}} \sqcup  B_2 \sqcup \cdots \sqcup B_m$$
be the block representation with the first block $B_1$  ($t \ge 1, a \ge 2$). We define:
$$f: \mathcal{E} \to \mathcal{O},\ \ f(\sigma) \coloneqq \sigma \cup \{a\}.$$
Then $[n]\setminus (f(\sigma) \cup \{1\})$  has the following block representation: 
$$[n]\setminus (f(\sigma) \cup \{1\}) =\{ a+1,\ldots, a+2t-1\} \sqcup B_2 \sqcup \cdots \sqcup B_m.$$
We need to prove that $f$ is a well-defined function, and then that it is a bijection.
\medskip

\begin{description}
%%%%%%%%%%%%%%%%%%%%%%% 
    \item[$f$ is well-defined] for an arbitrary $\sigma \in \mathcal{E}$ we show that $f(\sigma) \in \mathcal{O}.$ 
    
\noindent It is obvious that $f(\sigma)$ is regular and that the first missing block has odd order. It remains to prove that $f(\sigma)$ indeed belongs to $K_1$, i.e.,\ that $f(\sigma)$ satisfies conditions (\ref{first_ind_cond}) and (\ref{second_ind_cond}). First, since face $\sigma$ satisfies 
(\ref{first_ind_cond}), we know that: 
\begin{center}
    $  \alpha \big([n] \setminus \sigma\big) = \alpha\big(\{1\} \cup \{a, a+1, \ldots, a+2t-1\} \sqcup B_2 \sqcup \cdots \sqcup B_m \big) = k.$
\end{center}
For $f(\sigma)$ the corresponding independence number is:
\begin{center}
  $\alpha \big([n] \setminus f(\sigma)\big) = \alpha\big(\{1\} \cup \{a+1, \ldots, a+2t-1\} \sqcup B_2 \sqcup \cdots \sqcup B_m \big).$
\end{center}
For the sake of clarity, if $n \in B_m$ then $\{1\}$ is merged with $B_m$ both in $[n] \setminus \sigma$ and $[n] \setminus f(\sigma)$, but it does not influence the following reasoning about independent blocks. 
Since $\{a, a+1, \ldots, a+2t-1\}$ and $\{a+1, \ldots, a+2t-1\}$ have equal independence numbers, and the independence number is additive for independent blocks, for $a \ge 3$ it is obvious that 
\begin{equation}\label{first_for_s_fs}
    \alpha \big([n] \setminus f(\sigma)\big) = \alpha \big([n] \setminus \sigma\big) =k.
\end{equation}
The case $a=2$ is slightly different because $\{1\}$  is merged with $\{2, 3, \ldots, 2+2t-1\}$ in $[n] \setminus \sigma.$ However, we observed at the beginning  that the addition of an even block (not adjacent to the current set at both ends) properly increases the independence number, so we have:
\begin{equation*}
   \alpha \big([n] \setminus \sigma\big) = t + \alpha\big(\{1\} \sqcup B_2 \sqcup \cdots \sqcup B_m \big) = k,
\end{equation*}
which directly implies 
\begin{equation*}
   \alpha \big([n] \setminus f(\sigma)\big) = t + \alpha\big(\{1\} \sqcup B_2 \sqcup \cdots \sqcup B_m \big) = k.
\end{equation*}
This proves relation (\ref{first_for_s_fs}) for all $\sigma \in \mathcal{E}$, so $f(\sigma)$ satisfies (\ref{first_ind_cond}). The second relation (\ref{second_ind_cond}) for $f(\sigma)$ follows directly from (\ref{second_ind_cond}) for $\sigma$:
\begin{align*}
k-1 &= \alpha\big([n] \setminus (\sigma\cup \{1\})\big)\\
 &= \alpha\big(\{a, a+1, \ldots, a+2t-1\} \sqcup B_2 \sqcup \cdots \sqcup B_m \big) \\
 &= \alpha\big(\{a+1, \ldots, a+2t-1\} \sqcup B_2 \sqcup \cdots \sqcup B_m \big) \\
 &=\alpha\big([n] \setminus (f(\sigma)\cup \{1\})\big).
\end{align*}
This finishes the proof that $f$ is well-defined.
\medskip
\item[$f$ is bijective] it is obvious that $f$ is one-to-one, we only need to prove that $f$ is onto.  Consider an arbitrary  $\tau \in \mathcal{O}$ and the block representation for $[n] \setminus (\tau \cup \{1\})$:
$$[n]\setminus (\tau \cup \{1\}) = {\underbrace{\{b, b+1, \ldots, b+2t\}}_{B_1}} \sqcup B_2 \sqcup \cdots \sqcup B_m,$$
with the first block $B_1$, $b \ge 2$ and $t \ge 0.$ We first prove that $b \ge 3$. Suppose the contrary, that 
$[n]\setminus (\tau \cup \{1\}) = {\underbrace{\{2, 3, \ldots, 2+2t\}}_{B_1}} \sqcup B_2 \sqcup \cdots \sqcup B_m$ 
($2 \notin \tau$). Then $\alpha ([n]\setminus (\tau \cup \{1\})) = t+1 + \alpha(B_2) + \cdots + \alpha(B_m).$ Also, we have: $$[n]\setminus \tau =\{1, 2, \ldots, 2+2t\} \sqcup B_2 \sqcup \cdots \sqcup B_m.$$
This does not need to be a disjoint block representation (because the first block might be merged with $B_m$),  but since the independence number behaves properly in regard to the addition of an even block, we know: 
$$\alpha ([n]\setminus \tau) = t+1 + \alpha(B_2) + \cdots + \alpha(B_m) = \alpha ([n]\setminus (\tau \cup \{1\})),$$
which contradicts relations (\ref{first_ind_cond}) and (\ref{second_ind_cond}). Therefore, we proved that $b$ must be at least $3$. This allows us to define set $\sigma_{\tau}\coloneqq \tau \setminus \{b-1\}.$ Consider the block representation: 
$$[n]\setminus (\sigma_{\tau} \cup \{1\}) =\{b-1, b, b+1, \ldots, b+2t\} \sqcup B_2 \sqcup \cdots \sqcup B_m.$$
From the fact that $\tau$ belongs to $K_1$ it follows that $\sigma_{\tau}$ belongs to $K_1$ as well. (We omit the details because the reasoning is the same as in the proof that $f$ is well-defined.) Also, $\sigma_{\tau}$ is regular (because $\tau$ is regular) and the first block in $[n]\setminus (\sigma_{\tau} \cup \{1\})$ has even order, so $\sigma_{\tau} \in \mathcal {E}.$ Finally, we directly observe that $f(\sigma_{\tau}) = \tau,$ and the claim that $f$ is surjective is proved. 
\end{description}
We are ready to apply the element matchings $\mathcal{M}_2, \mathcal{M}_3, \ldots, \mathcal{M}_{n-2k +2}$ using the  vertices $2, 3, \ldots, {n-2k +2}$, respectively. We claim that for each $a \in \{2, 3, \ldots, n-2k +2\},$ matching $\mathcal{M}_a$ makes exactly the following pairs:
$$\{ (\sigma, f(\sigma)) = (\sigma, \sigma \cup \{a\}) \mid \sigma \in \mathcal{E} \text{ and } a \text{ is the minimal element in } [n]\setminus(\sigma \cup \{1\})\}.  $$
 Let $\sigma \in \mathcal{E}$ be an arbitrary face and let $a$ be  the minimal element in $[n]\setminus(\sigma \cup \{1\})$. A direct computation shows that $a \in \{2, 3, \ldots, n-2k+2\}.$ For $a\ge 3$, let $i \in \{2, \ldots, a-1\}$ be an arbitrary index. We want to prove that both $\sigma$ and $f(\sigma)$ are unpaired after $\mathcal{M}_i.$
From $i < a$ we know that $i \in \sigma$ and $i \in f(\sigma),$ so $\sigma$ cannot be matched with $\sigma \cup \{i\}$ and $f(\sigma)$ cannot be matched with $f(\sigma) \cup \{i\}.$ Further, $\sigma$ cannot be matched with $\sigma \setminus \{i\}$ because we will show that $\sigma \setminus \{i\}$ does not belong to $K_1$. Namely, consider the block representation for $[n] \setminus (\sigma \cup \{1\})$:
$$[n]\setminus (\sigma \cup \{1\}) =\{a, a+1, \ldots, a+2t-1\} \sqcup B_2 \sqcup \cdots \sqcup B_m.$$
From (\ref{second_ind_cond}) we get: $k-1 = \alpha([n]\setminus (\sigma \cup \{1\})) = t + \alpha(B_2) + \cdots + \alpha(B_m).$
Also, we have $[n] \setminus ((\sigma \setminus \{i\}) \cup \{1\}) = \{i\} \cup \{a, a+1, \ldots, a+2t-1\} \sqcup B_2 \sqcup \cdots \sqcup B_m.$ The addition of the even block $\{a, a+1, \ldots, a+2t-1\}$ increases the independence number properly, so we obtain:
$$
\alpha\big( [n] \setminus ((\sigma \setminus \{i\}) \cup \{1\})\big) = \alpha(\{i\}) + t + \alpha(B_2) + \cdots + \alpha(B_m) = 1+ k-1 = k,
$$
which contradicts relation (\ref{second_ind_cond}) for $\sigma \setminus \{i\},$ so $\sigma \setminus \{i\} \notin K_1.$ An analogous reasoning shows that set  $f(\sigma)\setminus \{i\}$ does not belong to $K_1$ either. 
This finishes the proof that  $\sigma$ and $f(\sigma)$ are unpaired after all element matchings $\mathcal{M}_2,  \ldots, \mathcal{M}_{a-1},$ so it is obvious that  these two faces will be matched together in $\mathcal{M}_{a}.$ 

We conclude that after all matchings $\mathcal{M}_2, \ldots, \mathcal{M}_{n-2k+2},$ all regular faces are paired off in pairs $(\sigma, f(\sigma)).$ There is only one critical cell --- the  irregular face $\widetilde{\sigma}$. By Theorem~\ref{thm:seq-of-elt-match}, the union of element matchings 
$\mathcal{M}_1 \cup \cdots \cup \mathcal{M}_{n-2k+2}$ is an acyclic matching on the face poset of $\Delta_k^t(C_n)$.  Therefore, the complex $\Delta_k^t(C_n)$  is homotopy equivalent to a CW complex with  one cell of dimension $n-2k$ (since $|\widetilde{\sigma}| = n-2k +1 $) and one $0$-cell (matched to the empty set). It follows that  $\Delta_k^t(C_n) \simeq S^{n-2k}.$ Also, since $\dim \Delta_k^t(C_n) = n-k-1$, $\Delta_k^t(C_n)$ is not a shellable complex.
\end{proof}
The Morse matching of the proof can be modified to give a Morse matching for the total cut complex of a path with $n$ vertices, which we observed is contractible when $2k\le n+1$.

\subsection{$\Delta_2^t$ for certain classes of graphs}\label{subsec:Delta2}

For a few classes of graphs, we have results for the total cut complex only for $k=2$.  

First we consider the \emph{grid graph} $G(m,n)$, defined as the Cartesian product of the paths $P_m$ and $P_n$ with $m$ and $n$ vertices respectively. Picture the $m\times n$ grid graph as the set of lattice points with coordinates $(i,j)$ ($1\le i\le m$, $1\le j\le n$), with horizontal and vertical line segments connecting them.  We find it convenient to define the graph with vertices labeled 1 to $mn$ as follows.  The vertex set of $G$ is $[mn]$, with vertices grouped in rows as $\{(i-1)n+j: 1\le j\le n\}$ for fixed $i$, $1\le i\le m$.  The edges are of two types:
``horizontal'' edges are pairs of vertices of the form $\{(i-1)n+j, (i-1)n+j+1\}$, where $1\le i\le m$ and $1\le j\le n-1$; ``vertical'' edges are pairs of the form $\{(i-1)n+j,in+j\}$, where $1\le i\le m-1$ and $1\le j\le n$. See Figure~\ref{fig:GridGraph33}.  
\begin{comment}
Write $G(m,n)$ for the $m\times n$ grid graph whose vertices are indexed by matrix-style coordinates $(i,j), 1\le i\le m, 1\le j\le n$ (so with $m$ rows and $n$ columns).   The edges of $G(m,n)$ are then $\{\{(i,j), (i+1,j)\}: {1\le i \le m-1}, {1\le j\le n}\}\cup\{\{(i,j), (i, j+1)\}: 1\le i\le m, 1\le j\le n-1\}.$ 

\textcolor{red}{Whoops, no, that is not how we label them in the proof.  I will work on this.  -Marge  7/16/22}
\end{comment}

From Definition~\ref{def:tot-cut-cplx}, 
when $k=1$ the total cut complex $\Delta^t_1 (G(m,n))$ is simply the boundary of an $(mn-1)$-simplex, so it is shellable and has the homotopy type of a single sphere in dimension $mn-2$. 

\begin{theorem}\label{thm:grid-k=2MyMorseMatching2021Sept} For $n, m\ge 2,$ the $(mn-3)$-dimensional total cut complex $\Delta_2^t (G(m,n))$ has  homotopy type
\[  \bigvee_{(m-1)(n-1)} \mathbb{S}^{mn-4}.\]
\end{theorem}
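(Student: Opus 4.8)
The plan is to realize $\Delta_2^t(G(m,n))$ as a combinatorial Alexander dual, read off its homology, and then promote that computation to a homotopy type. First I would observe that since $k=2$ we have $\Delta_2^t(G)=\Delta_2(G)=\Delta(G)^\vee$, the Alexander dual of the clique complex: the minimal non-faces of $\Delta(G)$ are exactly the non-edges of $G$, so the facets of $\Delta(G)^\vee$ are the complements of the independent $2$-sets. For $m,n\ge 2$ the grid graph $G(m,n)$ is triangle-free, so its clique complex is the graph itself — a connected $1$-dimensional complex on $mn$ vertices with $|E|=2mn-m-n$ edges. Hence $\tilde{H}_0(\Delta(G(m,n)))=0$ and $\tilde{H}_1(\Delta(G(m,n)))\cong\mathbb{Z}^{\beta_1}$ with $\beta_1=|E|-mn+1=(m-1)(n-1)$, all other reduced homology being zero.

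Next I would invoke Alexander duality in $\bbS^{mn-2}$. One checks first that $\Delta_2^t(G(m,n))$ has all of $[mn]$ as its vertex set (for $mn\ge 4$, deleting any single vertex of the grid still leaves a non-edge, so every singleton is a face), so the isomorphism $\tilde{H}_i(\Delta(G))\cong\tilde{H}^{mn-i-3}(\Delta_2^t(G))$ applies on the full ground set and shows that the reduced cohomology of $\Delta_2^t(G(m,n))$ is free, concentrated in degree $mn-4$, of rank $(m-1)(n-1)$. For a finite complex, cohomology that is free and concentrated in a single degree forces the homology to be free and concentrated in the same degree of the same rank (universal coefficients), so $\tilde{H}_{mn-4}(\Delta_2^t(G(m,n)))\cong\mathbb{Z}^{(m-1)(n-1)}$ and all other reduced homology vanishes.

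Finally I would upgrade this to a homotopy equivalence. The key structural point is that, since $G(m,n)$ is triangle-free, every subset of $[mn]$ of size at most $mn-3$ has a complement of size $\ge 3$ containing a non-edge, so $\Delta_2^t(G(m,n))$ contains the entire $(mn-4)$-skeleton of the simplex on $[mn]$. When $mn\ge 6$ we have $mn-4\ge 2$, so the $2$-skeleton of $\Delta_2^t(G(m,n))$ coincides with that of a simplex and is therefore simply connected; and a simply connected CW complex whose reduced integral homology is free and concentrated in a single degree $d\ge 2$ is homotopy equivalent to a wedge of $d$-spheres (take a map $\bigvee_{(m-1)(n-1)}\bbS^{mn-4}\to\Delta_2^t(G(m,n))$ inducing an isomorphism on $\tilde{H}_{mn-4}$, which exists by the Hurewicz theorem and is then a homotopy equivalence by Whitehead's theorem). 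This gives $\Delta_2^t(G(m,n))\simeq\bigvee_{(m-1)(n-1)}\bbS^{mn-4}$. The only case left is $m=n=2$ (as $mn=5$ cannot occur), where $G(2,2)=C_4$ and $\Delta_2^t(C_4)$ is the disjoint union of two $1$-simplices, homotopy equivalent to $\bbS^0$, matching $(m-1)(n-1)=1$ and $mn-4=0$.

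In this Alexander-dual approach the computations are essentially routine; the only points needing care are the Betti number count $\beta_1=(m-1)(n-1)$ and the small boundary case $m=n=2$. If instead one wants a proof in the discrete Morse theory style used elsewhere in this paper — which may be closer to the intended argument — then the main obstacle is the construction itself: one must exhibit an explicit acyclic matching on the face poset of $\Delta_2^t(G(m,n))$ whose critical cells are one vertex (matched to the empty set) and exactly $(m-1)(n-1)$ faces of dimension $mn-4$, the crux being to match away every top-dimensional face (equivalently, every complement of a non-edge of the grid) and then verify acyclicity of the whole matching.
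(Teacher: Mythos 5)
Your proof is correct, but it takes a genuinely different route from the paper. The paper constructs an explicit acyclic matching: a sequence of element matchings $M_1,\dots,M_{(m-1)n}$ on the face poset, after which the critical cells are one $0$-cell and exactly $(m-1)(n-1)$ cells of dimension $mn-4$ (these turn out to correspond to the horizontal edges of the grid outside the top row), and the conclusion follows from the discrete Morse theorem. Your argument instead identifies $\Delta_2^t(G(m,n))$ with the Alexander dual of the clique complex, which for a triangle-free graph is just the graph itself, so the whole homology computation collapses to the cycle rank $|E|-|V|+1=(2mn-m-n)-mn+1=(m-1)(n-1)$ of the connected graph $G(m,n)$; you then upgrade to homotopy type via the full $(mn-4)$-skeleton (hence simple connectivity for $mn\ge 6$) together with Hurewicz and Whitehead, and you correctly dispose of the only remaining case $m=n=2$ by hand (there is no $mn=5$ case). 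What each approach buys: the paper's Morse matching is elementary, stays within the toolkit used throughout the paper, and exhibits explicit critical cells; your duality argument is shorter, makes the count $(m-1)(n-1)$ conceptually transparent, and generalizes verbatim to show that for any connected triangle-free graph $H$ on $N\ge 6$ vertices with $E$ edges one has $\Delta_2^t(H)\simeq\bigvee_{E-N+1}\bbS^{N-4}$ — which also recovers the paper's Theorem~\ref{thm:total-cut-cycle} in the case $k=2$.
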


\begin{proof}  Let $n\ge m\ge 2$ and write $G=G(m,n).$   Label the vertices of $G$  as above.
\begin{comment}
graph from left to right and top to bottom, in matrix fashion, with 1 being in the top left corner and $(i-1)n+1$  the first vertex in the $i$th row from the top, $2\le i\le m.$
\end{comment}
Thus 2 and $n+1$ are the two neighbors of 1.

 We note two easy facts:
\begin{enumerate}
\item The number of edges in $G$ is $m(n-1)+n(m-1)=2mn-m-n,$ by counting horizontal and vertical edges in the grid graph.
\item The facets of $\Delta_2^t(G)$ are the sets $[mn]\setminus \{a,b\},$ where $\{a, b\}$ is NOT an edge.  Equivalently, there are exactly $2mn-m-n$  non-facets, namely  the sets $[mn]\setminus\{e_1, e_2\},$ where $\{e_1, e_2\}$ is an edge.
\end{enumerate}

We will construct a sequence of element matchings on the face poset of $\Delta_2^t(G),$ beginning with the vertex 1.
 First note that the fact that $G$ is triangle-free implies that $\Delta_2^t(G)$ contains all subsets of size less than or equal to $mn-3$.  This means that for the element matching $M_1$ with vertex 1, which matches a pair $(\sigma, \sigma\cup\{1\})$ where $1\notin \sigma$,  the faces that remain unmatched  must have size $(mn-2)$ or $(mn-3).$   The unmatched faces are those $\sigma$  where $1\notin \sigma$ and $\sigma\cup\{1\}\notin \Delta_2^t(G),$ so $\sigma\cup\{1\}=[mn]\setminus\{e_1, e_2\}$ where  $\{e_1, e_2\}$ is an edge.

Hence the unmatched faces are of the following two types:
\begin{description}
\item[Type 1] The size $(mn-3)$ subsets $A_{e_1,e_2}:=[mn]\setminus \{1\}\setminus \{e_1,e_2\}$ for each edge $\{e_1<e_2\}$ not containing 1, so that we may assume $1< e_1<e_2;$
there are precisely $2mn-m-n-2$ of these; and
\item[Type 2] The size $(mn-2)$ facets $B_a:=[mn]\setminus \{1,a\}$ for each NON-edge $ \{1,a\},$ i.e., $a\notin\{1,2,n+1\};$  there are $mn-3$ of these.
\end{description}

Note that each $a\ge 2$ yields two unmatched faces $A_{a, e_2},$ corresponding to $e_2=a+1$ provided $a\not\equiv 0 \mod n$ (i.e., $a$ is not in the right-most column)
and $e_2=a+n$ provided $a\le (m-1)n,$ (i.e., $a$ is not in the bottom row).  %

For each $a=2,\dots, (m-1)n,$ consider the sequence of element matchings $M_a.$ We have
\[\begin{cases} A_{a,a+1}\rightarrow A_{a,a+1}\cup\{a\}=B_{a+1}, & a+1\le n,\\
                         A_{a,a+n}\rightarrow A_{a,a+n}\cup\{a\}=B_{a+n}, & a\notin\{(m-1)n+1, \dots, mn\}.
\end{cases}\]
Thus if $a+1=qn+r, 0\le r\le n-1, q\ge 1,$ then $B_{a+1}=[mn]\setminus\{1,a+1\}$ is matched with $A_{a+1-n, a+1}.$
More generally, for  each $a\notin \{1,2,n+1\},$ we see that:
\begin{enumerate}
\item  If $a$ has two neighbors preceding it, $e_1<e_2<a,$ $B_a$ is matched with $A_{e_1,a}.$
\item If $a$ has only one neighbor preceding it, $e_1<a,$ this means $a$ is in column 1, $a\ne n+1,$ so $e_1=a-n$ and $B_a$ is matched by $A_{a-n,a}.$   In particular, $B_{mn}$ is matched with $A_{(m-1)n,n}.$
\item Thus, except for the first row, the matching uses up Type 1 faces corresponding to all the vertical edges in the grid graph.
\end{enumerate}

This establishes that all the Type 2 unmatched faces are matched by the sequence of element matchings $\{M_a\}, a=1,2,\dots, (m-1)n.$
The union of the matchings $\cup_{a=1}^{(m-1)n} M_a$ is an acyclic matching by Theorem~\ref{thm:seq-of-elt-match}.
 The remaining unmatched faces are all of Type 1, giving critical cells of size $(mn-4)$ along with one 0-cell (matched to the empty set). From the description of the Type 1 and Type 2 facets, we see that the number of critical cells of size $(mn-4)$ is
\[2mn-m-n-2- (mn-3)=mn-m-n+1=(m-1)(n-1).\]
Another way to obtain this count is to observe that the critical cells correspond to the $(n-1)$ horizontal edges in each of the $m$ rows of  the grid graph, except the top row: this gives precisely $(m-1)(n-1).$

It follows that the homotopy type of the cut complex $\Delta_2(G)$ is a wedge of $(m-1)(n-1)$ spheres of dimension $(mn-4),$ one lower than the top dimension.
\end{proof}

An example of the Morse matching used in the above theorem follows:

\begin{figure}[htb]
\begin{center}
\begin{tikzpicture}
\node (1) at (1,3) {$\circ$};
\node at (.8,3) {$\textcolor{blue}{1}$};
\node (2) at (2,3) {$\circ$};
\node  at (1.8,3) {$\textcolor{blue}{2}$};
\node (3) at (3,3) {$\circ$};
\node at  (2.8,3) {$\textcolor{blue}{3}$};
\node (4) at (1,2) {$\circ$};
\node at  (.8,2) {$\textcolor{blue}{4}$};
\node (5) at (2,2) {$\circ$};
\node at  (1.8,2) {$\textcolor{blue}{5}$};
\node (6) at (3,2) {$\circ$};
\node at  (2.8,2) {$\textcolor{blue}{6}$};
\node (7) at (1,1) {$\circ$};
\node at  (.8,1) {$\textcolor{blue}{7}$};
\node (8) at (2,1) {$\circ$};
\node at  (1.8,1) {$\textcolor{blue}{8}$};
\node (9) at (3,1) {$\circ$};
\node at  (2.8,1) {$\textcolor{blue}{9}$};
 \draw  (1) -- (2) -- (3); \draw  (4) -- (5) -- (6);   \draw (7) -- (8) -- (9); 
\draw   (1) -- (4) -- (7); \draw  (2) -- (5) -- (8);   \draw (3) -- (6) -- (9); 
%\draw (p1) -- (q);
%%%\draw (d) -- (p1) -- (e); \draw (p1) -- (f);
%\node (f1) at (4,-2.5) {$\scriptstyle{\textcolor{blue}{\{2m+1\}}}$};
%  \node (f) at (2,0) {$\scriptstyle{\{m+3\}}$};
%  \node (min) at (6,2.5) {$\scriptstyle{\textcolor{red}{m+[m+1]}}$};
%\draw (min) -- (q) --(c); \draw (a) -- (q) -- (b); 
  % \draw[preaction={draw=white, -,line width=6pt}] (a) -- (e) -- (c);
%\draw[preaction={draw=white, -,line width=6pt}] (max) -- (b) -- (e) -- (min) -- (f);
%\draw[preaction={draw=white, -,line width=6pt}]  (max) -- (c);
\end{tikzpicture}
\end{center}
\caption{The Grid Graph  $G(3,3)$} 
\label{fig:GridGraph33}
\end{figure}
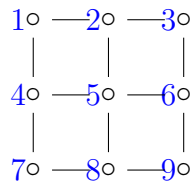

\begin{ex}\label{ex:forDMTgridgraph} The unmatched codimension-one faces of  the cut complex $\Delta_2(G)$ for the  grid graph $G=G(3,3)$, see Figure~\ref{fig:GridGraph33},    
 are indicated by an asterix:
\begin{align*}  A_{23}&=V\setminus\{1\}\setminus\{2,3\} \rightarrow B_3=V\setminus\{1,3\}\\
                       A_{25}&=V\setminus\{1\}\setminus\{2,5\} \rightarrow B_5=V\setminus\{1,5\}\\
                       A_{36}&=V\setminus\{1\}\setminus\{3,6\} \rightarrow B_6=V\setminus\{1,6\}\\
                     A_{45}&=V\setminus\{1\}\setminus\{4,5\}   (*)\\
                      A_{47}&=V\setminus\{1\}\setminus\{4,7\} \rightarrow B_7=V\setminus\{1,7\}\\
                       A_{56}&=V\setminus\{1\}\setminus\{5,6\} (*)\\
                       A_{58}&=V\setminus\{1\}\setminus\{5,8\} \rightarrow B_8=V\setminus\{1,8\}\\
                       A_{69}&=V\setminus\{1\}\setminus\{6,9\} \rightarrow B_9=V\setminus\{1,9\}\\
                       A_{78}&=V\setminus\{1\}\setminus\{7,8\} (*)\\
                       A_{8,9}&=V\setminus\{1\}\setminus\{8,9\} (*)
\end{align*}
\end{ex}

Next we consider  graphs obtained by adding edges to cycles.
\begin{df}  The  {\em squared cycle graph} $W_n$ is the graph with vertex set $[n],$ and edge-set
$\{ (i, i+1 \mod n), (i, i+2 \mod n)\},$  $i=1, \dots n.$  
\end{df}

We label the vertices of $W_n$ in order with $\{1,2,\dots, n\}.$
If $n\le 5,$ $W_n$ is the complete graph $K_n.$ 
\begin{prop}\label{prop:MarijaDMT-Delta2-wreath-graph}
The 3-dimensional total cut complex $\Delta_2^t(W_6)$ is homotopy equivalent to~$\mathbb{S}^1.$
For $n\ge 7$ the $(n-3)$-dimensional total cut complex $\Delta_2^t(W_n)$ has the homotopy type   of~$\mathbb{S}^{n-4},$ one sphere in dimension one lower than the top. 
 Hence for $n\ge 6,$ the total cut complex $\Delta_2^t(W_n)$ is not shellable.
\end{prop}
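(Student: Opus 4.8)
\emph{Plan.} First I would record the combinatorics. Since $k=2$, $\Delta_2^t(W_n)=\Delta_2(W_n)$, and $\sigma\subseteq[n]$ is a face precisely when $[n]\setminus\sigma$ is not a clique of $W_n$. For $n\ge 7$ the cliques of $W_n$ are exactly the subsets of the consecutive triples $\{i,i+1,i+2\}$ (indices mod $n$), so $W_n$ has clique number $3$; hence $\Delta_2^t(W_n)$ is pure of dimension $n-3$ with facets the complements $[n]\setminus\{i,j\}$ of the non-edges (pairs at cyclic distance $\ge 3$), and it contains the whole $(n-5)$-skeleton of the $(n-1)$-simplex on $[n]$. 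I would also note $\Delta_2^t(W_n)=\Delta(W_n)^\vee$, the Alexander dual of the clique complex.

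For $n\ge 7$ I would compute the homotopy type as follows. Each vertex $i$ of $\Delta(W_n)$ has link the arc $(i-2)\,(i-1)\,(i+1)\,(i+2)$, every edge $\{i,i+1\}$ lies in two triangles and every edge $\{i,i+2\}$ in exactly one, so $\Delta(W_n)$ is a compact connected surface with nonempty boundary and Euler characteristic $n-2n+n=0$; counting boundary components (one circle when $n$ is odd, two when $n$ is even) identifies it as a M\"obius band or an annulus, in either case $\Delta(W_n)\simeq\mathbb S^1$. By the Alexander duality recorded above, $\tilde{H}_j(\Delta_2^t(W_n))\cong\tilde{H}^{\,n-j-3}(\Delta(W_n))$, which is zero for all $j$ except $\tilde{H}_{n-4}(\Delta_2^t(W_n))\cong\mathbb Z$. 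Since $n-5\ge 2$, $\Delta_2^t(W_n)$ contains the full $2$-skeleton of a simplex and hence is simply connected; a simply connected CW complex with the homology of $\mathbb S^{m}$, $m=n-4\ge 3$, is homotopy equivalent to $\mathbb S^{m}$ by Hurewicz and Whitehead. This gives $\Delta_2^t(W_n)\simeq\mathbb S^{n-4}$. (One can alternatively obtain the same conclusion by a discrete Morse matching built from element matchings on the vertices $1,2,3,\dots$ in cyclic order, in the style of the proofs of Theorems~\ref{thm:total-cut-cycle} and~\ref{thm:grid-k=2MyMorseMatching2021Sept}, producing one critical cell of dimension $n-4$.)

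The case $n=6$ is genuinely different: $W_6\cong K_{2,2,2}$ is the octahedron, its clique complex is the octahedron boundary $\simeq\mathbb S^2$, and $\Delta_2^t(W_6)$ has only the three facets $\{2,3,5,6\}$, $\{1,3,4,6\}$, $\{1,2,4,5\}$ (complements of the non-edges $\{1,4\},\{2,5\},\{3,6\}$). These are $3$-simplices whose pairwise intersections are the three disjoint edges $\{3,6\}$, $\{2,5\}$, $\{1,4\}$ and whose triple intersection is empty, so by the nerve lemma $\Delta_2^t(W_6)$ is homotopy equivalent to the nerve of this cover, the boundary of a triangle, i.e.\ $\mathbb S^1$. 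Finally, for every $n\ge 6$, $\Delta_2^t(W_n)$ is pure of dimension $n-3$ but homotopy equivalent to $\mathbb S^{n-4}$ (to $\mathbb S^1$ when $n=6$), which is neither contractible nor a wedge of $(n-3)$-spheres, so by Theorem~\ref{thm:shell-implies-homotopytype} it is not shellable.

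I expect the main obstacle to be pinning down the topology of the $2$-complex $\Delta(W_n)$ for $n\ge 7$: one has to verify carefully that every vertex link is an arc (this is exactly where $n\ge 7$ enters, the cyclic wrap-around creating extra triangles for $n\le 6$) and then count the boundary circles correctly to recognize an annulus or a M\"obius band, hence the homology of $\mathbb S^1$. If one instead runs the discrete Morse argument, the obstacle is the delicate bookkeeping of the ``blocks'' of $[n]\setminus(\sigma\cup\{1\})$ around the cycle --- tracking when the distinguished block lies at the end of the cyclic order and when blocks merge across vertex $1$ --- together with checking acyclicity and that a single critical cell survives, a more intricate version of the analysis carried out for $\Delta_k^t(C_n)$ in Theorem~\ref{thm:total-cut-cycle}.
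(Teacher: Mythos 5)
Your proof is correct, but it takes a genuinely different route from the paper's. The paper handles $n\ge 7$ by an explicit discrete Morse argument: after the element matching at vertex $1$, the unmatched faces are catalogued by hand (complements of $\{1\}$ together with one, two, or three cyclically consecutive vertices), and further element matchings at $2,\dots,n-3$ and $n-1$ pair off all but a single critical cell $[n]\setminus\{1,n-3,n-1\}$ of dimension $n-4$; Theorem~\ref{thm:seq-of-elt-match} then yields the homotopy type directly. You instead exploit the identity $\Delta_2^t(W_n)=\Delta(W_n)^\vee$: the clique complex of $W_n$ for $n\ge 7$ is a compact connected surface with nonempty boundary and Euler characteristic $n-2n+n=0$ (your verification that all maximal cliques are the consecutive triples and that every vertex link is an arc is exactly where $n\ge 7$ is used, and is sound), hence $\Delta(W_n)\simeq\mathbb{S}^1$; Alexander duality then gives the reduced homology of $\mathbb{S}^{n-4}$, and since the complex contains the full $(n-5)$-skeleton with $n-5\ge 2$ it is simply connected, so Hurewicz and Whitehead upgrade the homology computation to a homotopy equivalence. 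This last step is essential and you include it correctly: Alexander duality alone only determines (co)homology, whereas the paper's Morse matching produces the homotopy type without any simple-connectivity hypothesis (but at the cost of delicate bookkeeping of unmatched faces). For $n=6$ your nerve-lemma argument on the three facets $\{2,3,5,6\},\{1,3,4,6\},\{1,2,4,5\}$ is if anything more explicit than the paper's, which simply asserts the homotopy type $\mathbb{S}^1$ from the face lattice. The non-shellability conclusion via Theorem~\ref{thm:shell-implies-homotopytype} is drawn identically in both.
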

\begin{proof} If $n=6,$ $\Delta_2^t(W_6)$ has facets $\{ 1245, 2356, 3461\}$, and it is not hard to see that it is homotopy equivalent to $\mathbb{S}^1$ from its face lattice, for example. 

For $n\ge 7,$ we exhibit a Morse matching.

We denote the set of vertices of $W_n$ by $[n]$ and label the vertices by $1, 2, \dots, n$ in cyclic order. As in the case of cycles, each facet of the complex $\Delta_2^t(W_n)$ has $n-2$ vertices, so $\dim (\Delta_2^t(W_n))=n-3$. Also, among any four vertices of $W_n$ (for $n \ge 7$), there are at least two which are not connected, so every subset of $\{1, 2, \dots, n\}$ of cardinality at most   $n-4$ is a face of $\Delta_2^t(W_n)$.

Let $\mathcal{M}_1$ be the element matching on the face poset of $\Delta_2^t(W_n)$ using vertex 1. A face $\sigma \in \Delta_2^t(W_n)$ is unmatched if and only if $1 \notin \sigma$ and
 $\sigma \cup \{1\} \notin \Delta_2^t(W_n).$ Since the complete $(n-5)$-skeleton is  in  the complex $\Delta_2^t(W_n)$, the  unmatched faces can contain $n-4, n-3$ or $n-2$ vertices.

\begin{itemize}
\item % n-4
The unmatched faces of cardinality $n-4$ are complements of the union of $\{1\}$ and a set of three adjacent vertices $k, k+1, k+2$. There are $n-3$ faces of this type:
 $$A_k= [n] \setminus \{1,k, k+1, k+2\}, \ \text{ for } \ k \in \{2,3, \dots, n-2\}.$$

\item % n-3
There are two types of unmatched faces of cardinality $n-3$. The first type are faces that are complements of the union of $\{1\}$ and a set of two adjacent vertices $k, k+1$.  There are $n-4$ faces of this type, and we denote them by:
 $$B_k= [n] \setminus \{1, k+1, k+2\}, \ \text{ for } \ k \in \{2, 3,\dots, n-3\}.$$
 Observe that we do not have $[n] \setminus \{1, 2, 3\}, [n] \setminus \{1, n-1, n\}$ and $[n] \setminus \{n, 1, 2\}$ here because these sets are not faces of the complex.
\item % n-2
There are $n-5$ faces of cardinality  $n-2$ which do not contain vertex 1, and all of them are unmatched:
$$D_k= [n] \setminus \{1, k+2\}, \ \text{ for } \ k \in \{2, 3,\dots, n-4\}.$$
\end{itemize}

We proceed with the following element matchings:
$$\mathcal{M}_2, \mathcal{M}_3, \dots, \mathcal{M}_{n-3},$$
where  $\mathcal{M}_k$ denotes the element matching using vertex $k \in \{2, 3, \dots, n-3\}.$ For all $k \le n-4$, the matching $\mathcal{M}_k$ results in exactly two pairings:
$$(A_k, B_k) \text{ and } (C_k, D_k),  \text{ because }  A_k \cup \{k\}= B_k \text{ and } C_k \cup \{k\}= D_k.$$
For $k=n-3$, the matching $\mathcal{M}_{n-3}$ makes only one pairing: $(A_{n-3}, B_{n-3})$. After these matchings, the unmatched faces are:
 $$A_{n-2} = [n] \setminus \{1, n-2, n-1, n\},$$
 $$C_{n-3} = [n] \setminus \{1, n-3, n-1\},$$
 $$C_{n-2} = [n] \setminus \{1, n-2, n\}.$$
 We apply one more element matching  $\mathcal{M}_{n-1}$ using element $n-1$, which gives the last pairing $(A_{n-2}, C_{n-2})$.

 By Theorem~\ref{thm:seq-of-elt-match}, the union of element matchings $\mathcal{M}_1 \cup \mathcal{M}_2 \cup \cdots \mathcal{M}_{n-3} \cup \mathcal{M}_{n-1}$ is an acyclic matching on the face poset of complex $\Delta_2^t(W_n)$. There is one critical cell $C_{n-3}$  and one $0$-cell (matched to the empty set).  Since the set $C_{n-3}$ has cardinality $n-3$, complex $\Delta_2^t(W_n)$ is homotopy equivalent to a sphere of dimension $n-4$.
\end{proof}

\section{Further directions}
In \cite{BDJRSX} we have further results on the cut complexes of  these last two  classes of graphs.  Here we present some data and conjectures on total cut complexes for these graphs.

In the last section we found the homotopy type of $\Delta_2^t(W_n)$, for the squared cycle graph.

 The following conjecture is supported by Sage data for $k\le 7$ and $n\le 21$:
\begin{conj}\label{conj:total-cut-wreath}
The $(n-k-1)$-dimensional complex $\Delta_k^t(W_n)$ is the void complex if $n\le 3k-1$. Otherwise it is homotopy equivalent to a single sphere in dimension 
\[\begin{cases} 2i+1, & n=3k+i, \ 0\le i\le k-1,\\
               2k+i, & n=4k+i, \ i\ge 0,
\end{cases} \]
and hence it is not shellable.
\end{conj}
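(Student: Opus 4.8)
The plan is to prove the homotopy type by discrete Morse theory, extending the arguments used for cycles in Theorem~\ref{thm:total-cut-cycle} and for $k=2$ in Proposition~\ref{prop:MarijaDMT-Delta2-wreath-graph}. First dispose of the void case: the independence number of $W_n$ is $\lfloor n/3\rfloor$, so $\Delta_k^t(W_n)$ is void exactly when $\lfloor n/3\rfloor<k$, i.e.\ $n\le 3k-1$; assume from now on $n\ge 3k$, so $\dim\Delta_k^t(W_n)=n-k-1$. As in the cycle case one records at the outset that $\Delta_k^t(W_n)$ contains the full $(n-3k+1)$-skeleton of the simplex on $[n]$: if $|\sigma|\le n-3k+2$ then $[n]\setminus\sigma$ is a proper vertex subset, hence induces a disjoint union of squared paths, and a union of squared paths on $m$ vertices has independence number at least $\lceil m/3\rceil\ge\lceil(3k-2)/3\rceil=k$. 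Thus all critical cells of the Morse matching will sit in the codimension range forced by the claimed formula.

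The matching is built from element matchings $\mathcal M_1,\mathcal M_2,\dots$ using the vertices $1,2,\dots$ in cyclic order, together with a short tail of auxiliary matchings at the end (as happens in Proposition~\ref{prop:MarijaDMT-Delta2-wreath-graph}, where $\mathcal M_{n-1}$ is appended). After $\mathcal M_1$ the unmatched faces $\sigma$ are characterised by $1\notin\sigma$, $\alpha([n]\setminus\sigma)=k$ and $\alpha([n]\setminus(\sigma\cup\{1\}))=k-1$. Following the template of Theorem~\ref{thm:total-cut-cycle}, one attaches to such a $\sigma$ the ``first block'' of $[n]\setminus(\sigma\cup\{1\})$ and tries to pair $\sigma$ with $\sigma\cup\{a\}$, where $a$ is the least element of $[n]\setminus(\sigma\cup\{1\})$, whenever this move preserves both independence conditions. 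The genuinely new feature for $W_n$ is that the controlling invariant is modulo $3$ rather than modulo $2$: a block of $\ell$ consecutive vertices has independence number $\lceil\ell/3\rceil$, so deleting its first vertex keeps $\alpha$ unchanged precisely when $\ell\not\equiv 1\pmod 3$. Accordingly the unmatched faces split into three classes according to the residue of the first block; residue-$0$ and residue-$2$ faces are matched directly by $\sigma\mapsto\sigma\cup\{a\}$, and the residue-$1$ faces must be absorbed by a second layer of moves, exactly as the pairs $(C_k,D_k)$ occur alongside $(A_k,B_k)$ in Proposition~\ref{prop:MarijaDMT-Delta2-wreath-graph}. An extra subtlety absent from the cycle case is that the connected components of a vertex subset of $W_n$ need not be intervals --- two vertices at distance $2$ lie in one component even though the vertex between them is absent, for example $\{k,k+2\}$ --- so one must phrase the invariant in terms of the independence numbers of components rather than their orders, and the bookkeeping of the effect of each elementary move on $\alpha$ becomes correspondingly more delicate.

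Granting that the matching pairs off every face except one, the surviving critical cell $\widetilde\sigma$ is the unique ``irregular'' face whose first missing block sits at the end of the cyclic order, and its cardinality is where the two regimes appear. For $3k\le n\le 4k-1$ the complement of $\widetilde\sigma$ is a ``compressed'' configuration on $6k-2-n$ vertices (all of them consecutive when $n=3k$), so $|\widetilde\sigma|=2n-6k+2$ and $\dim=2(n-3k)+1$; for $n\ge 4k$ the complement is the ``fully spread'' arithmetic progression $\{1,n-1,n-3,\dots,n-(4k-5)\}$ of $2k-1$ vertices of common difference $2$, so $|\widetilde\sigma|=n-2k+1$ and $\dim=n-2k$. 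By Theorem~\ref{thm:seq-of-elt-match} the union of all the element matchings is acyclic, so $\Delta_k^t(W_n)$ is homotopy equivalent to a CW complex with one $0$-cell (matched to $\emptyset$) and one cell of the stated dimension, hence to a single sphere; and since that dimension is strictly less than $n-k-1$ for $k\ge 2$, Theorem~\ref{thm:shell-implies-homotopytype} shows the complex is not shellable. A convenient anchor is the base case $n=3k$: then $W_{3k}$ has exactly three independent $k$-sets (all cyclic gaps equal to $3$), so $\Delta_k^t(W_{3k})$ is the union of three $(2k-1)$-simplices with contractible pairwise intersections and empty triple intersection, and the Nerve Lemma gives $\Delta_k^t(W_{3k})\simeq\mathbb S^1$, matching the formula at $i=0$.

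The hard part will be the explicit design of the matching: a uniform, checkable rule for the partners of the residue-$1$ faces, a proof that the resulting collection of elementary moves is a well-defined acyclic matching (each face used at most once, no directed cycles in the modified Hasse diagram), and --- most delicately --- the isolation of the single irregular critical cell together with a computation of its size that transitions correctly at $n=4k$. The interval-versus-component dichotomy in $W_n$ means that several independence-number estimates that were genuinely routine for $C_n$ turn into case analyses here, and keeping all of these uniform in $k$ and $n$ is the principal obstacle.
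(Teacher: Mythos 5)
The first thing to say is that the statement you are proving is Conjecture~\ref{conj:total-cut-wreath}: the paper offers no proof of it, only Sage data for $k\le 7$, $n\le 21$, so there is no argument of ours to compare yours against. The second thing to say is that what you have written is not yet a proof either. Your peripheral checks are correct and useful --- the independence number $\alpha(W_n)=\lfloor n/3\rfloor$ and hence the void threshold $n\le 3k-1$; the fact that the full $(n-3k+1)$-skeleton lies in $\Delta_k^t(W_n)$; the nerve-lemma computation $\Delta_k^t(W_{3k})\simeq\bbS^1$ from the three facets with contractible pairwise intersections; and the consistency of the claimed dimensions with Proposition~\ref{prop:MarijaDMT-Delta2-wreath-graph} at $k=2$. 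But the entire core of the argument is deferred: you never define the matching rule for the residue-$1$ faces, never verify that the resulting collection of pairs is a well-defined matching (let alone acyclic --- although Theorem~\ref{thm:seq-of-elt-match} would handle acyclicity \emph{if} every pairing were genuinely an element matching in the sense of Definition~\ref{def:eltmatch}, which your ``second layer of moves'' may not be), and never prove that exactly one face survives. Your own phrases ``Granting that the matching pairs off every face except one'' and ``The hard part will be\dots'' concede that the theorem-bearing content is absent.

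The gap is not merely one of exposition. You correctly identify the two places where the cycle argument of Theorem~\ref{thm:total-cut-cycle} breaks for $W_n$: the controlling congruence is modulo $3$ rather than modulo $2$, so after the analogues of $\mathcal{E}$ and $\mathcal{O}$ are paired there remains a whole residue class of faces with no prescribed partner; and induced subgraphs of $W_n$ on non-interval vertex sets are not disjoint unions of blocks, so the ``addition of an even block'' bookkeeping that drives every step of the cycle proof has no direct analogue. Resolving either of these requires a new construction, not a routine adaptation, and the transition in the critical cell's cardinality at $n=4k$ (from $2n-6k+2$ to $n-2k+1$) is asserted rather than derived --- you would need to exhibit the irregular face explicitly in both regimes and show no other face survives, which is exactly where a uniform-in-$k$ argument is likely to be delicate. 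As it stands the proposal is a credible research plan consistent with the discrete Morse techniques used elsewhere in the paper, but the conjecture remains open after reading it.
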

For the grid graph $G(m,n)$, the homotopy type of  $\Delta^t_k(G(m,n)$ for $k=2$ was determined in 
 Subsection~\ref{subsec:Delta2}. The following data for the Betti numbers 
 for arbitrary $k$ suggests that the homotopy type is always that of spheres in a single dimension. These numbers were obtained using a sequence of Morse element matchings, in the standard (lexicographic) order of the vertices of $G(m,n)$.
\begin{table}[htbp]
\begin{center}
\scalebox{.8}{
\begin{tabular}{|c|c|c|c|c|c|c|c|}
\hline
$k\backslash n$ & 2 & 3 & 4 & 5 & 6 & 7 & 8 \\ \hline
$1$ & $\beta_2=1$ & $\beta_4=1$ & $\beta_6=1$ & $\beta_8=1$ & $\beta_{10}=1$ & $\beta_{12}=1$ & $\beta_{14}=1$ \\
$2$ & $\beta_0=1$ & $\beta_2=2$ & $\beta_4=3$ & $\beta_6=4$ & $\beta_8=5$ & $\beta_{10}=6$ & $\beta_{12}=7$ \\
$3$ & void & $\beta_0=1$ & $\beta_2=3$ & $\beta_4=6$ & $\beta_6=10$ & $\beta_8=15$ & $\beta_{10}=21$ \\
$4$ & void & void & $\beta_0=1$ & $\beta_2=4$ & $\beta_4=10$ & $\beta_6=20$ & $\beta_8=35$ \\
$5$ & void & void & void & $\beta_0=1$ & $\beta_2=5$ & $\beta_4=15$ & $\beta_6=35$ \\
$6$ & void & void & void & void & $\beta_0=1$ & $\beta_2=6$ & $\beta_4=21$ \\
$7$ & void & void & void & void & void & $\beta_0=1$ & $\beta_2=7$ \\
$8$ & void & void & void & void & void & void & $\beta_0=1$ \\
\hline
\end{tabular}
}
\end{center}
%\vskip .1in
\caption{\small Nonzero Betti numbers for $\Delta^t_k (G(2,n)), 1\le k\le 8$.  The dimension of the total cut complex is $2n-k-1$.  Note that $\Delta^t_1(G(2,n))\simeq \bbS^{2n-2}$.}
\end{table}
\begin{table}[htbp]
\begin{center}
\scalebox{.8}{
\begin{tabular}{|c|c|c|c|c|c|c|l|}
\hline
$k \backslash n$ & 3 & 4 & 5 & 6 & 7 \\ \hline
$1$ & $\beta_7=1$ & $\beta_{10}=1$ & $\beta_{13}=1$ & $\beta_{16}=1$ & $\beta_{19}=1$ \\
$2$ & $\beta_5=4$ & $\beta_8=6$ & $\beta_{11}=8$ & $\beta_{14}=10$ & $\beta_{17}=12$ \\
$3$ & $\beta_3=6$ & $\beta_6=15$ & $\beta_9=28$ & $\beta_{12}=45$ & $\beta_{15}=66$ \\
$4$ & $\beta_1=4$ & $\beta_4=20$ & $\beta_7=56$ & $\beta_{10}=120$ & $\beta_{13}=220$ \\
$5$ & $\beta_i=0, i \geq 0$ & $\beta_2=13$ & $\beta_5=67$ & $\beta_8=206$ & $\beta_{11}=490$ \\
$6$ & void & $\beta_0=1$ & $\beta_3=42$ & $\beta_6=225$ & $\beta_9=748$ \\
$7$ & void & void & $\beta_1=7$ & $\beta_4=139$ & $\beta_7=761$ \\
$8$ & void & void & $\beta_i=0, i \geq 0$ & $\beta_2=33$ & $\beta_5=468$ \\
$9$ & void & void & void & $\beta_0=1$ & $\beta_3=135$ \\
$10$ & void & void & void & void & $\beta_1=10$ \\
$11$ & void & void & void & void & $\beta_i=0, i \geq 0$ \\
\hline
\end{tabular}
}
\end{center}
\vskip .1in
\caption{\small Nonzero Betti numbers for $\Delta^t_k (G(3,n))$, $1 \le k\le 11$.  The dimension of the total cut complex is $3n-k-1$.  Note that $\Delta^t_1(G(3,n))\simeq \bbS^{3n-2}$.
}
\end{table}
%
%\vskip .1in
\begin{table}[htbp!]
\begin{center}
\scalebox{.8}{
\begin{tabular}{|c|c|c|c|c|c|c|l|}
\hline
$k\backslash n$ & 4 & 5 \\ \hline
$1$ & $\beta_{14}=1$ & $\beta_{18} = 1$\\
$2$ & $\beta_{12}=9$ & $\beta_{16} = 12$ \\
$3$ & $\beta_{10}=36$ & $\beta_{14} = 66$ \\
$4$ & $\beta_8=84$ & $\beta_{12} = 220$ \\
$5$ & $\beta_6=122$ & $\beta_{10} = 489$ \\
$6$ & $\beta_4=102$ & $\beta_8 = 737$ \\
$7$ & $\beta_2=29$ & $\beta_6 = 705$ \\
$8$ & $\beta_0=1$ & $\beta_4 = 340$ \\
$9$ & void & $\beta_2 = 53$ \\
$10$ & void & $\beta_0 = 1$ \\
\hline
\end{tabular}
}
\end{center}
%\vskip .05in
\caption{\small Nonzero Betti numbers for $\Delta^t_k (G(4,n))$, $1\le k\le 10$.  The dimension of the total cut complex is $4n-k-1$.  Note that $\Delta^t_1(G(4,n))\simeq \bbS^{4n-2}$.
}
\end{table}
%
%\clearpage

The data supports the following conjectures:
\begin{conj}\label{conj:Grid-total}  The nonzero Betti numbers satisfy the following formulas:
\[\beta_{2n-2k}(\Delta^t_k(G(2, n)))=\binom{n-1}{k-1},\ k\ge 2,\] 
\[\beta_{3j}(\Delta^t_3(G(3, 2+j)))=\binom{2j+2}{2},\ (k=3), \quad   
\beta_{1+3j}(\Delta^t_4(G(3, 3+j)))=\binom{2j+4}{3},\ (k=4).\]
Equivalently,
\[\beta_{3n-6}(\Delta^t_3(G(3, n)))=\binom{2n-2}{2},\ (k=3), \quad   
\beta_{3n-8}(\Delta^t_4(G(3, n)))=\binom{2n-2}{3},\ (k=4).\]
\end{conj}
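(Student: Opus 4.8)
The plan is to prove each identity by a discrete Morse theory argument, building an explicit acyclic matching on the face poset of $\Delta^t_k(G(m,n))$ as a concatenation of element matchings $M_1,M_2,\dots$ performed in the standard lexicographic order on the vertices, just as in the proofs of Theorem~\ref{thm:total-cut-cycle} and Theorem~\ref{thm:grid-k=2MyMorseMatching2021Sept}. Since $G(2,n)$ and $G(3,n)$ are bipartite, and in particular triangle-free, every subset of $[mn]$ of size at most $mn-(2k-1)$ is a face, so the complex already contains the full $(mn-2k)$-skeleton; this both pins the expected sphere dimension at $mn-2k$ and guarantees that, once the concatenated matching is shown to leave critical cells only in dimensions $0$ and $mn-2k$, Theorem~\ref{thm:seq-of-elt-match} yields an actual wedge of $(mn-2k)$-spheres (the top-dimensional critical cells are attached to a single $0$-cell, so their attaching maps are nullhomotopic). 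The matching should begin with the corner vertex $1$; after $M_1$ the survivors $\sigma$ are exactly those with $1\notin\sigma$, $\alpha([mn]\setminus\sigma)=k$ and $\alpha([mn]\setminus(\sigma\cup\{1\}))=k-1$, and one then runs $M_2,M_3,\dots$, analysing the survivors via the additivity of the independence number on unions of pairwise independent ``blocks'' and under ``the addition of an even block,'' exactly the tools developed for Theorem~\ref{thm:total-cut-cycle}.

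For $\Delta^t_k(G(2,n))$ I expect the matching to leave one $0$-cell and exactly $\binom{n-1}{k-1}$ critical cells of dimension $2n-2k$, namely the complements of the sets $\{1\}\cup\bigcup_{j\in J}\{j,\,n+j\}$ with $J$ ranging over the $(k-1)$-element subsets of $\{2,\dots,n\}$ --- the top-left corner together with one vertical domino in each of $k-1$ of the last $n-1$ columns. Such a set $S$ has $2k-1$ vertices, its $k-1$ dominoes form a matching of size $k-1$, and $S$ is bipartite, so by K\"onig's theorem $\alpha(S)=k$ and $\alpha(S\setminus\{1\})=k-1$; hence the complement of $S$ is a face that survives $M_1$ but whose complement has independence number $k-1$ once $1$ is deleted. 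The remaining work is to verify that every other survivor $\sigma$ is matched, in the first available $M_a$, with $\sigma\cup\{a\}$ where $a$ is the least vertex of $[2n]\setminus(\sigma\cup\{1\})$, and that this pair is left alone by $M_2,\dots,M_{a-1}$ --- exactly the case analysis that makes the cycle proof long, now carried out on the $2\times n$ ladder. (For $k=2$ this recovers Theorem~\ref{thm:grid-k=2MyMorseMatching2021Sept} with $m=2$.)

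For $\Delta^t_3(G(3,n))$ and $\Delta^t_4(G(3,n))$ the same scheme should apply, again begun at vertex $1$, producing one $0$-cell together with $\binom{2n-2}{k-1}$ critical cells of dimension $3n-2k$, each the complement of $\{1\}$ and $k-1$ ``dominoes'' drawn from a suitable $(2n-2)$-element family of configurations outside the top row (for $k=2$ this is the $(m-1)(n-1)=2n-2$ critical cells already identified in Theorem~\ref{thm:grid-k=2MyMorseMatching2021Sept}). The K\"onig-theorem argument again shows each such complement is a face with the correct two independence numbers; the harder point is that in a width-$3$ grid the missing-vertex sets are no longer disjoint blocks along a cycle but can contain ``$L$-shaped'' and ``$2\times2$-square'' pieces, so the block bookkeeping driving $M_2,M_3,\dots$ must be reorganised around columns, and one must separately rule out critical cells in intermediate dimensions. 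The restriction $k\le4$ is genuine: for $k\ge5$ one cannot keep $k-1$ such pieces pairwise independent inside three rows, distinct choices collide or cease to be critical, and the Betti numbers drop strictly below $\binom{(m-1)(n-1)}{k-1}$, as the $G(3,5)$, $k=5$ entry $\beta_5=67<70=\binom{8}{4}$ already shows.

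The main obstacle throughout is the combinatorial verification that the proposed concatenation of element matchings is acyclic with exactly the claimed critical cells: for each candidate survivor one must track how removing or inserting an intermediate vertex changes the independence number of the complement and show no ``upward'' competing match ever appears. For the $G(2,n)$ identity this should be a manageable (if lengthy) extension of the argument behind Theorem~\ref{thm:total-cut-cycle}; for the two $G(3,n)$ identities the genuinely new difficulty is handling the two-dimensional geometry of the width-$3$ strip, and in particular understanding precisely why $k\le4$ is the threshold beyond which the clean matching --- and hence the binomial formula --- breaks down.
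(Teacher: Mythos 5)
This statement is a conjecture in the paper: the authors offer no proof, only Sage computations, so there is nothing to compare your argument against except the proved special cases. Your proposal is a sensible plan of attack (it mirrors the Morse-matching proofs of Theorem~\ref{thm:grid-k=2MyMorseMatching2021Sept} and Theorem~\ref{thm:total-cut-cycle}, and your dimension and counting bookkeeping is consistent with the conjectured formulas, as is your observation that $\beta_5(\Delta^t_5(G(3,5)))=67<\binom{8}{4}$ explains the restriction to $k\le 4$), but it is not a proof: by your own admission the decisive step --- verifying that the concatenated element matchings leave exactly the claimed critical cells and nothing in intermediate dimensions --- is left entirely open, and that step is the whole content of such an argument.

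Moreover, the one concrete claim in your proposal that can be tested against the paper is already false at $k=2$. You propose that the critical cells of $\Delta^t_k(G(2,n))$ under the lexicographic element matching are the complements of $\{1\}\cup\bigcup_{j\in J}\{j,n+j\}$ (vertical dominoes). For $k=2$ this is the set $\sigma=[2n]\setminus\{1,j,n+j\}$, and one checks directly that $\sigma$ survives $M_1,\dots,M_{j-1}$ but is then matched upward at step $M_j$ with $\sigma\cup\{j\}=[2n]\setminus\{1,n+j\}$, which is a facet that is still unmatched at that stage; this is exactly the pairing $A_{a,a+n}\to B_{a+n}$ in the proof of Theorem~\ref{thm:grid-k=2MyMorseMatching2021Sept}. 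The actual critical cells there are the complements of $\{1\}$ together with the \emph{horizontal} edges of the bottom row, a different family (of the same size $n-1$). So your candidate family does not ``recover'' the $k=2$ theorem; either the matching order or the proposed critical cells must be changed, and until a corrected family is exhibited and the acyclic matching is verified to leave precisely those cells, the conjecture remains unproved. (A minor point: the fact that every set of size at most $mn-(2k-1)$ is a face follows from bipartiteness --- one side of the bipartition of any $2k-1$ vertices has size at least $k$ --- not from triangle-freeness alone, as $C_5$ with $k=3$ shows.)
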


\section{Acknowledgments}

We thank the organizers of the 2021 Graduate Research Workshop in Combinatorics, where this work originated. We also thank Natalie Behague, Dane Miyata, and George Nasr for their early contributions to our project. Marija Jeli\'c Milutinovi\'c has been supported by the Project No.\ 7744592 MEGIC ``Integrability and Extremal Problems in Mechanics, Geometry and Combinatorics'' of the Science Fund of Serbia, and by the Faculty of Mathematics University of Belgrade through the grant (No.\ 451-03-68/2022-14/200104) by the Ministry of Education, Science, and Technological Development of the Republic of Serbia. Rowan Rowlands was partially supported by a graduate fellowship from NSF grant DMS-1953815.

We are also grateful to the anonymous referees for their careful reading of the paper.

\section{Appendix: Tools from discrete Morse theory}\label{sec:DMT-Marija}

Discrete Morse theory was introduced by Forman \cite{Forman1998}, and it is a powerful tool for determining the homotopy type of a polyhedral complex. Good references  are the books of Kozlov \cite{Koz2008} and Jonsson \cite{JonssonBook2008}.  
\begin{df}[{\cite[Definition 11.1]{Koz2008}, \cite[Chapter 4, Section 2]{JonssonBook2008}}] A partial matching in a poset $P$ is a partial matching on the underlying
graph of the Hasse diagram of $P ,$ i.e., it is a subset $\mathcal{M} \subseteq P \times P$ such that
\begin{itemize}
\item $(a, b) \in \mathcal{M}$ implies $b\succ a$ (i.e., $b$ covers $a$ in $P$); 
\item each $x \in P$ belongs to at most one ordered pair in $\mathcal{M}.$ 
\end{itemize}
When $(a, b) \in \mathcal{M},$ we write $a = d(b)$ and $b = u(a).$
 A partial matching on $P$ is called \emph{acyclic} if there does not exist a cycle
 $ u(a_1)\succ a_1 \prec u(a_2) \succ a_2 \prec \dots \prec u(a_t)\succ a_t\prec u(a_1), t\ge 2.  $
with all $a_i \in P$ being distinct.  Given an acyclic partial matching $\mathcal{M}$  on $P,$ those elements of $P$ which do not belong to the matching are called \emph{unmatched} or \emph{critical} with respect to the matching $\mathcal{M}$.

\end{df}

The main result of discrete Morse theory for simplicial complexes is the following theorem.  The formulation below appears for polyhedral complexes in \cite{JJMV2019}.
\begin{theorem}[{\cite[Theorem 11.13]{Koz2008}, \cite[Theorem 4.8]{JonssonBook2008}}] \label{thm:DMT}
Let $\mathcal{K}$ be a simplicial complex and $\mathcal{M} $ be an acyclic matching on the face poset of $\mathcal{K}$. Let $c_i$ be the number of critical $i$-dimensional cells of $\mathcal{K}$ with respect to the matching $\mathcal{M}.$  Then $\mathcal{K}$ is homotopy equivalent to a cell complex $\mathcal{K}_c$ with $c_i$ cells of dimension $i\ge 0,$ plus a single $0$-dimensional cell, in the case when the empty set is also paired in the matching. 

In particular, if an acyclic matching has critical cells in only one fixed dimension $i\ge 0,$ then $\mathcal{K}$ is homotopy equivalent to a wedge of $i$-dimensional spheres.
\end{theorem}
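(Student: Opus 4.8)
The plan is to prove Theorem~\ref{thm:DMT} by induction on the number of faces of $\mathcal{K}$, using two reversible building blocks — an elementary collapse of a matched \emph{free pair}, and the attachment of a single cell coming from a critical facet — and invoking the acyclicity hypothesis precisely to guarantee that one of these two moves is always available.

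First I would record the two classical facts on which everything rests. (i) If $(\sigma,\tau)$ is a \emph{free pair} of a simplicial complex $\mathcal{L}$, meaning $\sigma\subsetneq\tau$, $\dim\tau=\dim\sigma+1$, and $\tau$ is the only face of $\mathcal{L}$ properly containing $\sigma$, then $|\mathcal{L}|$ deformation retracts onto $|\mathcal{L}\setminus\{\sigma,\tau\}|$; in particular such an elementary collapse is a homotopy equivalence (classical; see \cite{Koz2008}). (ii) Attaching a cell is homotopy invariant: if $X\simeq X'$ and $g\colon\bbS^{d-1}\to X$ corresponds to $g'\colon\bbS^{d-1}\to X'$ under the equivalence, then the adjunction spaces obtained by attaching a $d$-cell satisfy $X\cup_g e^d\simeq X'\cup_{g'}e^d$, and if $g$ is null-homotopic then $X\cup_g e^d\simeq X\vee\bbS^d$ (see \cite{Hatcher2002}). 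I would also use the elementary observation that a face $\sigma$ of $\mathcal{L}$ is free exactly when it has a unique coface in the face poset, since any larger face containing $\sigma$ must contain a coface of $\sigma$.

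The combinatorial core is the following. Encode $\mathcal{M}$ by a directed graph $\Gamma$ on the face poset $P$ of $\mathcal{K}$: for each covering relation $\sigma\lessdot\tau$ draw $\sigma\to\tau$ if $(\sigma,\tau)\in\mathcal{M}$ and $\tau\to\sigma$ otherwise. By definition $\mathcal{M}$ is acyclic exactly when $\Gamma$ has no directed cycle, so $\Gamma$ has a source. A short inspection of the edges at a prospective source shows that the sources of $\Gamma$ are precisely (a) the critical facets of $\mathcal{K}$ and (b) the free faces $\sigma$ whose unique coface $\tau$ satisfies $(\sigma,\tau)\in\mathcal{M}$; moreover, if $\mathcal{K}$ has at least one vertex, the empty face is never a source. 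Now the induction. If $\mathcal{K}$ is void, equals $\{\emptyset\}$, or is a single point, the statement is immediate (the point case is where the ``plus a single $0$-cell'' clause is used, since there the empty face is matched). Otherwise choose a source. If it is of type (b), then necessarily $\sigma\ne\emptyset$ (a type-(b) source with $\sigma=\emptyset$ would force $\mathcal{K}$ to be a point), and we collapse the free pair $(\sigma,\tau)$: the restriction $\mathcal{M}'$ of $\mathcal{M}$ to $\mathcal{K}'=\mathcal{K}\setminus\{\sigma,\tau\}$ is again acyclic (a cycle in its graph would be a cycle in $\Gamma$), it has the same critical cells, it has fewer faces, and $|\mathcal{K}|\simeq|\mathcal{K}'|$ by fact (i); apply the inductive hypothesis. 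If no source is of type (b), pick a type-(a) source, that is, a critical facet $\tau$ of some dimension $d\ge 0$, and put $\mathcal{K}'=\mathcal{K}\setminus\{\tau\}$; then $\mathcal{M}$ is still an acyclic matching on $\mathcal{K}'$, its critical cells are those of $\mathcal{K}$ with $\tau$ removed (so $c_d$ decreases by one), $\mathcal{K}'$ has fewer faces, and $\partial\tau\subseteq\mathcal{K}'$, so $|\mathcal{K}|$ is $|\mathcal{K}'|$ with a $d$-cell attached along $\partial\tau\cong\bbS^{d-1}$. By induction $|\mathcal{K}'|\simeq X'$ for a CW complex $X'$ with the asserted cell counts, and by fact (ii) $|\mathcal{K}|\simeq X'$ with one more $d$-cell attached, a CW complex with exactly the asserted cell counts. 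This proves the main statement, and tracking at each step whether the empty face is matched gives the ``plus a single $0$-cell'' refinement. The ``in particular'' clause then follows: when the empty face is matched, $\mathcal{K}_c$ has a single $0$-cell, so if all critical cells lie in one dimension $i$ then every higher cell is attached along a null-homotopic map and $\mathcal{K}\simeq\bigvee_{c_i}\bbS^i$.

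I expect the main obstacle to be the source-identification step combined with the empty-face bookkeeping: one must be careful that ``free pair'' means unique coface rather than merely a covering relation, that restricting an acyclic matching to a subcomplex preserves acyclicity, and — most subtly — that a matched pair $(\emptyset,v)$ must \emph{not} be collapsed (doing so would wrongly identify a point with the empty space); instead such a pair persists to the bottom of the induction and contributes exactly the extra $0$-cell. An alternative, closer to Forman's original argument \cite{Forman1998}, is to refine $\mathcal{M}$ to a discrete Morse function and analyze how the sublevel subcomplexes change as the parameter increases — a collapse across a noncritical value, a cell attachment across a critical value — invoking the same two facts; that trades the source analysis for the construction of a compatible Morse function from the acyclic matching.
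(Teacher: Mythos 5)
The paper does not actually prove Theorem~\ref{thm:DMT}: it is quoted verbatim from the literature (\cite{Koz2008}, \cite{JonssonBook2008}) as a black-box tool in the Appendix, so there is no in-paper argument to compare yours against. Judged on its own terms, your sketch is a correct rendition of the standard proof (essentially the one in Kozlov/Forman): the directed Hasse graph, the existence of a source by acyclicity, and the dichotomy between collapsing a matched free pair and attaching a cell for a critical facet. Your source characterization is right, and for the reason you hint at: a non-facet source must have \emph{all} of its covers matched to it, hence (by the matching condition) exactly one cover $\tau$, and having a unique cover forces the link to be a single vertex, so $\sigma$ really is a free face rather than merely having a covering relation. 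The remaining points you would need to make explicit in a full write-up are routine: that homotopic (or homotopy-equivalent-target) attaching maps yield homotopy-equivalent adjunction spaces because $\bbS^{d-1}\hookrightarrow D^d$ is a cofibration, and cellular approximation so that the limit object is genuinely a CW complex with the stated cell counts. Your observation that a matched pair $(\emptyset, v)$ must survive to the base case rather than be ``collapsed'' is exactly the bookkeeping that produces the extra $0$-cell in the statement, and it also explains why the wedge-of-spheres conclusion implicitly assumes the empty face is matched.
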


\begin{cor}[{\cite{DeshpandeSingh2021}, \cite{JonssonBook2008}}]
If the critical cells of an acyclic matching on $\mathcal{K}$ form a subcomplex $\mathcal{K'}$ of $\mathcal{K}$, then $\mathcal{K}$ collapses simplicially to $\mathcal{K'}$, and hence $\mathcal{K}$ is homotopy equivalent to $\mathcal{K'}.$
\end{cor}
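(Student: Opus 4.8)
\noindent\emph{Proof strategy.} The plan is to show that the matched pairs of $\mathcal{M}$ can be stripped off $\mathcal{K}$ one at a time, each removal an elementary simplicial collapse, so that what remains at the end is exactly the subcomplex $\mathcal{K}'$ of critical faces; since a simplicial collapse is a deformation retraction, this yields both conclusions at once. Write $Q\coloneqq\mathcal{K}\setminus\mathcal{K}'$ for the set of non-critical faces, and note that the empty face, lying in the subcomplex $\mathcal{K}'$, is critical, so $\mathcal{M}$ restricts to a perfect matching on $Q$, pairing each $\sigma\in Q$ with a face $u(\sigma)$ covering it. The key structural observation is that $Q$ is \emph{upward closed} in the face poset of $\mathcal{K}$: because $\mathcal{K}'$ is a subcomplex, $\sigma\in Q$ and $\sigma\subsetneq\rho$ force $\rho\in Q$. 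In particular, for a matched pair $\{\sigma,\tau\}$ with $\sigma\lessdot\tau$, every coface of $\sigma$ in $\mathcal{K}$ lies in $Q$ and hence is itself matched. This is what makes the sequence of collapses available, and the whole argument runs parallel to the proof of Theorem~\ref{thm:DMT}.

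Next I would produce a legal order of removals. Define a relation $\prec_{0}$ on the matched pairs: for distinct pairs $P=\{\sigma,\tau\}$ and $P'=\{\sigma',\tau'\}$ with $\sigma\lessdot\tau$ and $\sigma'\lessdot\tau'$, set $P\prec_{0}P'$ iff $\sigma'\subsetneq\tau$ (the larger face of $P$ is a proper coface of the smaller face of $P'$). I would check that $\prec_{0}$ is acyclic: a cycle $P_{1}\prec_{0}\cdots\prec_{0}P_{t}\prec_{0}P_{1}$, after a dimension count using that each $\tau_{i}$ covers $\sigma_{i}$, unwinds into a cycle $\sigma_{1}\prec u(\sigma_{1})\succ\sigma_{2}\prec u(\sigma_{2})\succ\cdots\succ\sigma_{t}\prec u(\sigma_{t})\succ\sigma_{1}$ with the $\sigma_{i}$ distinct, which is forbidden by acyclicity of $\mathcal{M}$. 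So $\prec_{0}$ extends to a linear order; list the pairs $P_{1},\dots,P_{N}$ in that order, put $\mathcal{K}_{1}=\mathcal{K}$ and $\mathcal{K}_{j+1}=\mathcal{K}_{j}\setminus\{\sigma_{j},\tau_{j}\}$. I would then verify inductively that $\sigma_{j}$ is a free face of $\mathcal{K}_{j}$ with $\tau_{j}$ its unique coface (so $\tau_{j}$ is maximal in $\mathcal{K}_{j}$): any coface of $\sigma_{j}$ in $\mathcal{K}$ other than $\tau_{j}$ lies in $Q$, hence belongs to a matched pair $P$ with $P\prec_{0}P_{j}$, hence was already removed. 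Then $\mathcal{K}_{j}\searrow\mathcal{K}_{j+1}$, and after $N$ steps $\mathcal{K}_{N+1}=\mathcal{K}\setminus Q=\mathcal{K}'$, giving $\mathcal{K}\searrow\mathcal{K}'$ and $\mathcal{K}\simeq\mathcal{K}'$.

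The main obstacle is precisely this middle step: verifying that $\prec_{0}$ is genuinely acyclic, and that with respect to any linear extension each $\sigma_{j}$ has truly lost all of its cofaces except $\tau_{j}$ by the time pair $P_{j}$ is processed. That bookkeeping is exactly where the acyclicity of $\mathcal{M}$ is consumed, and it is the same technical heart as in Forman's original theorem; rather than reproduce it I would cite \cite{DeshpandeSingh2021} and \cite{JonssonBook2008}, where it is carried out in full.
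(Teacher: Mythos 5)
The paper gives no proof of this corollary at all — it is stated purely as a citation to \cite{DeshpandeSingh2021} and \cite{JonssonBook2008} — so there is no internal argument to compare against; your sketch is the standard collapsing proof from those sources and it is correct. The three load-bearing points are all handled properly: the non-critical faces form an upward-closed set on which $\mathcal{M}$ restricts to a perfect matching, the order relation $\prec_0$ on matched pairs is acyclic because a cycle forces all the $\sigma_i$ into a single dimension and hence yields a forbidden alternating cycle of the matching, and each $\sigma_j$ really is a free face with unique coface $\tau_j$ at the moment its pair is processed (noting that a proper coface $\rho\ne\tau_j$ of $\sigma_j$ satisfies $\sigma_j\subsetneq u(\rho)$ whether $\rho$ is the top or the bottom of its own pair).
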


The arguments in this paper use an important special type of matching called an \emph{element matching} \cite{JonssonBook2008}.  
They are defined more generally for arbitrary collections of subsets of a set $X$ (and not just simplicial complexes) in \cite{JonssonBook2008}, and are a special case of the \emph{Pairing Lemma} in \cite[Lemma 3.4]{LinSharesh2003}.
\begin{df}[{\cite{DeshpandeSingh2021}, \cite{JonssonBook2008}}] \label{def:eltmatch}
Let $\mathcal{K}$ be a simplicial complex and $x$ a vertex.  The element matching on $\mathcal{K}$ using $x$ is  the matching 
\[ \{(\sigma, \sigma\sqcup\{x\}) \mid \sigma\sqcup\{x\}\in \mathcal{K}, x\notin \sigma\}.\]
\end{df}

The following result tells us that a sequence of element matchings is always acyclic. Again we take the explicit  formulation in \cite{DeshpandeSingh2021}.

\begin{theorem}[{\cite[Lemma 4.1]{JonssonBook2008}}] \label{thm:seq-of-elt-match}
Let $\mathcal{K}$ be a simplicial complex and $\{x_1, x_2,\dots, x_n\}$ be a subset of the vertex set of  $\mathcal{K}$. Let $\Delta_0= \mathcal{K}$ and for $i\in [n]$ define 
\begin{align*}  M(x_i)&\coloneqq\{(\sigma, \sigma\cup\{x_i\}) \mid x_i\not\in \sigma, \textrm{ and } \sigma, \sigma\cup\{x_i\}\in \Delta_{i-1}\},\\
N(x_i)&\coloneqq \{\sigma\in \Delta_{i-1} \mid \sigma\in\eta \textrm{ for some } \eta\in M(x_i)\}, \textrm{ and }\\
\Delta_i&\coloneqq\Delta_{i-1}\setminus N(x_i).
\end{align*}

Then $\bigsqcup_{i=1}^n M(x_i)$ is an acyclic matching on  $\mathcal{K}$.
\end{theorem}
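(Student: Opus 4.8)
The plan is to establish two things: that $\bigsqcup_{i=1}^{n} M(x_i)$ is a partial matching, and that it is acyclic. The first is immediate from the construction: $N(x_i)$ consists of exactly those faces of $\Delta_{i-1}$ that occur in some pair of $M(x_i)$, so deleting $N(x_i)$ to form $\Delta_i$ guarantees that no face is matched at two different stages, and each pair $(\sigma,\sigma\cup\{x_i\})$ is by definition a covering relation in the face poset. The work is in acyclicity, and I would attack it by supposing there is an alternating cycle
\[ b_1\succ a_1\prec b_2\succ a_2\prec\cdots\prec b_t\succ a_t\prec b_1,\qquad t\ge 2,\ a_1,\dots,a_t\ \text{distinct},\]
writing $i_j$ for the unique stage with $(a_j,b_j)\in M(x_{i_j})$, so that $b_j=a_j\cup\{x_{i_j}\}$ with $x_{i_j}\notin a_j$, and then deriving a contradiction from the behaviour of the stage indices $i_1,\dots,i_t$ around the cycle. (An equivalent organisation is induction on $n$: peel off $M(x_1)$, apply the inductive hypothesis to the sequence $x_2,\dots,x_n$ on the family $\Delta_1$, and then glue via a ``composition lemma'' of the same flavour as what follows.)

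Two ingredients are routine. First, a single element matching using a vertex $x$ is acyclic: in a descent $b_j=a_j\cup\{x\}\succ a_{j+1}$ with $a_{j+1}\ne a_j$, the lower element $a_{j+1}$ of an $M(x)$-pair cannot contain $x$, so $a_{j+1}\subsetneq a_j$; but then its partner $a_{j+1}\cup\{x\}$ satisfies $a_{j+1}\subsetneq a_{j+1}\cup\{x\}\subsetneq a_j\cup\{x\}=b_j$, contradicting that $a_{j+1}\prec b_j$ is a covering relation. This argument uses nothing about $\mathcal{K}$ beyond its being a family of subsets, which is what allows it to be reused on each $\Delta_i$, and it also shows that the hypothetical cycle above cannot lie entirely within a single $M(x_i)$. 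Second, consecutive stages along the cycle differ: from the descent $b_j\succ a_{j+1}$ one gets $x_{i_j}\in a_{j+1}$ (the deleted vertex is not $x_{i_j}$, since $a_{j+1}\ne a_j$), whereas $(a_{j+1},b_{j+1})\in M(x_{i_{j+1}})$ forces $x_{i_{j+1}}\notin a_{j+1}$; hence $i_j\ne i_{j+1}$.

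The crux is to upgrade ``$i_j\ne i_{j+1}$'' to a monotonicity statement that forces a contradiction around the cycle — for instance, that every descent strictly decreases the stage, so that $i_1>i_2>\cdots>i_t>i_1$ is impossible. Concretely: suppose a descent sent $a_j$ (matched at stage $i_j$, so $b_j=a_j\cup\{x_{i_j}\}$) to $a_{j+1}=b_j\setminus\{w_j\}$ with $w_j\ne x_{i_j}$, and suppose $a_{j+1}$ is matched at a \emph{later} stage. Then $a_{j+1}$ survives stage $i_j$, i.e.\ $a_{j+1}\in\Delta_{i_j}$, even though $b_j\notin\Delta_{i_j}$ (it is removed at stage $i_j$); since $x_{i_j}\in a_{j+1}$, the definition of the element matching $M(x_{i_j})$ forces $a_{j+1}\setminus\{x_{i_j}\}\notin\Delta_{i_j-1}$; but $a_{j+1}\setminus\{x_{i_j}\}=a_j\setminus\{w_j\}$ is a subset of $a_j\in\Delta_{i_j-1}$. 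The main obstacle lies exactly here: $\Delta_{i_j-1}$ is \emph{not} a simplicial complex — deleting matched faces at earlier stages destroys downward-closure — so one cannot simply invoke ``subsets of a face are faces'' to conclude $a_j\setminus\{w_j\}\in\Delta_{i_j-1}$. Making this step rigorous requires tracking precisely which faces were removed at stages $1,\dots,i_j-1$ and why; this is the bookkeeping carried out in Jonsson's treatment \cite[Lemma~4.1]{JonssonBook2008}, and it is the only genuinely delicate point of the argument.
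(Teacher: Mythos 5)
The paper itself gives no proof of this statement --- it is quoted directly from Jonsson --- so the only question is whether your sketch would close into a proof, and it would not: the monotonicity lemma you propose as the crux is false. Take $\mathcal{K}=\langle\{1,3,4\},\{2,4,5\},\{2,3,4\}\rangle$ with $x_1=1$, $x_2=2$, $x_3=3$. Then $M(x_1)=\{(\emptyset,1),(3,13),(4,14),(34,134)\}$, so $\Delta_1=\{2,5,24,25,45,23,245,234\}$, whence $M(x_2)=\{(5,25),(45,245)\}$ and $M(x_3)=\{(2,23),(24,234)\}$. The alternating path $45\nearrow 245\searrow 24\nearrow 234$ is a legitimate descent from a stage-$2$ pair to a stage-$3$ pair: the stage \emph{increases}. (The path $24\nearrow 234\searrow 34\nearrow 134$ goes from stage $3$ to stage $1$, so the opposite monotonicity fails as well.) The obstruction you flag --- that $\Delta_{i_j-1}$ is not downward closed, so $a_j\setminus\{w_j\}$ need not lie in it --- is therefore not bookkeeping to be imported from the reference; it is exactly where your strategy breaks: here $a_{j+1}\setminus\{x_{i_j}\}=\{4\}$ was already consumed at stage $1$. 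Any version of stage-monotonicity valid only for cycles would have to use global information about the cycle, i.e., would already amount to acyclicity.

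The argument that works tracks a different invariant. Prove a composition lemma: if $M(x)$ is the element matching on an arbitrary family $\Delta$ of sets and $M'$ is any matching on the unmatched part $\Delta\setminus N(x)$, then no alternating cycle for $M(x)\cup M'$ contains a pair of $M(x)$. Indeed, if $(a_j,b_j)\in M(x)$ then, exactly as you showed, $x\in a_{j+1}$; and once $x$ lies in some $a_{j'}$ it can never be deleted by a later descent, because a descent from $b_{j'}$ to $b_{j'}\setminus\{x\}$ with both sets in the cycle (hence in $\Delta$) would exhibit $(b_{j'}\setminus\{x\},\,b_{j'})$ as a pair of $M(x)$, forcing $b_{j'}\in N(x)$ and contradicting $b_{j'}\in\Delta\setminus N(x)$. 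Going around the cycle then gives $x\in a_j$, contradicting $(a_j,b_j)\in M(x)$. Now let $i^*$ be minimal among the stages occurring in a putative cycle: every element of the cycle lies in $\Delta_{i^*-1}$, the stage-$i^*$ pairs belong to the element matching $M(x_{i^*})$ on $\Delta_{i^*-1}$, and all remaining pairs form a matching on $\Delta_{i^*}$; the lemma says no pair has stage $i^*$, a contradiction. No downward-closure of any $\Delta_{i-1}$ is needed anywhere. Your treatment of the partial-matching property, of a single element matching, and of $i_j\ne i_{j+1}$ is correct; it is only the passage from ``adjacent stages differ'' to a contradiction that must be routed through the $x$-containment invariant rather than through monotonicity of the stage indices.
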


\bibliographystyle{plainnat}
%\bibliography{shellable-no-url}
\bibliography{TotalCutComplexes-ArXiv-DCG-FINAL-NoComments- 2024-01-14}

\begin{thebibliography}{24}
\providecommand{\natexlab}[1]{#1}
\providecommand{\url}[1]{\texttt{#1}}
\expandafter\ifx\csname urlstyle\endcsname\relax
  \providecommand{\doi}[1]{doi: #1}\else
  \providecommand{\doi}{doi: \begingroup \urlstyle{rm}\Url}\fi

\bibitem[Bayer et~al.(2022)Bayer, Denker, Jeli\'c~Milutinovi\'c, Rowlands,
  Sundaram, and Xue]{BDJRSX}
Margaret Bayer, Mark Denker, Marija Jeli\'c~Milutinovi\'c, Rowan Rowlands,
  Sheila Sundaram, and Lei Xue.
\newblock Topology of cut complexes of graphs.
\newblock \emph{arXiv.2304.13675}, 2022.

\bibitem[Bj\"{o}rner(1995)]{BjTopMeth1995}
A.~Bj\"{o}rner.
\newblock Topological methods.
\newblock In \emph{Handbook of {C}ombinatorics, {V}ol. 1, 2}, pages 1819--1872.
  Elsevier Sci. B. V., Amsterdam, 1995.

\bibitem[Bj\"{o}rner(1984)]{Bj-AIM1984}
Anders Bj\"{o}rner.
\newblock Some combinatorial and algebraic properties of {C}oxeter complexes
  and {T}its buildings.
\newblock \emph{Adv. in Math.}, 52\penalty0 (3):\penalty0 173--212, 1984.
\newblock ISSN 0001-8708.
\newblock \doi{10.1016/0001-8708(84)90021-5}.

\bibitem[Bj\"{o}rner and Wachs(1983)]{BjWachsTAMS1983}
Anders Bj\"{o}rner and Michelle Wachs.
\newblock On lexicographically shellable posets.
\newblock \emph{Trans. Amer. Math. Soc.}, 277\penalty0 (1):\penalty0 323--341,
  1983.
\newblock ISSN 0002-9947.
\newblock \doi{10.2307/1999359}.

\bibitem[Bj\"{o}rner and Wachs(1997)]{BjWachsII1997}
Anders Bj\"{o}rner and Michelle~L. Wachs.
\newblock Shellable nonpure complexes and posets. {II}.
\newblock \emph{Trans. Amer. Math. Soc.}, 349\penalty0 (10):\penalty0
  3945--3975, 1997.
\newblock ISSN 0002-9947.
\newblock \doi{10.1090/S0002-9947-97-01838-2}.

\bibitem[Bruns and Herzog(1997)]{BrunsHerzog1997}
Winfried Bruns and J\"{u}rgen Herzog.
\newblock Semigroup rings and simplicial complexes.
\newblock \emph{J. Pure Appl. Algebra}, 122\penalty0 (3):\penalty0 185--208,
  1997.
\newblock ISSN 0022-4049.
\newblock \doi{10.1016/S0022-4049(97)00051-0}.

\bibitem[Deshpande and Singh(2021)]{DeshpandeSingh2021}
Priyavrat Deshpande and Anurag Singh.
\newblock Higher independence complexes of graphs and their homotopy type.
\newblock \emph{J. Ramanujan Math. Society}, 36\penalty0 (1), 2021.
\newblock \doi{10.1016/S0195-6698(82)80014-0}.

\bibitem[Eagon and Reiner(1998)]{EagonReiner1998}
John~A. Eagon and Victor Reiner.
\newblock Resolutions of {S}tanley-{R}eisner rings and {A}lexander duality.
\newblock \emph{J. Pure Appl. Algebra}, 130\penalty0 (3):\penalty0 265--275,
  1998.
\newblock ISSN 0022-4049.
\newblock \doi{10.1016/S0022-4049(97)00097-2}.

\bibitem[Forman(1998)]{Forman1998}
Robin Forman.
\newblock Morse theory for cell complexes.
\newblock \emph{Adv. Math.}, 134\penalty0 (1):\penalty0 90--145, 1998.
\newblock ISSN 0001-8708.
\newblock \doi{10.1006/aima.1997.1650}.

\bibitem[Fr\"{o}berg(1990)]{Froberg1990}
Ralf Fr\"{o}berg.
\newblock On {S}tanley-{R}eisner rings.
\newblock In \emph{Topics in algebra, {P}art 2 ({W}arsaw, 1988)}, volume~26 of
  \emph{Banach Center Publ.}, pages 57--70. PWN, Warsaw, 1990.

\bibitem[Golumbic(1980)]{Golumbic1980}
Martin~Charles Golumbic.
\newblock \emph{Algorithmic graph theory and perfect graphs}.
\newblock Academic Press, 1980.
\newblock ISBN 0-12-289260-7.

\bibitem[Hatcher(2002)]{Hatcher2002}
Allen Hatcher.
\newblock \emph{Algebraic topology}.
\newblock Cambridge University Press, Cambridge, 2002.
\newblock ISBN 0-521-79160-X; 0-521-79540-0.

\bibitem[Jeli\'c~Milutinovi\'c et~al.(2019)Jeli\'c~Milutinovi\'c, Jenne,
  McDonough, and Vega]{JJMV2019}
Marija Jeli\'c~Milutinovi\'c, Helen Jenne, Alex McDonough, and Julianne Vega.
\newblock Matching complexes of trees and applications of the matching tree
  algorithm.
\newblock \emph{arxiv.org/abs/1905.10560v4}, 2019.

\bibitem[Jonsson(2008)]{JonssonBook2008}
Jakob Jonsson.
\newblock \emph{Simplicial complexes of graphs}, volume 1928 of \emph{Lecture
  Notes in Mathematics}.
\newblock Springer-Verlag, Berlin, 2008.
\newblock ISBN 978-3-540-75858-7.
\newblock \doi{10.1007/978-3-540-75859-4}.

\bibitem[Kim and Lew(2020)]{kim-lew}
Minki Kim and Alan Lew.
\newblock Complexes of graphs with bounded independence number.
\newblock \emph{S\'{e}m. Lothar. Combin.}, 84B:\penalty0 Art. 39, 12, 2020.
\newblock \doi{10.1007/s11856-022-2308-4}.

\bibitem[Kozlov(2008)]{Koz2008}
Dmitry Kozlov.
\newblock \emph{Combinatorial algebraic topology}, volume~21 of
  \emph{Algorithms and Computation in Mathematics}.
\newblock Springer, Berlin, 2008.
\newblock ISBN 978-3-540-71961-8.
\newblock \doi{10.1007/978-3-540-71962-5}.

\bibitem[Lee(2011)]{LeeITM2011}
John~M. Lee.
\newblock \emph{Introduction to topological manifolds}, volume 202 of
  \emph{Graduate Texts in Mathematics}.
\newblock Springer, New York, second edition, 2011.
\newblock ISBN 978-1-4419-7939-1.
\newblock \doi{10.1007/978-1-4419-7940-7}.

\bibitem[Linusson and Shareshian(2003)]{LinSharesh2003}
Svante Linusson and John Shareshian.
\newblock Complexes of {$t$}-colorable graphs.
\newblock \emph{SIAM J. Discrete Math.}, 16\penalty0 (3):\penalty0 371--389,
  2003.
\newblock ISSN 0895-4801.
\newblock \doi{10.1137/S0895480100366968}.

\bibitem[Matou\v{s}ek(2003)]{Matousek2003}
Ji\v{r}\'{\i} Matou\v{s}ek.
\newblock \emph{Using the {B}orsuk-{U}lam theorem}.
\newblock Universitext. Springer-Verlag, Berlin, 2003.
\newblock ISBN 3-540-00362-2.
\newblock Lectures on topological methods in combinatorics and geometry,
  Written in cooperation with Anders Bj\"{o}rner and G\"{u}nter M. Ziegler.

\bibitem[Provan and Billera(1980)]{Provan-Billera}
J.~Scott Provan and Louis~J. Billera.
\newblock Decompositions of simplicial complexes related to diameters of convex
  polyhedra.
\newblock \emph{Math. Oper. Res.}, 5\penalty0 (4):\penalty0 576--594, 1980.
\newblock ISSN 0364-765X.
\newblock \doi{10.1287/moor.5.4.576}.

\bibitem[Stanley(1996)]{RPSCCA1996}
Richard~P. Stanley.
\newblock \emph{Combinatorics and commutative algebra}, volume~41 of
  \emph{Progress in Mathematics}.
\newblock Birkh\"{a}user Boston, Inc., Boston, MA, second edition, 1996.
\newblock ISBN 0-8176-3836-9.

\bibitem[Viro et~al.(2008)Viro, Ivanov, Netsvetaev, and Kharlamov]{VINK2008}
O.~Ya. Viro, O.~A. Ivanov, N.~Yu. Netsvetaev, and V.~M. Kharlamov.
\newblock \emph{Elementary topology}.
\newblock American Mathematical Society, Providence, RI, 2008.
\newblock ISBN 978-0-8218-4506-6.
\newblock \doi{10.1090/mbk/054}.
\newblock Problem textbook.

\bibitem[Wachs(2007)]{WachsPosetTop2007}
Michelle~L. Wachs.
\newblock Poset topology: tools and applications.
\newblock In \emph{Geometric combinatorics}, volume~13 of \emph{IAS/Park City
  Math. Ser.}, pages 497--615. Amer. Math. Soc., Providence, RI, 2007.
\newblock \doi{10.1090/pcms/013/09}.

\bibitem[West(1996)]{WestGraphTheory1996}
Douglas~B. West.
\newblock \emph{Introduction to graph theory}.
\newblock Prentice Hall, Inc., Upper Saddle River, NJ, 1996.
\newblock ISBN 0-13-227828-6.

\end{thebibliography}

\end{document}